\DeclareMathAlphabet{\mathbbm}{U}{bbm}{m}{n}
\definecolor{CadetBlue}{cmyk}{0.62, 0.57, 0.23, 0 }
\definecolor{black}{cmyk}{1, 0.5, 0, 0 }
\definecolor{RedViolet}{cmyk}{0.07, 0.9, 0, 0.34 }
\definecolor{SeaGreen}{cmyk}{0.69, 0, 0.5, 0}
\DeclareMathAlphabet{\mathpzc}{OT1}{pzc}{m}{it}
\newcommand{\R}{\mathbb R}
\newcommand{\C}{\mathbb C}
\newcommand{\D}{\mathbb D}
\newcommand{\E}{\mathbb E}
\newcommand{\F}{\mathbb F}
\newcommand{\N}{\mathbb N}
\newcommand{\PR}{\mathbb P}
\newcommand{\Q}{\mathbb Q}
\newcommand{\Z}{\mathbb Z}
\newcommand{\T}{\mathbb T}
\newcommand{\SI}{\mathbb S}
\newtheorem{theo}{Th\'{e}or\`{e}me}
\newtheorem{lemm}{Lemme}
\newtheorem{prop}{Proposition}
\newtheorem{coro}{Corollaire}
\newtheorem*{conj}{Conjecture}
\newtheorem*{ques}{Question}
\theoremstyle{definition}
\newtheorem{defi}{Definition}
\theoremstyle{remark}
\newtheorem{exam}{Exemple}
\newtheorem{note}{Note}
\newcommand{\bast}{{}^{\ast}\!}
\newcommand{\bbull}{{}^{\bullet}}
\title[]{Th\'{e}orie Quasicristalline des Nombres: \\  Recherche d'une Th\'{e}orie de Drinfeld-Hayes en Charact\'{e}ristique Z\'{e}ro}
\author{T.M. Gendron}
\address{Instituto de Matem\'{a}ticas, Unidad Cuernavaca, UNAM, Av. Universidad s/n, 
Lomas de Chamilpa, Cuernavaca, Morelos, CP62180, M\'{E}XICO}
\email{tim@matcuer.unam.mx}
\author{E. Leichtnam}
\address{Institut Math\'{e}matique de Jussieu et CNRS, Universit\'{e} Paris Diderot, 
B\^{a}timent Sophie Germain, 
8 Place Aurélie Nemours, Paris 75013, FRANCE}
\email{eric.leichtnam@imj-prg.fr}
\author{P. Lochak}
\address{Institut Mathématique de Jussieu et CNRS,
Université Pierre et Marie Curie,
4~place Jussieu,
Paris 75005, FRANCE }
\email{pierre.lochak@imj-prg.fr}
\subjclass[2010]{}
\keywords{}
\keywords{}
\begin{document}
\vspace{2cm}
\maketitle
\begin{abstract}
Cet article développe la structure nécessaire à la formulation d'une version de la théorie de Drinfeld-Hayes en caractéristique nulle, en utilisant la théorie liée à l'arithmétique des anneaux quasicristallins attachés aux corps de nombres.
(On prendra garde que l'adjectif se
réfère aux quasi-cristaux au sens d'Yves Meyer \cite{Meyer}, sans rapport avec la théorie cristalline initiée par A.~Grothendieck.)
\end{abstract}
\tableofcontents

\section*{Introduction}  
Rassembler sous la dénomination commune de {\it corps globaux} les corps de nombres (alias les extensions finies de $\Q$) 
et les corps de fonctions (alias les extensions finies de ${\bf Q}:=\F_{q}(T)$) a sans doute constitué l'un des gestes les plus
productifs en théorie des nombres. Cette philosophie,  promue par A.Weil dans son ouvrage si influent (\cite{Weil}), 
a largement \'{e}tabli l'habitude de prouver d'abord les conjectures pour les corps de fonctions (donc en caractéristique 
positive), en utilisant les structures supplémentaires attachées à ces derniers, à savoir d'abord :
\begin{enumerate}
\item La {\it géometrie des courbes} que l'on trouve comme en coulisses dans les extensions de ${\bf Q}$ 
mais pas pour les corps de nombres ;
\item l'{\it arithm\'{e}tique de rang $1$}, c'est-\`{a}-dire l'existence d'anneaux de Dedekind avec groupe d'unit\'{e}s fini, 
qui d\'{e}coule du caractère non archimédien de la somme par rapport aux valuations à l'infini, tandis que les valuations 
à l'infini des extensions de $\Q$ sont elles archimédiennes.
\end{enumerate}
L'aspect géométrique des extensions de ${\bf Q}$ \'{e}tait utilis\'{e} par Weil pour d\'{e}montrer l'hypoth\`{e}se de Riemann \cite{Weil1} dans ce cas, ce qui l'a conduit à la formulation des fameuses conjectures, dites de Weil \cite{Weil2}. 
Les preuves de Weil et de Deligne conti\-nuent \`{a} inspirer certaines approches du cas ``classique'', comme le montrent 
les travaux d'A. Connes \cite{Connes} et C. Deninger \cite{Deninger}.  Il s'agit en particulier de chercher  l'analogue 
d'un morphisme \'{e}tale de courbes qui induirait une extension finie $K/\Q$.  

D'autre part l'arithm\'{e}tique des anneaux de Dedekind de rang $1$ a été synth\'{e}tis\'{e}e par V. Drinfeld \cite{Drinfeld} 
et D. Hayes \cite{Hayes} dans une th\'{e}orie du corps de classes en carac\-téristique positive qui combine des idées 
venues de la th\'{e}orie de la multiplication complexe et du th\'{e}or\`{e}me de Kronecker-Weber.  Plus g\'{e}n\'{e}ralement, 
V. Drinfeld \cite{Drinfeld2} a appliqué sa th\'{e}orie à la résolution d'un cas de la réciprocit\'{e} de Langlands en caractéristique positive. 

Ce qui précède rend très naturel le projet de chercher une version de la th\'{e}orie de Drinfeld-Hayes 
pour les corps de nombres. Une piste \'{e}tait fournie dans les articles \cite{Ge-C} et \cite{DGIII} qui concernent 
l'\'{e}tude de l'{\it invariant modulaire quantique}
\[ j^{\rm qt}: \R/{\rm GL}(2,\Z )\multimap \R\cup \{ \infty\},\]
une fonction {\it multivalu\'{e}e}, discontinue et invariante par rapport \`{a} l'action projective de ${\rm GL}(2,\Z )$ sur $\R$.  
La d\'{e}finition de $j^{\rm}(\uptheta)$ utilise les approximations
diophantiennes de $\uptheta$. 

Il existe aussi une definition de $j^{\rm qt}$ en caract\'{e}ristique positive
\[{\bf j}^{\rm qt}: {\bf R}/{\rm GL}(2,{\bf Z} )\multimap {\bf R}\cup \{ \infty\},\]
o\`{u} ${\bf R}={\bf Q}_{\infty}$ est l'analogue des nombres r\'{e}els et ${\bf Z}=\F_{q}[T]\subset {\bf Q}$ est l'analogue des entiers rationnels.  Dans ce cas, pour $f\in {\bf R}$ une unité
fondamentale
quadratique sur ${\bf Q}$, $j^{\rm qt}(f)$ est {\it fini},
et on peut identifier 
\begin{align}\label{ffjexplicit} {\bf j}^{\rm qt}(f)
=\{ j(\mathfrak{a}_{i})|\;  \mathfrak{a}_{i}\subset {\bf A}_{\infty_{1}} ,\; i=1,\dots ,d\}, \end{align} 
o\`{u} (voir \cite{DGIII} pour plus de précisions)
\begin{itemize}
\item ${\bf A}_{\infty_{1}}$ est un sous-anneau de Dedekind de ${\bf O}_{\bf K}$ = l'anneau des entiers 
de ${\bf K}={\bf Q}(f)$, ``petit'' au sens
que ${\bf A}^{\times}_{\infty_{1}}=\F_{q}^{\times}$ ;
\item Les $\mathfrak{a}_{i}\subset {\bf A}_{\infty_{1}}$ sont des id\'{e}aux ;
\item $j:{\sf Cl}_{{\bf A}_{\infty_{1}}}\longrightarrow {\bf R}$ est un invariant modulaire des classes d'id\'{e}aux 
de ${\bf A}_{\infty_{1}}$.
\end{itemize}  

Si l'on note ${\bf H}_{{\bf O}_{\bf K}}$ le corps de classes associ\'{e} \`{a} ${\bf O}_{\bf K}$,
le th\'{e}or\`{e}me principal de \cite{DGIII} dit que
\[ {\bf H}_{{\bf O}_{\bf K}}= {\bf K}(\prod_{\upalpha\in {\bf j}^{\rm qt}(f)} \upalpha )= {\bf K}(\prod_{i=1}^{d}  j(\mathfrak{a}_{i})) .\] 
La preuve utilise la th\'{e}orie de Drinfeld-Hayes, autrement dit une th\'{e}orie du corps de classes bas\'{e}e 
sur le ``petit'' anneau $ {\bf A}_{\infty_{1}}$ (\cite{Drinfeld}, \cite{Hayes}).  On a la conjecture suivante:
 
\begin{conj}[Demangos-Gendron \cite{DGIII}]
Soient $\uptheta\in\R\setminus \Q$ une unité fondamentale quadratique et $K=\Q (\uptheta )$ le corps quadratique associé,
de discriminant $D$.   Alors,
$j^{\rm qt}(\uptheta )$ est un ensemble de Cantor autosimilaire d'ordre $D$ et
\[ H_{K}= K({\sf N}^{\rm avg}(j^{\rm qt}(\uptheta )))\]
où  $H_{K}$ est le corps de classes de Hilbert de $K$ et ${\sf N}^{\rm avg}(j^{\rm qt}(\uptheta ))$ 
est un produit pond\'er\'e des éléments de $j^{\rm qt}(\uptheta )$.
\end{conj}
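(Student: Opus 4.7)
Esquissons le plan de démonstration qui structurera le reste de l'article. L'idée directrice est de transposer en caractéristique zéro l'argument de \cite{DGIII}, où la formule analogue est établie en caractéristique positive via la théorie de Drinfeld-Hayes appliquée au ``petit'' sous-anneau de Dedekind ${\bf A}_{\infty_{1}}$. L'obstacle immédiat est qu'aucun tel sous-anneau n'existe en caractéristique zéro, le groupe des unités de tout ordre de $O_{K}$ étant infini pour $K$ quadratique réel. Notre substitut est la notion, introduite dans ce travail, de \emph{module de Drinfeld quasicristallin}, défini sur un quasi-cristal de Meyer $\Lambda_{\uptheta}\subset\R$ construit à partir du développement en fraction continue de $\uptheta$ ; ce quasi-cristal joue, à une commensurabilité près, le rôle joué par ${\bf A}_{\infty_{1}}$ dans le cas fonctionnel.

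On commencera par associer à $\uptheta$ un module de Drinfeld quasicristallin canonique $\upphi_{\uptheta}$, dont les classes d'isomorphisme forment un groupe fini ${\sf Cl}^{\rm qt}(\uptheta)$ canoniquement isomorphe au groupe des classes d'idéaux de $O_{K}$. On établira ensuite une version quasicristalline de la formule (\ref{ffjexplicit}), identifiant (multivalentement)
\[ j^{\rm qt}(\uptheta) = \{\, j(\mathfrak{a}_{i}) \mid [\mathfrak{a}_{i}]\in {\sf Cl}^{\rm qt}(\uptheta)\,\}. \]
La première assertion de la conjecture (structure de Cantor autosimilaire d'ordre $D$) se déduira alors de la dynamique renormalisante de l'application de Gauss agissant sur le développement périodique de $\uptheta$, dont les branches contractantes se comptent en termes du discriminant $D$.

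Pour la seconde assertion, on développera la théorie du corps de classes pour nos modules quasicristallins, par analogie étroite avec Drinfeld-Hayes : construction des points de torsion, étude de l'action galoisienne, et reconstruction de $H_{K}$ comme corps engendré par un invariant adéquat de $\upphi_{\uptheta}$. La moyenne ${\sf N}^{\rm avg}$ apparaîtra comme un produit des $j(\mathfrak{a}_{i})$ pondéré par la mesure de Meyer naturelle portée par $\Lambda_{\uptheta}$, la pondération étant calibrée pour neutraliser la multivaluation de $j^{\rm qt}$. L'application du théorème de réciprocité quasicristallin à cette quantité livrera alors l'égalité $H_{K}=K({\sf N}^{\rm avg}(j^{\rm qt}(\uptheta)))$.

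L'obstacle principal sera la preuve de l'algébricité de ${\sf N}^{\rm avg}$ sur $K$ et son identification avec un générateur de $H_{K}$. Contrairement au cas des corps de fonctions, où chaque $j(\mathfrak{a}_{i})$ est univalué, ici chaque valeur parcourt elle-même un ensemble de Cantor, ce qui oblige à remplacer les produits finis par des intégrales sur le quasi-cristal et à contrôler la convergence de ces dernières. Il faudra exploiter finement la structure apériodique mais ``presque périodique'' des quasi-cristaux de Meyer, seule à même, en l'absence du caractère non archimédien disponible en caractéristique positive, de fournir les propriétés de finitude et d'uniformité requises pour transporter les arguments de Drinfeld-Hayes.
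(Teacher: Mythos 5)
Le fait essentiel est que l'énoncé en question est une \emph{conjecture} (Demangos--Gendron) : l'article ne la démontre pas et ne prétend pas la démontrer. Il n'en établit qu'un fragment très partiel, à savoir que $j^{\rm qt}(\uptheta)$ est l'image continue d'un ensemble de Cantor (Corollaire \ref{jqtestCantor}), via le théorème de Pink $j^{\rm qt}(\uptheta)=\{ j(\mathfrak{a})\,|\, \mathfrak{a}\in{\sf Z}\}$ et la continuité de $j$ sur ${\sf Cl}^{\sf mod}(A)$; même le passage de «~image continue d'un Cantor~» à «~Cantor~» dépend de l'injectivité \emph{conjecturale} de $j$. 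Votre texte n'est donc pas une preuve mais un programme, et il ne peut être accepté comme démonstration de la conjecture.

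De plus, plusieurs maillons de votre plan contredisent ce que l'article établit réellement ou reposent sur des outils qui n'existent pas. D'abord, vous affirmez que les classes d'isomorphisme de modules quasicristallins forment un \emph{groupe fini} canoniquement isomorphe à ${\sf Cl}(\mathcal{O}_K)$ et que $j^{\rm qt}(\uptheta)$ est l'ensemble \emph{fini} des $j(\mathfrak{a}_i)$ correspondants : c'est faux dans le cadre de l'article, où ${\sf Cl}^{\sf mod}(A)$ est un mono\"{\i}de de Cantor (non un groupe), l'application vers ${\sf Cl}(K)$ étant surjective de noyau ${\sf Z}$ lui-même de Cantor, et où $j^{\rm qt}(\uptheta)$ est paramétré par un continuum ${\bf x}\in[0,1)$ de quasicristaux $\mathfrak{a}_x$, $\mathfrak{a}_x^{+}$ — c'est précisément pour cela que la conjecture fait intervenir un produit pondéré ${\sf N}^{\rm avg}$ et non un produit fini comme en caractéristique positive. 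Ensuite, le «~théorème de réciprocité quasicristallin~», l'action galoisienne sur les points de torsion et l'algébricité de ${\sf N}^{\rm avg}$ sur $K$ ne sont nulle part disponibles : l'article montre au contraire que la structure de module ($\otimes_{\mathfrak{a}}$, racines $\upalpha$-ièmes) est éventuellement multivoque, que l'exponentielle $e_{\mathfrak{a}}$ n'est pas injective, et laisse ouverte jusqu'à l'injectivité de $j$; invoquer ces résultats comme étapes revient à supposer l'essentiel de ce qu'il faudrait prouver. Enfin, l'autosimilarité «~d'ordre $D$~» déduite de la dynamique de l'application de Gauss n'est qu'esquissée et ne figure pas non plus dans l'article.
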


Dans cet article, on démontre que $j^{\rm qt}(\uptheta )$ est l'image continue d'un ensemble de Cantor ;
en particulier il est soit de Cantor, soit fini (voir  Corollaire \ref{jqtestCantor}).
\vskip .2cm

En vue de prouver l'analogue en caract\'{e}ristique nulle des r\'esultats  r\'{e}sum\'es plus haut 
il est d\'{e}sirable de disposer de l'\'{e}quivalent de la th\'{e}orie de Drinfeld-Hayes dans ce cas.
On constate imm\'{e}diatement que le premier pas consiste à trouver un anneau analogue $A_{\upsigma_{1}}$ 
à ${\bf A}_{\infty_{1}}$ en ca\-ract\'{e}ristique nulle, o\`{u} $\upsigma_{1}:K\longrightarrow\C$ est un plongement de $K/\Q$. 

Dans le cas où $K=\Q(\upvarphi )$, avec $\upvarphi$ le nombre d'or, une description explicite de $A_{\upsigma_{1}}$ 
\'{e}tait implicite dans les calculs de renormalisation donn\'{e}s en  \cite{Ge-C} (voir \S\S 2,3).   
Richard Pink (\cite{Pink}) a par la suite suggéré la définition élégante et transparente suivante : 
soit $K/\Q$ une extension quadratique réelle, $\upsigma_{i}$, $i=1,2$, les deux plongements de $K$ dans $\R$.  
On fixe $\upsigma_{1}$ en identifiant $K$ avec $\upsigma_{1}(K)$.  Soit
 \[  A_{\upsigma_{1}} = \{ \upalpha\in \mathcal{O}_{K} |\; |\upalpha' |\leq 1 \} ,\quad \upalpha' = \upsigma_{2}(\upalpha ) .\]
 Si l'on \'{e}crit $K=\Q (\uptheta )$ o\`{u} $\uptheta$ est une unit\'{e} fondamentale et que l'on d\'{e}finit 
 les semigroupes multiplicatifs 
\[   \mathfrak{a}_{x} =\{ \upalpha\in \mathcal{O}_{K}|\;   |\upalpha'|< \uptheta^{-x}\} \subset  \mathfrak{a}^{+}_{x} =\{ \upalpha\in \mathcal{O}_{K}|\;   |\upalpha'|\leq \uptheta^{-x}\} , \]
Pink a demontr\'{e} que 
\[ j^{\rm qt}(\uptheta ) =  \bigcup_{x\in [0,1)} \{  j(\mathfrak{a}_{x}),  j(\mathfrak{a}^{+}_{x}) \},\]
o\`{u} $j$ est un analogue de l'invariant modulaire (voir \S \ref{modinvsec}).
Cette formule est l'analogue exacte de (\ref{ffjexplicit}): en fait, les id\'{e}aux $\mathfrak{a}_{i}$ qui 
apparaissent dans  (\ref{ffjexplicit}) s'écrivent explicitement
\[\mathfrak{a}_{i}=\{ g\in {\bf K}|\; |g'|_{\infty_{1}} <q^{-i}\} . \] 
Pink a observé également que les semigroupes $\mathfrak{a}_{x}$ sont des {\it quasicristaux} (et même des {\it ensembles mod\`{e}les}). Cependant il leur manque pour constituer de véritables id\'{e}aux d'être stables par rapport \`{a} la somme.  
Dans cet article, partant de l'obser\-vation de R. Pink, nous explorons la possibilit\'{e} de d\'{e}velopper 
une th\'{e}orie de Drinfeld-Hayes bas\'{e}e sur l'anneau quasicristallin $A_{\upsigma_{1}}$.

Nous commençons au \S \ref{DrinfeldHayes} avec une r\'{e}capitulation de la th\'{e}orie de Drinfeld-Hayes.  
Au \S \ref{anneauxqc} nous donnons la d\'{e}finition g\'{e}n\'{e}rale d'un anneau quasicristallin arithmétique 
puis discutons au \S 3 le monoïde des classes d'id\'{e}aux ${\sf Cl}^{\sf qc}_{A_{\upsigma_{1}}}$.
Nous définissons ensuite (\S \ref{zeta}) la fonction z\^{e}ta $\upzeta_{\mathfrak{a}}$ associ\'{e} \`{a} 
un id\'{e}al quasicristallin et d\'{e}montrons l'existence d'un prolongement méromorphe à tout le plan complexe.  
Au \S \ref{modinvsec} on utilise  la fonction z\^{e}ta $\upzeta_{\mathfrak{a}}$  pour d\'{e}finir l'invariante modulaire 
$j:{\sf Cl}^{\sf qc}_{A_{\upsigma_{1}}}\rightarrow \R$ et montrer sa continuit\'{e} par rapport \`{a} la topologie
cantorienne de ${\sf Cl}^{\sf qc}_{A_{\upsigma_{1}}}$. Nous commençons ensuite (\S \ref{solenoid}) 
un développement plus g\'{e}om\'{e}trique en introduisant pour chaque idéal quasicristallin un soléno\"{\i}de 
associé qui repr\'{e}sente essentiellement une d\'{e}finition analytique d'un module de Drinfeld quasicristallin.
Enfin au \S \ref{exp} on introduit les fonctions trigonom\'{e}triques quasicristallines qui découlent de formules 
du produit, ainsi que l'exponentielle associ\'{e}e, dans le cadre de ces modules quasicristallins.

\vspace{3mm}
 
 \noindent {\em Remerciements}. Le premier auteur voudrait exprimer sa gratitude \`{a} 1) La Fondation des Sciences 
 Math\'{e}matiques de Paris pour le financement de son s\'{e}jour \`{a} Paris à l'automne 2017 ainsi que 
 l'\'{e}quipe d'alg\`{e}bres d'operateurs de l'Institut de Math\'{e}matiques de Jussieu -- Paris Rive Gauche pour 
 son hospitalit\'{e} et ses encoura\-gements et 2) Boris Zilber et Merton College de l'Universit\'{e} de Oxford pour
 le financement de son s\'{e}jour \`{a} Oxford au printemps 2018.  Il a également b\'{e}n\'{e}fici\'{e}
 de conversations avec Paul Nelson, Ryszard Nest, Richard Pink (qui a fait des commentaires tr\`{e}s utiles sur une version ant\'{e}rieure de l'article), Georges Skandalis et Boris Zilber.

 \section{Bref r\'{e}sum\'{e} de la th\'{e}orie de Drinfeld-Hayes}\label{DrinfeldHayes}  La th\'{e}orie quasicristalline d\'{e}velopp\'{e}e ici est bas\'{e}e sur la th\'{e}orie des modules de Drinfeld-Hayes de rang 1, present\'{e}e par exemple dans \cite{Hayes}, \cite{Goss}, \cite{Thakur} et que nous résumons ci-dessous.
  
 Soient ${\bf Q}=\F_{q}(T)\supset {\bf Z}:=\F_{q}[T]$ le corps des fonctions rationnelles à coefficients dans le corps fini $\F_{q}$ ($q=p^{k}$, $p$ un nombre premier)
 avec son sous-anneau des polyn\^{o}mes.  Alors ${\bf Q}$ est le corps de fonctions 
 de la droite projective $\PR^{1}=\overline{\bf Q}\cup \{ \infty\}$ et 
 ${\bf Z}$ l'anneau des fonctions reguli\`{e}res sur $\PR^{1} - \{ \infty\}$.   Soit ${\bf R}:={\bf Q}_{\infty}$ 
 le complet\'{e} de ${\bf Q}$ à la place $v_{\infty}$ : $v_{\infty}(f)= {\rm deg}_{T^{-1}}(f) = - {\rm deg}_{T}(f)$ et
 $|f|=q^{-v_{\infty}(f)}$. 
 On identifie ${\bf R}$ à  $\F_{q}[[T^{-1}]]$, le corps des séries de Laurent
 en la variable $T^{-1}$.  On note enfin ${\bf C}=(\overline{\bf R})_{\infty}$, qui est un 
 corps complet et alg\'{e}briquement clos.
 
 On dit qu'une extension ${\bf K}/{\bf Q}$ est {\it g\'eom\'{e}trique} si le corps des constantes ${\bf K}\cap\overline{\F_{q}}$ 
 de ${\bf K}$ est aussi $\F_{q}$.  Dans ce cas il existe un morphisme $\uppi:\Upsigma_{\bf K}\rightarrow \PR^{1}$ de courbes
  d\'{e}fines
 sur $\F_{q}$  qui induit cette extension.  Toutes les extensions considér\'{e}es ici sont  géom\'{e}triques. 
 \`{A} une telle extension on associe la cl\^{o}ture int\'{e}grale ${\bf O}_{\bf K}$ de ${\bf Z}$ dans ${\bf K}$ ;  
 on a \[{\bf O}_{\bf K} = {\rm Reg}(\Upsigma_{\bf K}\setminus\uppi^{-1}(\infty)),\]
 o\`{u} ${\rm Reg}$ dénote les fonctions r\'{e}guli\`{e}res.  Si $\infty_{1}\in \uppi^{-1}(\infty)$ on peut d\'{e}finir 
 \[ {\bf A}_{\infty_{1}} =  {\rm Reg}(\Upsigma_{\bf K}\setminus \{ \infty_{1}\}) \subset {\bf O}_{\bf K},\]
 qui est un anneau de Dedekind ; ${\bf A}_{\infty_{1}}$ est ``petit'' au sens que ${\bf A}_{\infty_{1}}^{\times}$ est fini, ce 
 qui implique l'existence des places ${\bf A}_{\infty_{1}}\hookrightarrow {\bf C}$ d'images {\it discrètes}.  
 \`{A} ces deux anneaux de Dedekind ${\bf O}_{\bf K}$ et  ${\bf A}_{\infty_{1}}$ on peut associer
 les corps de classes de Hilbert au sens de Rosen (\cite{Rosen})
\begin{diagram} 
{\bf H}_{{\bf A}_{\infty_{1}}} \\
\dLine \\
{\bf H}_{{\bf O}_{\bf K}} \\
\dLine \\
{\bf K}
 \end{diagram}
dont les groupes de Galois sont isomorphes aux groupe des classes id\'{e}aux corres\-pondants: 
\[ {\rm Gal}({\bf H}_{{\bf A}_{\infty_{1}}} /{\bf K})\cong {\sf Cl}_{{\bf A}_{\infty_{1}} }\quad \text{et} \quad 
{\rm Gal}({\bf H}_{{\bf O}_{\bf K}} /{\bf K})\cong {\sf Cl}_{{\bf O}_{\bf K} } .\]

\begin{exam}
Le cas de ${\bf K}\subset {\bf R}$ extension de degr\'{e} $2$ sur ${\bf Q}$, induite par le morphisme de degr\'{e} $2$ $\uppi:\Upsigma_{\bf K}\rightarrow \PR^{1}$, est particulièrement intéressant.  On a deux préimages de $\infty\in \PR^{1}$, soit
 $\uppi^{-1}(\infty)=\{ \infty_{1},\infty_{2}\}$.  Dans ce contexte
 \[{\bf A}_{\infty_{1}}  = \{ g\in {\bf K}|\; |g'|_{\infty_{1}}\leq 1\},\] 
 o\`{u} $g'$ est la conjugu\'{e}e de Galois de $g$ et  $|g'|_{\infty_{1}}=q^{-v_{\infty_{1}}(g')}$. 
 Fixons une unit\'{e} fondamentale $f\in {\bf O}_{\bf K}^{\times}$ tel que $|f|_{\infty_{1}}=q^{d}>1$. 
 Suivant \cite{DGIII} on a l'identification
\[ {\bf A}_{\infty_{1}} = \F_{q}[f,fT,\dots ,fT^{d-1}] \subset {\bf O}_{\bf K}=\F_{q}[f,T].\]
Dans cette même référence on a également démontré (cf. Theorem 4 de \cite{DGIII}) que :
 \[ {\rm Gal}({\bf H}_{{\bf A}_{\infty_{1}}} / {\bf H}_{{\bf O}_{\bf K}} )\cong {\sf Z}:=\{ \mathfrak{a}_{0},\dots ,\mathfrak{a}_{d-1}\} \subset {\sf Cl}_{{\bf A}_{\infty_{1}} } \]
 o\`{u} 
 \[ \mathfrak{a}_{i}=(f,fT,\dots ,fT^{i})= \{ g\in {\bf A}_{\infty_{1}} |\; |g'|_{\infty_{1}}\leq q^{i-d} \} \subset  {\bf A}_{\infty_{1}}. \]
\end{exam}

\`{A} tout id\'{e}al fractionaire de ${\bf A}_{\infty_{1}}$ on associe 
un ${\bf A}_{\infty_{1}}$-module de rang 1 comme suit.  On d\'{e}finit d'abord
l'exponentielle associ\'{e} \`{a} $\mathfrak{a}$
\[ \exp_{\mathfrak{a}}(z) = z\prod_{0\not=\upalpha\in\mathfrak{a}} \left( 1-\frac{z}{\upalpha}\right) :\]
qui est un épimorphisme {\it additif} $({\bf C},+)\rightarrow ({\bf C},+)$ de noyau $\mathfrak{a}$.  
Pour tout $\upalpha\in {\bf A}_{\infty_{1}}$ il existe alors (\cite{Goss}) un polyn\^{o}me {\it additif }\footnote{C'est-à-dire 
de la forme
$f(X)=\sum c_{i}\uptau^{i}$ o\`{u} $\uptau (X)=X^{q}$ est le morphisme de Frobenius.} 
 $\uprho_{\upalpha}(X)$  tel que le diagramme ci-dessous soit commutatif:
\begin{diagram}
{\bf C}/\mathfrak{a} & \rTo^{\upalpha\cdot} &{\bf C}/\mathfrak{a}  \\
\dTo^{\exp_{\mathfrak{a}}}_{\cong} & & \dTo_{\exp_{\mathfrak{a}}}^{\cong}\\
{\bf C} &\rTo_{\uprho_{\upalpha}} & {\bf C}
\end{diagram}
Le {\em module de Drinfeld} (de rang $1$) associ\'{e} \`{a} $\mathfrak{a}$, soit $\D_{\mathfrak{a}}=({\bf C},\uprho)$, 
est défini alors comme ${\bf C}$ muni de la structure de ${\bf A}_{\infty_{1}}$-module d\'{e}finie par les $\uprho_{\upalpha}$.
En tant que  ${\bf A}_{\infty_{1}}$-module, on a
\[ \D_{\mathfrak{a}} \cong {\bf C}/\mathfrak{a}. \]

Il existe une normalisation naturelle de $\D_{\mathfrak{a}}$ par $\upxi_{\mathfrak{a}}\in {\bf C}$ qui joue un r\^{o}le analogue \`{a} la normalisation de $\Z$ par $\uppi i\in\C$ dans la th\'{e}orie de
l'exponentielle classique.  On
remplace le r\'{e}seau $\mathfrak{a}\subset {\bf C}$ par $\Uplambda_{\mathfrak{a}}:= \upxi_{\mathfrak{a}}\mathfrak{a}$ et  on consid\`{e}re l'exponentielle correspondante 
\[ e_{\mathfrak{a}}(z):= \exp_{\Uplambda_{\mathfrak{a}}} (z)=\upxi_{\mathfrak{a}} \exp_{\mathfrak{a}}(\upxi_{\mathfrak{a}}^{-1}z).\]
On choisit $\upxi_{\mathfrak{a}}$ de manière à obtenir un module de Drinfeld $\widetilde{\D}_{\mathfrak{a}}=({\bf C},\tilde{\uprho})$  conjugu\'{e} \`{a} $\D_{\mathfrak{a}}$ et {\it normalis\'{e} par rapport au signe} (voir au \S 12 de \cite{Hayes}).  Dans ce cas, pour chaque $\upalpha\in {\bf A}_{\infty_{1}}$ 
les coefficients de $\tilde{\uprho}_{\upalpha}$ appartiennent au corps de classes de Hilbert ${\bf H}_{{\bf A}_{\infty_{1}}}$ 
et de plus (voir \cite{Hayes} et \cite{Thakur} Theorem 3.4.2) Hayes a montr\'{e} qu'ils engendrent ce corps :
\[  {\bf H}_{{\bf A}_{\infty_{1}}} = {\bf K}(\text{coefficients de $\tilde{\uprho}_{\upalpha}$}).\]
Le module $\widetilde{\D}_{\mathfrak{a}}$ s'appelle {\em module de Hayes}.  Il a permis à ce dernier de développer 
une th\'{e}orie explicite du corps de classes bas\'{e}e sur le ``petit'' anneau de Dedekind $ {\bf A}_{\infty_{1}}$
en utilisant les modules $\tilde{\D}_{\mathfrak{a}}$. 

 Le but de cet article est de proposer un candidat pour représenter
un analogue des modules de Hayes pour les anneaux d'entiers des corps de nombres.  En particulier, on définira les analogues 
des objets suivants :
\begin{itemize}
\item[-] le petit anneau  ${\bf A}_{\infty_{1}}$ : ce sera un {\it anneau quasicristallin PVS (Pisot-Vijayaraghavan et Salem)}, voir \S \ref{anneauxqc} ;
\item[-] un idéal $\mathfrak{a}$ de ${\bf A}_{\infty_{1}}$, appelé ci-dessous {\it idéal quasicristallin}, voir \S \ref{anneauxqc} ;
\item[-] le groupe de classes d'idéaux ${\sf Cl}_{{\bf A}_{\infty_{1}}}$, ici le {\it monoïde des classes des idéaux quasicristallins}, voir \S \ref{monoides} ;
\item[-] le quotient ${\bf C}/\mathfrak{a}$, noté $\hat{\SI}_{\mathfrak{a}}$, alias ici le {\it solénoïde quasicristallin} associé à $\mathfrak{a}$, voir \S \ref{solenoid} ;
\item[-] l'exponentielle $\exp_{\mathfrak{a}}$,  voir \S \ref{exp} ;
\item[-] le module de Drinfeld $\D_{\mathfrak{a}}$, devenu le {\it module de Drinfeld quasicristallin} et noté 
\[ \E_{\mathfrak{a}}\subset\C^{\ast}.\]  
Sa structure de ``module'' provient d'applications multivaluées
\[\uprho_{\upalpha}: \D_{\mathfrak{a}}\longrightarrow \D_{\mathfrak{a}}, \] induisant des fonctions bien
définies sur le cercle $\SI^{1}=\C^{\ast}/\R^{\ast}_{+}$.
\end{itemize}

 \section{Anneaux et id\'{e}aux quasicristallins}\label{anneauxqc}
 
 Soit $X\subset\R^{n}$.  On dit que $X$ est un {\it ensemble de Delaunay} s'il est
 \begin{enumerate}
 \item[1.] {\it uniform\'{e}ment discret}: il existe $r>0$ tel que pour tout $x\in X$, $B_{r}(x)\cap X=\{ x\}$, o\`{u} $B_{r}(x)$ est le boule ouvert de radius $r$ centr\'{e} en $x$.
 \item[2.]  {\it relativemente dense}: il existe $R>r>0$ tel que pour tout $v\in \R^{n}$, $B_{R}(v)\cap X$ est non nulle.
 \end{enumerate}
  Un {\em quasicristal}  (au sens d'Yves Meyer \cite{Meyer}) est un ensemble de Delaunay 
   $\Uplambda\subset\R^{n}$
   qui est un {\em presque r\'{e}seau} :
il existe un ensemble fini $F\subset \R^{n}$ tel que
 \[  \Uplambda -\Uplambda \subset \Uplambda +F.\]

\begin{exam}[Ensembles Modèles]\label{ensmod} Soit $\R^{N}=V_{1}\oplus V_{2}$ une d\'{e}composition en somme directe de deux sous-espaces vectoriels, $\uppi_{i}$ les projections sur $V_{i}$, $i=1,2$.  Soient $\Upgamma\subset\R^{N}$ un r\'{e}seau
et $D\subset V_{2}$ un ensemble relativement compact (appel\'{e} {\em fen\^{e}tre}).
L'{\em ensemble mod\`{e}le} associ\'{e} est défini par
\[  \mathcal{M}= \mathcal{M}( \Upgamma, D)= \mathcal{M} (V_{1},V_{2}, \Upgamma, D) = \{ \uppi_{1}(v)|\; v\in \Upgamma, \uppi_{2}(v)\in D\}\subset V_{1}. \]
Normalmente on prendra $N=n+m$, $V_{1}=\R^{n}$ et $V_{2}=\R^{m}$, afin d'avoir $\mathcal{M}\subset\R^{n}$.
Plus g\'{e}n\'{e}ralement, on peut remplacer $\R^{N}$ par $\R^{n}\oplus H$ o\`{u} $H$ est un groupe localement compact.  En prenant $\Upgamma\subset
\R^{n}\oplus H$ un r\'{e}seau et $D\subset H$ relativement compact,  on
d\'{e}finit de même $ \mathcal{M}= \mathcal{M} (\R^{n},H, \Upgamma, D)$.
Dans les deux cas, il en r\'esulte que $\mathcal{M}$ est un quasicristal (\cite{Meyer}, \cite{Moody}).  Tout quasicristal est contenu dans un ensemble mod\`{e}le, mais  il
existe des quasicristaux qui ne sont pas des ensembles mod\`{e}les (\cite{Moody}). 
\end{exam}



\begin{defi}\label{AQdef} Un {\em anneau quasicristallin} (euclidien de rang $n$) est un quasicristal $A\subset \R^{n}$  
qui est un mono\"{\i}de multiplicatif contenant
$\{0, \pm 1\}$.  Un sous-mono\"{\i}de $\mathfrak{a}\subset A$ satisfaisant $A\cdot\mathfrak{a}\subset\mathfrak{a}$ est appel\'e
{\em id\'{e}al quasicristallin} (entier)\footnote{On note (voir \cite{Moody}) qu'un id\'{e}al quasicristallin entier et non zéro est aussi un quasicristal.}.  Plus g\'{e}n\'{e}ralement,
un  id\'{e}al quasicristallin fractionnaire est un quasicristal $\mathfrak{a}\subset\R^{n}$ tel que $A\cdot \mathfrak{a}\subset \mathfrak{a}$.
\end{defi}

\begin{exam} Soient $\Upgamma\subset\R^{n+m}$ un réseau qui est aussi un anneau avec $1$ et $D\subset \R^{m}$ un monoïde multaplicatif relativement compact qui contient $\uppi (1)$ (par exemple, on peut prendre
$D=$ la boule unitaire en $\R^{m}$ ou le produit $[-1,1]\times\cdots\times[-1,1]$).  L'ensemble modèle associé $A=\mathcal{M}(\Upgamma, D)$ est donc un anneau quasicristallin.  Par chaque idéal
$\mathfrak{A}\subset\Upgamma$ et chaque sous-monoïde relativement compact $D'\subset D$, $\mathfrak{a}=\mathcal{M}(\mathfrak{A}, D')$ est un idéal quasicristallin de $A$.
\end{exam}


Dans la suite on se concentre sur le cas des id\'{e}aux quasicristallins associ\'{e}s aux sous mono\"{\i}des multiplicatifs d'un corps de nombres
$K/\Q$ de degré $d>1$.  On note les $d$ plongements $\upsigma:K\hookrightarrow \C$ ainsi que l'espace de Minkowski 
\[ K\hookrightarrow K_{\infty}=\{\boldsymbol z=(z_{\upsigma})\in \C^{d}| \overline{z}_{\upsigma}=z_{\overline{\upsigma}}\}\cong \R^{r}\times \C^{s},\] o\`{u} $r=$ 
le nombre de plongements r\'{e}els, $s=$ le nombre des paires de plongements
complexes et $d=r+2s$.   On fixe un sous ensemble 
\[ \Upsigma=\{\upsigma_{1},\dots ,\upsigma_{k}\},\quad \upsigma_{i}:K\hookrightarrow K_{\upsigma_{i}}= \left\{ \begin{array}{l}
\R \\
\text{\small{ou}} \\
\C \\
\end{array}
\right.\] de plongements tel que 
 si $\upsigma\in\Upsigma$ est complexe, $\overline{\upsigma}\in\Upsigma$.  L'ensemble $\Upsigma$ d\'{e}finit
un sous-espace $K_{\Upsigma}\subset K_{\infty}$ de fa\c{c}on \'{e}vidente et un plongement
 \[ \Upsigma=(\upsigma_{1},\dots ,\upsigma_{k}): K\hookrightarrow K_{\Upsigma}.\]
 Si $\Upsigma'$ est le complement de $\Upsigma$, on a $K_{\infty}=K_{\Upsigma}\oplus K_{\Upsigma'}$.
On dit que $A\subset K$ est un {\em anneau quasicristallin} s'il existe $\Upsigma=(\upsigma_{1},\dots ,\upsigma_{k})$ 
comme ci-dessus et tel que l'image
$\Upsigma(A)\subset K_{\Upsigma}$ est un anneau quasicristallin de $K_{\Upsigma}$.  
On identifiera alors $A$ avec son image $\Upsigma (A)\subset K_{\Upsigma}$.

\begin{exam}\label{ExempleAnneaux} Fixons $\Upsigma$ comme ci-dessus. On peut associer \`{a} chaque $\upsigma\in\Upsigma'$  la fen\^{e}tre $D_{\upsigma}=\{x\in K_{\upsigma}|\; |x|\leq 1\}$.
Alors l'ensemble mod\`{e}le 
\[ A_{\Upsigma}=\mathcal{M} (K_{\Upsigma},K_{\Upsigma'},\mathcal{O}_{K}, D_{\Upsigma}),\]
o\`{u} $D_{\Upsigma}=\prod_{\upsigma\in\Upsigma'} D_{\upsigma}$, d\'{e}finit un anneau quasicristallin $A_{\Upsigma}\subset K_{\Upsigma}$.
Si $\Upsigma=\{ \upsigma\}$ on note \[ A_{\upsigma}  \subset  \left\{ \begin{array}{l}
\R \\
\text{\small{ou}} \\
\C
\end{array}
\right. \] l'anneau quasicristallin associ\'{e}, qu'on dira de {\em rang} 1, r\'{e}el ou complexe selon la nature du
plongement $\upsigma$.  

Si par exemple  $K=\Q (\uptheta )\subset\R$ est une extension quadratique r\'{e}elle de $\Q$,  alors on a deux plongements $\upsigma_{1}={\rm id}$ et $\upsigma_{2}$ de sorte que
\[ A_{\upsigma_{1}} =\{ \upalpha\in \mathcal{O}_{K}|\; |\upalpha'|\leq 1\} ,\quad  A_{\upsigma_{2}} =\{ \upalpha' \in \mathcal{O}_{K}|\; |\upalpha|\leq 1\}, \] o\`{u} $\upalpha'$ dénote 
le conjugu\'{e}e sous Galois.  Dans ce cas, on a
\[ A_{\upsigma_{1}}\cap A_{\upsigma_{2}} =\pm 1, \quad   \langle A_{\upsigma_{i}}\rangle = \mathcal{O}_{K},\;\; i=1,2,\]
 où $\langle X\rangle$ est le groupe engendré par $X$ et la deuxième égalité est consequence du fait que $\mathcal{O}_{K}=\Z[\uptheta]$, pour $\uptheta$ une unité fondamental.
Si $K=\Q (\upomega)$ quadratique complexe, il n'y a qu'une paire de places conjuguées ($\upsigma$  et $\bar \upsigma$)
avec $A_{\upsigma}=\mathcal{O}_{K}$.  
 Si $\Upsigma$ comprend  tous les plongements de $K$, on obtient $A_{\Upsigma}=\mathcal{O}_{K}$. \end{exam}

Soient $\uptheta$ un entier alg\'{e}brique re\'{e}l tel que $\uptheta>1$, $\uptheta^{\upsigma_{i}}$ ses  conjugu\'{e}es, $\upsigma_{i}\not={\rm id}$. On rappelle que $\uptheta$ est un 
{\em nombre de  Pisot-Vijayaraghavan (PV)}  si pour tout~$i$, $|\uptheta^{\upsigma_{i}}|<1$, voir \cite{Pisot}, \cite{Cassels}.  
Si $|\uptheta^{\upsigma_{i}}|\leq 1$ pour tout $i$, avec egalit\'{e} pour au moins une valeur de $i$, 
on dit que $\uptheta$ est un {\em nombre de  Salem (S)} (\cite{Salem}).   

De même un entier alg\'{e}brique complexe $\upomega$ avec $|\upomega|>1$ est un
{\em nombre de  PV complexe} (\cite{BertinZaimi}) si tous ses conjugu\'{e}es $\upomega^{\upsigma_{i}}\not=\overline{\upomega}$ satisfont $|\upomega^{\upsigma_{i}}|< 1$; c'est un {\em nombre de  S complexe} si 
 tous ses conjugu\'{e}es $\upomega^{\upsigma_{i}}\not=\overline{\upomega}$ satisfont $|\upomega^{\upsigma_{i}}|\leq 1$ 
 avec de nouveau \'{e}galit\'{e} pour au moins un $i$.  On note $PV$ l'ensemble de tous les nombres de Pisot-Vijayaraghavan complexes, $S$
 celui des nombres de Salem complexes et $PVS$ l'union de ces deux ensemles ($PVS=PV\cup S$).
Les intersections  ${PV}_{\R} = {PV}\cap\R$, ${S}_{\R} = { S}\cap\R$, $ { PVS}_{\R} = { PVS}\cap\R$, 
consistent en les nombres $PV$, $S$ et $PVS$ classiques ainsi que leurs opposés.  Si $K/\Q$ est finie, on note 
$PV_{K}=PV\cap K$ et de même pour $S$ et $PVS$. On a alors l'énoncé suivant :
 \begin{prop}
Soit $A_{\upsigma}\subset K$ comme dans l'Exemple \ref{ExempleAnneaux}.  Alors on a l'\'{e}galit\'{e}
\[ A_{\upsigma} =PVS_{K} \cup \upmu_{K}\cup \{ 0\}\]
o\`{u} $\upmu_{K}\subset \mathcal{O}_{K}^{\times}$ est le sous-groupe multiplicatif des racines de l'unit\'{e}.
\end{prop}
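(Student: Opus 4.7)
The strategy is to prove the identity $A_{\upsigma} = PVS_{K} \cup \upmu_{K} \cup \{0\}$ by separating the two inclusions: the inclusion $PVS_{K} \cup \upmu_{K} \cup \{0\} \subset A_{\upsigma}$ will follow by direct inspection of the definitions, whereas the reverse inclusion $A_{\upsigma} \subset PVS_{K} \cup \upmu_{K} \cup \{0\}$ will rest on Kronecker's classical theorem characterizing algebraic integers all of whose Galois conjugates lie in the closed unit disc as being $0$ or roots of unity.

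I begin with the easy inclusion. The element $0$ trivially lies in $A_{\upsigma}$. For $\upalpha \in \upmu_{K}$, every Galois conjugate of $\upalpha$ has modulus $1$, so the window condition $|\uptau(\upalpha)| \leq 1$ is satisfied for every $\uptau \in \Upsigma'$, whence $\upalpha \in A_{\upsigma}$. For $\upalpha \in PVS_{K}$, the very definition of a (real or complex) Pisot--Vijayaraghavan or Salem element of $K$, namely an algebraic integer with $|\upsigma(\upalpha)|>1$, forces $|\uptau(\upalpha)|\leq 1$ for all $\uptau$ distinct from $\upsigma$ and (in the complex case) from $\overline{\upsigma}$; this is exactly the window condition defining $A_{\upsigma}$.

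For the reverse inclusion, I take a nonzero $\upalpha \in A_{\upsigma}$, identified with $\upsigma(\upalpha) \in K_{\upsigma}$, and split on the size of $|\upsigma(\upalpha)|$. If $|\upsigma(\upalpha)|>1$, then the window condition $|\uptau(\upalpha)| \leq 1$ for all $\uptau \in \Upsigma'$ is precisely what makes $\upalpha$ a PV number (when all such inequalities are strict) or a Salem number (when at least one is an equality); in either case $\upalpha \in PVS_{K}$. If on the other hand $|\upsigma(\upalpha)| \leq 1$, then using $|\overline{\upsigma}(\upalpha)| = |\upsigma(\upalpha)|$ in the complex case, every Galois conjugate of $\upalpha$ lies in the closed unit disc; since $\upalpha$ is moreover a nonzero algebraic integer of $\mathcal{O}_{K}$, Kronecker's theorem yields $\upalpha \in \upmu_{K}$.

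The main, and essentially only, substantive step is the appeal to Kronecker's theorem, which rules out the existence of a nonzero non-torsion algebraic integer with all Galois conjugates of modulus $\leq 1$. The rest of the argument is a bookkeeping exercise matching the window defining $A_{\upsigma}$ against the classical definitions of PV, Salem, and torsion elements.
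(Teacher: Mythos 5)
Your proof is correct and follows essentially the same route as the paper's: identify $A_{\upsigma}$ with its image under $\upsigma$, split on whether $|\upsigma(\upalpha)|>1$ (giving a PVS number by the window condition) or $|\upsigma(\upalpha)|\leq 1$ (giving a root of unity). The paper's version is merely terser, leaving Kronecker's theorem and the easy inclusion implicit, whereas you spell both out.
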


{\it Preuve.} On identifie $A_{\upsigma}$ à son plongement complexe en utilisant la place $\upsigma$. 
Pour tout $\upalpha\in A_{\upsigma}$, $|\upalpha^{\uptau}|\leq 1$ pour tout $\uptau\not=\upsigma,\bar{\upsigma}$.  
Si $|\upalpha|>1$ alors $\upalpha\in PVS_{K} $ et sinon $|\upalpha|=1$ et donc $\upalpha\in\upmu_{K}$.

\hfill $\square$

Un tel $A_{\upsigma}$ sera appelé {\em anneau quasicristallin $PVS$} associ\'{e} \`{a} $K$ et $\upsigma$;   
dans ce que suit nous nous concentrerons sur l'\'{e}tude de tels anneaux quasicristallins de rang 1.
En ce que suit on choisit une place complexe de chaque paire de places conjuguées : puis l'ensemble des places $\upsigma_{i}$ dont $\upsigma_{i},\overline{\upsigma}_{i}\not=\upsigma$ contient $r+s-1$  \'{e}l\'{e}ments.
Notons 
\[ \uppi':K_{\infty}\longrightarrow K_{\{ \upsigma\}'}\] la projection sur $K_{\{ \upsigma\}'}$, o\`{u} on rappelle que $\{ \upsigma\}'=$ complement de $\upsigma$ dans l'ensemble des places.
Soient ${\bf z}\in K_{\upsigma'}$ et $\boldsymbol \updelta=(\updelta_{1},\dots ,\updelta_{r+s-1})>0$ (i.e.\ $\updelta_{i}>0$ pour tout $i$).  On \'{e}crira
\[     |{\bf z}|< \boldsymbol\updelta^{\bf x}, \quad {\bf x}\in \R^{r+s-1}, \]
si et seulement si $|z_{i}|<\updelta_{i}^{x_{i}}$ pour tout $i$.
Si
\[ u\in U_{K}:=\mathcal{O}^{\times}_{K}/\upmu_{K} \cong \Z^{r+s-1}\]
est une unit\'{e} d'ordre infini, on d\'{e}finit
pour chaque ${\bf x} \in \R^{r+s-1}$ 
\[ \mathfrak{a}_{\bf x}(u) := \{ \upalpha\in\mathcal{O}_{K}|\; |\uppi'(\upalpha)|<|\uppi'(u)|^{\bf x} \}.
 \]
Il est clair que $\mathfrak{a}_{\bf x}(u)$ est un ensemble mod\`{e}le et un mono\"{\i}de multiplicatif, donc un id\'{e}al quasicristallin fractionnaire puisque $A_{\upsigma}$ agit dessus par multiplication.  Pour $u_{1}, u_{2},u\in U_{K}$, on a les inclusions 
\[\mathfrak{a}_{\bf x}(u_{1})\cdot  \mathfrak{a}_{\bf x}(u_{2})
\subset  \mathfrak{a}_{\bf x}(u_{1}u_{2}),\quad  
 \mathfrak{a}_{{\bf x}}(u)\cdot \mathfrak{a}_{{\bf y}}(u) \subset\mathfrak{a}_{{\bf x}+{\bf y}}(u), \]
ainsi que les \'{e}galit\'{e}s
\[ \mathfrak{a}_{-{\bf x}}(u) = \mathfrak{a}_{\bf x}(u^{-1}) ,\quad u\cdot \mathfrak{a}_{{\bf x}}(u) = \mathfrak{a}_{{\bf x}+{\bf 1}}(u )  \]
o\`{u} ${\bf 1}=(1,\dots ,1)$.  Pour tout $u\in U_{K}$, on a
\[  \mathfrak{a}_{\boldsymbol 0}:= PV_{K}\cap A_{\upsigma}= \mathfrak{a}_{{\bf 0}}(u), \]
l'ensemble de nombres de PV de $A_{\upsigma}$, qui est un id\'{e}al quasicristallin entier maximal de $A_{\upsigma}$ ;
son complémentaire est donné par
\[ A_{\upsigma}\setminus\mathfrak{a}_{0} = {S}_{K}\cup \upmu_{K}.\]

Dans la suite on fixe $u$ une {\em unit\'{e} de PV}\;\footnote{Il en existe toujours ; voir par exemple la preuve du Th\'{e}or\`{e}me 3.26 de Narkiewicz \cite{Nark}.}, c'est-\`{a}-dire une unit\'{e} $u$ d'ordre infini tel que $|\uppi'(u)|<{\bf 1}$. Nous omettrons 
parfois $u$ dans la notation, \'{e}crivant simplement $\mathfrak{a}_{\bf x}$.  
Outre les quasicristaux $\mathfrak{a}_{\bf x}$, il sera commode de disposer des quasicristaux associ\'{e}s
 \`{a} un vecteur $\boldsymbol{+} = (+_{1},\dots ,+_{d-1})  \in  \{0,1 \}^{d-1}$: si l'on note 
 \[ <_{+_{i}}=\left\{   
 \begin{array}{ll}
 < & \text{ si 
$ +_{i}=0$,} \\
\leq & \text{ si $+_{i}=1$}
\end{array}\right. ,\] on d\'{e}finit
\[ \mathfrak{a}_{\bf x}^{\boldsymbol +}=\{ \upalpha\in\mathcal{O}_{K}|\; |\uppi'(\upalpha)| <_{\boldsymbol +} |\uppi'(u)|^{\bf x} \}\supset \mathfrak{a}_{\bf x}.\]
Lorsque $\boldsymbol + = (1,\dots ,1)$, on note 
\[  \overline{\mathfrak{a}}:= \mathfrak{a}^{\boldsymbol +} ,\]
de sorte que
$ A_{\upsigma}=\overline{\mathfrak{a}}_{\boldsymbol 0}$  et $\mathfrak{a}^{\boldsymbol +}_{\boldsymbol x}=
\mathfrak{a}_{\boldsymbol x}$ quand ${\boldsymbol +}=(0,\dots, 0)$.
  
  Les quasicristaux $\mathfrak{a}_{\bf x}\subseteq\mathfrak{a}_{\bf x}^{\boldsymbol +}$  sont des id\'{e}aux 
 quasicristallins entiers si ${\bf x}\geq {\bf 0}$ (parce que $u$ est PV), et autrement
des id\'{e}aux quasicristallins fractionnaires. Clairement,
\[ \mathfrak{a}_{\bf x}\subsetneq\mathfrak{a}_{\bf x}^{\boldsymbol +}\;\;\Longrightarrow\;\; \exists i\text{ tel que }
 \upsigma_{i}( u)^{x_{i}}\in  \mathcal{O}_{K}. \]
Par exemple, si $K=\Q(\uptheta )$ est une extension quadratique réelle et $\uptheta>1$ une unit\'{e} fondamentale, 
alors pour $x\in\R$,
\[  \mathfrak{a}_{ x}(\uptheta )\subsetneq\mathfrak{a}_{x}^{+}(\uptheta) \Leftrightarrow 
  \uptheta^{x}\in  \mathcal{O}_{K},\] 
  et dans ce cas \[    \mathfrak{a}_{x}^{+}(\uptheta)= \mathfrak{a}_{x}(\uptheta )\cup\{ \pm  \uptheta^{x}\}.\]
  
\begin{note}\label{DefZ} Le choix de l'unit\'{e} $u$ est sans conséquence. Plus précisément la famille 
\[ \label{defZ}  Z :=\{ \mathfrak{a}^{\boldsymbol +}_{\bf x}(u) \}_{{\boldsymbol +}, {\bf x}}\]
est indépendante du choix de $u$.   En effet, si $\tilde{u}$ est autre unité de PV, il existe ${\bf y}$ tel que $|\uppi'(u)|^{\bf y}=|\uppi '(\tilde{u})|$.
\end{note}

\begin{note}\label{carzero} On peut considérer les analogues des quasicristaux et des ensembles mod\`{e}les dans 
le cas de caract\'{e}ristique positive (voir \S  \ref{DrinfeldHayes} ci-dessus
pour les notations).  En rempla\c{c}ant $\R$ par
${\bf R}$ on peut d\'{e}finir pareillement les notions de quasicristal et d'ensemble mod\`{e}le.  On
 a alors
\[  {\bf A}_{\infty_{1}}= {\bf A}_{\upsigma} = \mathfrak{a}_{0}^{+} = \{ g\in {\bf O}_{\bf K}|\; |\uppi'(g)|_{\infty_{1}}\leq {\bf 1}\}\]
qui est un anneau au sens usuel.
Dans ce contexte les $\mathfrak{a}_{\bf x}(u)$ s'identifient à
\[  \mathfrak{a}_{\bf n}:= \{g\in {\bf O}_{\bf K}|\; |\uppi'(g)|_{\infty_{1}}\leq q^{-{\bf n}}\}  , \quad \text{\bf n} = (n_{1},\dots , n_{r+s-1})\in\Z^{r+s-1}.\]
où $\mathfrak{a}_{\bf n}$ est un id\'{e}al fractionnaire au sens usuel.  Plus g\'{e}n\'{e}ralement tout id\'{e}al quasicristallin fractionnaire qui est un ensemble mod\`{e}le est un id\'{e}al fractionnaire au sens usuel de l'anneau ${\bf A}_{\infty_{1}}$.  
On en conclut que les analogues en caractéristique nulle des ``petits'' anneaux ${\bf A}_{\infty_{1}}$ sont les anneaux quasicristallins PV de type $A_{\upsigma}$.

\end{note}

\begin{note} Notons enfin que les constructions faites ci-dessus en rang 1 ont des \'{e}quivalents dans le cas où 
$A_{\Upsigma}$ est un anneau quasicristallin mod\`{e}le de rang $k$, $\Upsigma=\{ \upsigma_{1},\dots ,\upsigma_{k}\}$.  On d\'{e}finit en ce cas l'ensemble $PV_{\Upsigma}$ des nombres de PV associ\'{e}s \`{a} $\Upsigma$ comme 
\[ {PV}_{\Upsigma} =\{ \upalpha\in \mathcal{O}_{K}|\; |\upsigma_{1}(\upalpha)\cdots \upsigma_{k}(\upalpha )|>1,\;\; |\upsigma'(\upalpha )|<1 \text{ pour tout }\upsigma'\in\Upsigma'\}.\]
On a donc toujours $A_{\Upsigma}=PV_{\Upsigma}\cup \upmu_{K}$ et l'analogue des id\'{e}aux mod\`{e}les 
$\mathfrak{a}_{\bf x}(u )$ se d\'{e}finit par l'intermédiaire d'un vecteur ${\bf x}\in \R^{r+s-k}$ .  
\end{note}

\section{Mono\"{\i}des des classes d'id\'{e}aux quasicristallins}\label{monoides}
Dans ce qui suit on \'{e}tudie l'arithm\'{e}tique des id\'{e}aux quasicristallins.
On montre tout d'abord l'existence d'un produit mono\"{\i}dal :  

 \begin{prop} Soient $\mathfrak{a},\mathfrak{b}\subset A\subset \R^{n}$ deux id\'{e}aux quasicristallins contenus dans un anneau quasicristallin $A$.  
  Alors, le produit mono\"{\i}dal
\[ \mathfrak{a}\cdot\mathfrak{b}:=\{ \upalpha\upbeta|\; \upalpha\in\mathfrak{a},\; \upbeta\in\mathfrak{b}\}\subset \mathfrak{a}\cap\mathfrak{b}\] 
est aussi un quasicristal. \end{prop}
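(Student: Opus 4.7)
The plan is to verify, one by one, the three defining properties of a quasicrystal for $\mathfrak{a}\cdot\mathfrak{b}$. Uniform discreteness is immediate from the containment $\mathfrak{a}\cdot\mathfrak{b}\subset\mathfrak{a}\cap\mathfrak{b}$ established in the statement: the separation constant $r>0$ for $\mathfrak{a}$ works equally well for any subset. For relative density, I fix any nonzero $\upbeta_{0}\in\mathfrak{b}$ and observe that $\upbeta_{0}\cdot\mathfrak{a}\subset\mathfrak{a}\cdot\mathfrak{b}$. In the rings under consideration (notably the Minkowski embedding of a number field), multiplication by $\upbeta_{0}$ is an invertible continuous linear transformation of $\R^{n}$, so the image of the relatively dense set $\mathfrak{a}$ remains relatively dense, and a fortiori so does the larger set $\mathfrak{a}\cdot\mathfrak{b}$.

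The real work is in establishing the almost-lattice property, namely producing a finite $F\subset\R^{n}$ with $(\mathfrak{a}\cdot\mathfrak{b})-(\mathfrak{a}\cdot\mathfrak{b})\subset\mathfrak{a}\cdot\mathfrak{b}+F$. I would chain the obvious inclusions
\[ (\mathfrak{a}\cdot\mathfrak{b})-(\mathfrak{a}\cdot\mathfrak{b})\;\subset\;\mathfrak{a}-\mathfrak{a}\;\subset\;\mathfrak{a}+F_{\mathfrak{a}}, \]
where $F_{\mathfrak{a}}$ witnesses the Meyer property of $\mathfrak{a}$, and reduce to showing $\mathfrak{a}\subset\mathfrak{a}\cdot\mathfrak{b}+F'$ for some finite $F'$; then $F:=F'+F_{\mathfrak{a}}$ completes the proof. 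To build $F'$, I invoke the relative density of $\mathfrak{a}\cdot\mathfrak{b}$ with some constant $R'>0$: every $\upalpha\in\mathfrak{a}$ lies within distance $R'$ of a point $\upgamma_{\upalpha}\in\mathfrak{a}\cdot\mathfrak{b}$, so I set $F':=\{\upalpha-\upgamma_{\upalpha}:\upalpha\in\mathfrak{a}\}$, a bounded subset of $\mathfrak{a}-\mathfrak{a}$.

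The principal obstacle is showing that this $F'$ is in fact finite, which reduces to the classical lemma that $\mathfrak{a}-\mathfrak{a}$ is uniformly discrete for any quasicrystal $\mathfrak{a}$. This does not follow directly from the Meyer inclusion $\mathfrak{a}-\mathfrak{a}\subset\mathfrak{a}+F_{\mathfrak{a}}$, because $\mathfrak{a}+F_{\mathfrak{a}}$ itself need not be uniformly discrete. I would prove it by contradiction: suppose distinct $x_{n},y_{n}\in\mathfrak{a}-\mathfrak{a}$ satisfy $x_{n}-y_{n}\to 0$. Iterating the Meyer inclusion gives $(\mathfrak{a}-\mathfrak{a})-(\mathfrak{a}-\mathfrak{a})\subset\mathfrak{a}+F''$ with $F'':=F_{\mathfrak{a}}+F_{\mathfrak{a}}-F_{\mathfrak{a}}$ finite. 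Writing $x_{n}-y_{n}=a_{n}+f_{n}$ with $a_{n}\in\mathfrak{a}$ and $f_{n}\in F''$, pigeonhole yields a subsequence on which $f_{n}$ is constant, so $a_{n}$ converges; uniform discreteness of $\mathfrak{a}$ then forces $a_{n}$ to be eventually constant, making $x_{n}-y_{n}$ eventually equal to some fixed constant that must be $0$, contradicting $x_{n}\neq y_{n}$.
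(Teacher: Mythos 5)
Your proof is correct, but it follows a genuinely different route from the paper's. The two arguments share the first step: relative density of $\mathfrak{a}\cdot\mathfrak{b}$ via $\upbeta_{0}\cdot\mathfrak{a}\subset\mathfrak{a}\cdot\mathfrak{b}$ (the paper uses $\upalpha\cdot\mathfrak{b}$ for a nonzero $\upalpha\in\mathfrak{a}$), with the same implicit restriction to the setting where multiplication by a nonzero element is an invertible linear map. After that the paper does essentially no further work: it invokes the characterization of Moody (Theorem 9.1.i) that a relatively dense set is a quasicrystal if and only if it is contained in a (general) model set, and concludes at once from $\mathfrak{a}\cdot\mathfrak{b}\subset A\subset\mathcal{M}$. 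You instead prove the almost-lattice property by hand: you first show that $\mathfrak{a}-\mathfrak{a}$ is uniformly discrete by iterating the Meyer inclusion and using pigeonhole plus the uniform discreteness of $\mathfrak{a}$, deduce that your bounded set $F'=\{\upalpha-\upgamma_{\upalpha}\}\subset\mathfrak{a}-\mathfrak{a}$ is finite, and then chain $(\mathfrak{a}\cdot\mathfrak{b})-(\mathfrak{a}\cdot\mathfrak{b})\subset\mathfrak{a}-\mathfrak{a}\subset\mathfrak{a}+F_{\mathfrak{a}}\subset\mathfrak{a}\cdot\mathfrak{b}+F'+F_{\mathfrak{a}}$; all of these steps are sound. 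In effect you re-derive, in the case at hand, the implication behind the cited characterization (a relatively dense subset of a Meyer set is again Meyer). What your approach buys is a self-contained, elementary argument that also exhibits uniform discreteness and an explicit finite witness $F$; what the paper's approach buys is brevity and the observation that only containment in a model set matters, so the same one-line argument applies verbatim to any relatively dense subset of $A$. Note that both proofs tacitly use the footnoted fact that a nonzero integral quasicrystalline ideal is itself a quasicrystal (you need the relative density of $\mathfrak{a}$, its uniform discreteness, and the finite set $F_{\mathfrak{a}}$).
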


\begin{proof}[D\'{e}monstration] 
On note tout d'abord que $\mathfrak{a}\cdot\mathfrak{b}$ est relativement dense  car
\[ \upalpha\cdot\mathfrak{b}\subset \mathfrak{a}\cdot \mathfrak{b} \]
pour n'importe quel $\upalpha\in \mathfrak{a}$ non z\'{e}ro, et $\upalpha\cdot\mathfrak{b}$ est relativement dense. 
On rappelle ensuite qu'un ensemble relativement dense est un quasicristal si et seulement si il est contenu 
dans un ensemble mod\`{e}le (g\'{e}n\'{e}ral) $\mathcal{M}$ (voir \cite{Moody}, Theorem 9.1.i).
En particulier, $A\subset \mathcal{M}$ pour un certain ensemble mod\`{e}le $\mathcal{M}$.
Mais alors $\mathfrak{a}\cdot\mathfrak{b}\subset A\subset \mathcal{M}$, 
de sorte que $\mathfrak{a}\cdot\mathfrak{b}$ est bien un quasicristal.

 \end{proof}

Soit $A=A_{\upsigma}\subset K$ un anneau quasicristallin avec les notations de l'exemple 3 ci-dessus. 
Il est clair que si $\mathfrak{a}$ est un id\'{e}al quasicristallin fractionnaire, il en est de m\^eme de 
$\upgamma\mathfrak{a}$ pour tout $\upgamma\in K$.  La classe projective 
 \[ [\mathfrak{a}]=\{ \upgamma\mathfrak{a}|\; \upgamma\in K\}\] s'appelle la 
 {\em classe de l'id\'{e}al quasicristallin}.  L'ensemble des 
 classes d'id\'{e}aux quasicristallins de $A$ s'\'{e}crit 
 \[ {\sf Cl}(A)=\{ [\mathfrak{a}]\ |\; \mathfrak{a}\text{ un id\'{e}al quasicristallin
 fractionnaire de }A\}.\]   

\begin{prop}  
${\sf Cl}(A) $ est un mono\"{\i}de multiplicatif avec élément identit\'{e}
$1=[A]$. Cependant ce monoïde 
 n'est pas un groupe. En particulier :
 \begin{enumerate}
 \item[1.] les $\mathfrak{a}_{\bf x}$ ne sont pas inversibles pour tout ${\bf x}\in K_{\upsigma'}$ ;  
 \item[2.]  Supposons qu'il existe $\upalpha\in\overline{\mathfrak{a}}_{\bf x}$ tel que $\uppi'(\upalpha)=\uppi'(u)^{\bf x}\in\mathcal{O}_{K}^{\times}$ pour tout $i$ ; alors  $\overline{\mathfrak{a}}_{\bf x}$ est inversible d'inverse $\overline{\mathfrak{a}}_{-{\bf x}}$.
 \end{enumerate}
\end{prop}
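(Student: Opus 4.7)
The plan is to check the monoid axioms first, then prove non-invertibility in (1) by exploiting the strict inequality in the definition of $\mathfrak{a}_{\bf x}$, and finally to exhibit an explicit inverse in (2) using the boundary unit. For the monoid structure, the product $[\mathfrak{a}]\cdot[\mathfrak{b}]:=[\mathfrak{a}\cdot\mathfrak{b}]$ descends to classes because $(\upgamma\mathfrak{a})\cdot(\updelta\mathfrak{b})=(\upgamma\updelta)(\mathfrak{a}\cdot\mathfrak{b})$; by the preceding proposition $\mathfrak{a}\cdot\mathfrak{b}$ is again a quasicristal, and $A\cdot(\mathfrak{a}\cdot\mathfrak{b})\subseteq\mathfrak{a}\cdot\mathfrak{b}$ is clear. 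Associativity is inherited from $K^{\ast}$. Since $1\in A$ and, by definition of an ideal, $A\cdot\mathfrak{a}\subseteq\mathfrak{a}$, one gets $A\cdot\mathfrak{a}=\mathfrak{a}$, so $[A]$ is a two-sided identity.

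For (1), I argue by contradiction: assume there exist a quasicristallin ideal $\mathfrak{c}$ and $\upgamma\in K^{\ast}$ with $\mathfrak{a}_{\bf x}\cdot\mathfrak{c}=\upgamma A$. Set $M:=|\uppi'(u)|^{\bf x}$ and $N:=\sup_{\upbeta\in\mathfrak{c}}|\uppi'(\upbeta)|$ (coordinate-wise). Using density of $\uppi'(\mathcal{O}_{K})$ in $K_{\upsigma'}$ (a classical consequence of Minkowski's theorem) combined with the model-set description of $\mathfrak{a}_{\bf x}$ and $\mathfrak{c}$, one may choose $\upalpha\in\mathfrak{a}_{\bf x}$ and $\upbeta\in\mathfrak{c}$ independently with $|\uppi'(\upalpha)|$ and $|\uppi'(\upbeta)|$ arbitrarily close to $M$ and $N$ respectively; by multiplicativity, the supremum of $|\uppi'(\cdot)|$ on $\mathfrak{a}_{\bf x}\cdot\mathfrak{c}$ equals $MN$. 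On $\upgamma A$ the same supremum equals $|\upgamma'|$ and is \emph{attained} at $\upgamma=\upgamma\cdot 1$; matching the two yields $MN=|\upgamma'|$. But writing $\upgamma=\upalpha_{0}\upbeta_{0}$ with $\upalpha_{0}\in\mathfrak{a}_{\bf x}$ (so $|\uppi'(\upalpha_{0})|<M$ strictly in every coordinate) and $\upbeta_{0}\in\mathfrak{c}$ (so $|\uppi'(\upbeta_{0})|\leq N$) forces $|\upgamma'|=|\uppi'(\upalpha_{0})|\cdot|\uppi'(\upbeta_{0})|<MN$, a contradiction.

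For (2), let $\upalpha\in\overline{\mathfrak{a}}_{\bf x}$ be a boundary element with $\uppi'(\upalpha)=\uppi'(u)^{\bf x}$ having unit components; then $\upalpha^{-1}\in\mathcal{O}_{K}$ and $|\uppi'(\upalpha^{-1})|=|\uppi'(u)|^{-\bf x}$, hence $\upalpha^{-1}\in\overline{\mathfrak{a}}_{-\bf x}$. The inclusion $\overline{\mathfrak{a}}_{\bf x}\cdot\overline{\mathfrak{a}}_{-\bf x}\subseteq A$ follows from $|\uppi'(\upxi\upeta)|=|\uppi'(\upxi)|\cdot|\uppi'(\upeta)|\leq 1$. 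Conversely, for any $\updelta\in A=\overline{\mathfrak{a}}_{\boldsymbol 0}$, the element $\upbeta:=\updelta\upalpha^{-1}\in\mathcal{O}_{K}$ satisfies $|\uppi'(\upbeta)|=|\uppi'(\updelta)|\cdot|\uppi'(u)|^{-\bf x}\leq|\uppi'(u)|^{-\bf x}$, so $\upbeta\in\overline{\mathfrak{a}}_{-\bf x}$ and $\updelta=\upalpha\cdot\upbeta\in\overline{\mathfrak{a}}_{\bf x}\cdot\overline{\mathfrak{a}}_{-\bf x}$. Hence $\overline{\mathfrak{a}}_{\bf x}\cdot\overline{\mathfrak{a}}_{-\bf x}=A$, and $[\overline{\mathfrak{a}}_{\bf x}]^{-1}=[\overline{\mathfrak{a}}_{-\bf x}]$. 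The main delicate point I foresee lies in (1): one must ensure that the supremum $N$ is actually approached by projections of elements of $\mathfrak{c}$, not merely a bound coming from a surrounding model-set window. This should be extracted from the fact that any nonzero quasicristallin ideal is itself a model set whose projection is relatively dense in its own window, so that independent near-maximisers in $\mathfrak{a}_{\bf x}$ and $\mathfrak{c}$ can always be produced.
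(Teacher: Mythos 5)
Your treatment of the monoid structure and of part (2) is correct and is essentially the paper's own argument (the paper proves (2) by exhibiting $1=\upalpha\cdot\upalpha^{-1}\in\overline{\mathfrak{a}}_{\bf x}\cdot\overline{\mathfrak{a}}_{-{\bf x}}$ and concluding equality with $A$, which is exactly your computation made slightly more terse). In part (1) your mechanism is also the right one — the window of $\mathfrak{a}_{\bf x}$ is open and $\uppi'(\mathcal{O}_{K})$ is dense in $K_{\upsigma'}$, so the bound is approached but any single product loses strictly — but there is a genuine gap in how you handle the putative inverse $\mathfrak{c}$. A class in ${\sf Cl}(A)$ is represented by an \emph{arbitrary} fractional quasicristalline ideal, i.e.\ a quasicrystal in $\R$ stable under multiplication by $A$: it is not given to lie in $K$, so $\uppi'(\upbeta)$ is undefined a priori, your $N$ could be infinite, and — contrary to your closing remark — such an ideal need not be a model set at all (the paper recalls, following Moody, that there exist quasicrystals which are not model sets). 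So the ``model-set description of $\mathfrak{c}$'' you invoke is not available, and the false auxiliary claim is what you lean on to settle the very point you flag as delicate.

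The gap is fixable in one line, after which your supremum comparison goes through: from $\mathfrak{a}_{\bf x}\cdot\mathfrak{c}=\upgamma A$ and any nonzero $\upalpha_{1}\in\mathfrak{a}_{\bf x}$ one gets $\upalpha_{1}\mathfrak{c}\subset\upgamma A$, hence $\mathfrak{c}\subset(\upgamma/\upalpha_{1})A\subset K$, so $\uppi'$ makes sense on $\mathfrak{c}$ and $|\uppi'|$ is bounded there by $|\uppi'(\upgamma/\upalpha_{1})|$; once $N$ is a (finite) supremum over $\mathfrak{c}$ it is approached by definition, with no structural hypothesis on $\mathfrak{c}$, and density is only needed for $\mathfrak{a}_{\bf x}$, which genuinely is a model set with open window. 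It is worth noting that the paper's proof of (1) sidesteps all of this: since $1\in\mathfrak{a}_{\bf x}\mathfrak{b}$ (after rescaling), there is $\upalpha\in\mathfrak{a}_{\bf x}$ with $\upalpha^{-1}\in\mathfrak{b}$; choosing by density $\upbeta\in\mathfrak{a}_{\bf x}$ with $|\uppi'(\upalpha)|<|\uppi'(\upbeta)|<|\uppi'(u)|^{\bf x}$ coordinatewise gives $\upbeta\upalpha^{-1}\in\mathfrak{a}_{\bf x}\mathfrak{b}=A$ with $|\uppi'(\upbeta\upalpha^{-1})|>{\bf 1}$, a contradiction — only the single element $\upalpha^{-1}$ of $\mathfrak{b}$ is ever used, so no containment, boundedness or model-set property of the inverse is needed. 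Your global sup-versus-attainment version is a fine variant, but it must be preceded by the containment $\mathfrak{c}\subset(\upgamma/\upalpha_{1})A$ to be complete.
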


\begin{proof}[D\'{e}monstration]   1. Si $\mathfrak{b}$ est un inverse pour $\mathfrak{a}$, il existe $\upalpha\in \mathfrak{a}$ tel que
$\upalpha^{-1}\in\mathfrak{b}$, puisque le produit $\mathfrak{a}\mathfrak{b} = A$ contient $1$.  
Soit $\upbeta\in\mathfrak{a}$ tel que
$|\upalpha_{i}'|<|\upbeta_{i}'|<u_{i}^{-x_{i}}$ pour tout $i$ ; un tel élément existe parce que 
$\mathcal{O}_{K}\subset K_{\upsigma}'$ est dense.  Alors $\upbeta/\upalpha\in \mathfrak{a}\mathfrak{b}$ mais
n'est pas élément de $A$ ; contradiction.    
2. On peut supposer que $\upalpha\in\mathcal{O}_{K}^{\times}$, puis
 $1=\upalpha\cdot \upalpha^{-1}\in\overline{\mathfrak{a}}_{\bf x}\cdot \overline{\mathfrak{a}}_{-{\bf x}}$, ce qui implique que  $\overline{\mathfrak{a}}_{\bf x}\cdot \overline{\mathfrak{a}}_{-{\bf x}}= A$.
\end{proof}


Soit $\mathfrak{a}$ un id\'{e}al quasicristallin et soit 
\[ \mathfrak{A} := \mathfrak{a} \mathcal{O}_{K} \]
son {\it extension} en un $\mathcal{O}_{K}$-id\'{e}al, pour définition l'ensemble des sommes finies de la forme $\sum_{i} \upgamma_{i}\upalpha_{i}$, $\upgamma_{i}\in\mathcal{O}_{K}$, $\upalpha_{i}\in 
\mathfrak{a}$.  On note que
$  \mathfrak{a}\mathfrak{b}\mathcal{O}_{K} = (\mathfrak{a}\mathcal{O}_{K}) ( \mathfrak{b}\mathcal{O}_{K}) $ (produit des idéaux),
d'où le fait que cette opération d'extension
 induit un morphisme de mono\"{\i}des
\[ \Upphi: {\sf Cl}(A)\longrightarrow {\sf Cl}(K) .\]

\begin{prop} L'application $\Upphi$ est un épimorphisme de mono\"{\i}des. Son noyau contient l'ensemble 
$Z$ defini dans la {\it Note} {\rm \ref{DefZ}}.
\end{prop}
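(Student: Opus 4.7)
The plan is to verify the two assertions separately; that $\Upphi$ is actually a morphism of monoïdes follows from the identity $\mathfrak{a}\mathfrak{b}\mathcal{O}_{K} = (\mathfrak{a}\mathcal{O}_{K})(\mathfrak{b}\mathcal{O}_{K})$ already recorded above the statement, so only surjectivity and the kernel containment require work.

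For surjectivity, I would pick an integral representative $\mathfrak{I} \subseteq \mathcal{O}_{K}$ of a given class in ${\sf Cl}(K)$ and set $\mathfrak{a} := \mathfrak{I} \cap A_{\upsigma}$. By construction $\mathfrak{a}$ is the ensemble modèle $\mathcal{M}(K_{\upsigma}, K_{\upsigma'}, \mathfrak{I}, D_{\upsigma})$ associated to the sublattice $\mathfrak{I}$ of $\mathcal{O}_{K}$ with the very window $D_{\upsigma}$ used to define $A_{\upsigma}$; hence it is a quasicristal (Example \ref{ensmod}). It is multiplicatively closed and absorbs $A_{\upsigma}$, so it is an idéal quasicristallin contained in $A_{\upsigma}$. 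The key step is to show $\mathfrak{a}\mathcal{O}_{K} = \mathfrak{I}$; the inclusion $\subseteq$ is immediate, and for the reverse I exploit that $u$ is a PV unit, so $|\uppi'(u)| < \mathbf{1}$ componentwise. For any $\upalpha \in \mathfrak{I}$,
\[ |\uppi'(u^{n} \upalpha)| = |\uppi'(u)|^{n}\, |\uppi'(\upalpha)| \longrightarrow \mathbf{0} \qquad (n \to \infty), \]
so for $n$ sufficiently large $u^{n} \upalpha \in \mathfrak{I} \cap A_{\upsigma} = \mathfrak{a}$; since $u^{n} \in \mathcal{O}_{K}^{\times}$, one recovers $\upalpha = u^{-n}(u^{n}\upalpha) \in \mathcal{O}_{K} \cdot \mathfrak{a}$, giving $\mathfrak{I} \subseteq \mathfrak{a}\mathcal{O}_{K}$.

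For the kernel containment I would prove the stronger statement $\mathfrak{a}^{\boldsymbol{+}}_{\mathbf{x}}(u)\mathcal{O}_{K} = \mathcal{O}_{K}$ for every $\boldsymbol{+}$ and every $\mathbf{x}$. The inclusion $\subseteq \mathcal{O}_{K}$ is clear from $\mathfrak{a}^{\boldsymbol{+}}_{\mathbf{x}}(u) \subseteq \mathcal{O}_{K}$; for the reverse I look for a unit $u^{n}$ lying inside $\mathfrak{a}^{\boldsymbol{+}}_{\mathbf{x}}(u)$. Since $|\uppi'(u)| < \mathbf{1}$, the membership condition $|\uppi'(u)|^{n} <_{\boldsymbol{+}} |\uppi'(u)|^{\mathbf{x}}$ amounts to the inequalities $n >_{+_{i}} x_{i}$ on each coordinate, which hold for all sufficiently large $n$. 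Then $u^{n} \in \mathcal{O}_{K}^{\times}$ yields $\mathcal{O}_{K} = u^{n}\mathcal{O}_{K} \subseteq \mathfrak{a}^{\boldsymbol{+}}_{\mathbf{x}}(u)\mathcal{O}_{K}$, whence equality, and therefore $[\mathfrak{a}^{\boldsymbol{+}}_{\mathbf{x}}(u)] \in \ker\Upphi$.

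The main technical subtlety lies in the surjectivity step, namely in verifying that $\mathfrak{a} = \mathfrak{I} \cap A_{\upsigma}$ is genuinely a quasicristal (relatively dense, not merely uniformly discrete and non-empty): this is where the description as a modèle over the sublattice $\mathfrak{I}$ is essential, and where the PV asymptotic $|\uppi'(u^{n}\upalpha)| \to \mathbf{0}$ pays off by furnishing arbitrarily many elements of $\mathfrak{a}$ — precisely the mechanism that also powers the kernel computation. Apart from this point, both halves of the proof reduce to unwinding the definitions.
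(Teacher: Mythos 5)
Your proof is correct and follows essentially the same route as the paper's: both parts rest on the PV unit $u$ — powers of $u$ contract the conjugate coordinates so that elements of an integral representative $\mathfrak{I}\subset\mathcal{O}_{K}$ land in $A_{\upsigma}$, giving $(\mathfrak{I}\cap A_{\upsigma})\mathcal{O}_{K}=\mathfrak{I}$ for surjectivity, while each $\mathfrak{a}^{\boldsymbol +}_{\bf x}(u)$ contains some $u^{n}$ and hence extends to $\mathcal{O}_{K}$, placing $Z$ in the kernel. The only difference is organizational: the paper rescales a two-generator representative $(\upalpha,\upbeta)$ by $u^{n}$ before intersecting with $A$, whereas you argue elementwise and explicitly check that $\mathfrak{I}\cap A_{\upsigma}=\mathcal{M}(\mathfrak{I},D_{\Upsigma})$ is a quasicrystalline ideal, a point the paper leaves implicit.
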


\begin{proof}[D\'{e}monstration] Choisissons $u\in A$ une unit\'{e} PV.  
Si $\mathfrak{B}=(\upalpha,\upbeta )\subset\mathcal{O}_{K}$ est un id\'{e}al, on peut supposer après multiplication
 par $\upgamma=u^{n}\in\mathcal{O}_{K}^{\times}$, que $\upalpha,\upbeta$ satisfont $|\uppi'( \upalpha)|,|\uppi' 
 (\upbeta)|<{\bf 1}$, i.e.\ $\upalpha,\upbeta\in A$.
Alors $\mathfrak{b}:=\mathfrak{B}\cap A$ d\'{e}finit un \'{e}l\'{e}ment de ${\sf Cl}(A)$ tel que 
$\Upphi (\mathfrak{b})=\mathfrak{B}$.
Puisque tout $\mathfrak{a}\in Z$ contient une unit\'{e} de la forme $\upgamma =u^{n}$, 
$Z$ est bien contenu dans le noyau de $\Phi$.
\end{proof}


 
 On note que le produit mono\"{\i}dal est ``incomplet'' dans le sens qu'il ne co\"{\i}ncide pas avec le produit des 
 id\'{e}aux quand $\mathfrak{a}$, $\mathfrak{b}$ en sont. Par exemple, on peut faire
 toutes les constructions ci-dessus en caract\'{e}ristique positive, en considérant les extensions finies 
 ${\bf K}$ du corps de fonctions ${\bf Q}=\F_{q}(T)$.  
 En ce cas les ensembles mod\`{e}les sont des id\'{e}ax d'anneaux du type ${\bf A}_{\infty_{1}}$ 
 (voir au \S 1 et la note 2 ci-dessus) mais le produit introduit ici n'est pas le produit de ces id\'{e}aux :
 il  manque ici l'operation d'addition qui n'est pas assez bien contrôlée en caract\'{e}ristique nulle, 
 dans le cadre de quasicristaux. Cependant si l'on se restreint au id\'{e}aux quasicristallins qui sont 
 des ensembles mod\`{e}les, il est possible de d\'{e}finir un produit qui incorpore la somme d'une 
 manière satisfaisante, et tel qu'il co\"{\i}ncide  avec le produit des id\'{e}aux lorsque 
 $\mathfrak{a}$, $\mathfrak{b}$ en sont.  Dans ce qui suit, nous d\'{e}veloppons la définition d'un tel produit.
 
Soient $\mathfrak{A}$ un $\mathcal{O}_{K}$-id\'{e}al fractionnaire (r\'{e}alis\'{e} comme un r\'{e}seau de $K_{\infty}$) et
 $D \subset K_{\upsigma'}$ une fen\^{e}tre qui est un produit d'intervalles de la forme 
 \[ D^{\boldsymbol +}_{\bf x}(u):=
  \prod (-u^{x_{i}},u^{x_{i}})_{+_{i}}, \quad (-u^{x_{i}},u^{x_{i}})_{+_{i}} =
 \left\{ \begin{array}{ll}
 (-u^{x_{i}},u^{x_{i}}) & \text{si $+_{i}=0$} \\
  \text{$[ -u^{x_{i}},u^{x_{i}} ]$}& \text{si $+_{i}=1$}  
 \end{array}
 \right. .\]
 Soit
 \[ \mathfrak{a} =\mathfrak{a}^{\boldsymbol +}_{\mathfrak{A},{\bf x}}(u)=\mathcal{M} (\mathfrak{A}, D) 
 \] l'ensemble
 mod\`{e}le associ\'{e}.   On observe que le choix de la fen\^{e}tre $D$ n'est pas forc\'{e}ment unique : 
 par exemple, il est possible (en fait probable) qu'on ait l'\'{e}galit\'{e}
 \[ \mathcal{M} (\mathfrak{A}, D^{\boldsymbol +}_{\bf x}(u))=\mathcal{M} (\mathfrak{A}, D_{\bf x}(u))\]
pour tout $\boldsymbol +$, o\`{u} $ D_{\bf x}(u)$ correspond \`{a} ${\boldsymbol +}=(0,\dots ,0)$.
Dans ce cas, on dit que l'ensemble mod\`{e}le est {\em g\'{e}n\'{e}rique} et on le spécifie comme ensemble 
mod\`{e}le, en privilégiant la fen\^{e}tre  la plus petite, c'est-\`{a}-dire $D_{\bf x}(u)$, plutôt 
que les autres $D^{\boldsymbol +}_{\bf x}(u)$.  En g\'{e}n\'{e}ral, il existe, pour chaque ensemble 
mod\`{e}le de ce type, une fenêtre plus petite: l'intersection
\[  \bigcap_{\mathfrak{a}=\mathcal{M}(\mathfrak{A},D^{\boldsymbol +}_{\bf x}(u)) }D^{\boldsymbol +}_{\bf x}(u),\]
qui est aussi de la forme $D^{\boldsymbol +}_{\bf x}(u)$ por un choix appropri\'{e} de $\boldsymbol +$.
De cette fa\c{c}on, chaque ensemble 
mod\`{e}le basé sur le réseau $\mathfrak{A}$ est d\'{e}termin\'{e} et se d\'{e}termine par sa  fen\^{e}tre plus petite. 
Dans le cas g\'{e}n\'{e}rique on note simplement $\mathfrak{a}$ pour
 \[ \mathfrak{a}_{\mathfrak{A},{\bf x}}(u) = \mathcal{M} (\mathfrak{A}, D^{\boldsymbol +}_{\bf x}(u)))=\mathcal{M} (\mathfrak{A}, D_{\bf x}(u)) ;\]
 dans les autres cas on précisera la distinction 
 \[  \mathfrak{a}^{\boldsymbol +}= \mathfrak{a}^{\boldsymbol +}_{\mathfrak{A},{\bf x}}(u) 
 \supsetneq  \mathfrak{a}=
 \mathfrak{a}_{\mathfrak{A},{\bf x}}(u).
 \]
 
 \begin{lemm}\label{uniquerep}  Si $\mathcal{M}(\mathfrak{A}, D)=\mathcal{M}(\mathfrak{B}, D')$ où $D, D'$ son les fenêtres plus petites (au sens expliqué dans les paragraphes dessus) alors $\mathfrak{A}=\mathfrak{B}$ et $D=D'$. 
 \end{lemm}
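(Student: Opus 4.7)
Mon plan est de r\'ecup\'erer d'abord les r\'eseaux $\mathfrak{A}$ et $\mathfrak{B}$ \`a partir de l'ensemble mod\`ele commun $\mathcal{M}=\mathcal{M}(\mathfrak{A}, D) = \mathcal{M}(\mathfrak{B}, D')$, en exploitant l'action multiplicative de l'unit\'e PV $u\in \mathcal{O}_{K}^{\times}\cap A_{\upsigma}$, puis de conclure $D=D'$ par unicit\'e de la fen\^etre minimale.

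Pour la premi\`ere \'etape, j'observerais que comme $u\in\mathcal{O}_{K}^{\times}$, on a $u^{-n}\mathfrak{A}=\mathfrak{A}$ et $u^{-n}\mathfrak{B}=\mathfrak{B}$ pour tout $n\geq 0$. En multipliant $\mathcal{M}$ par $\upsigma(u^{-n})$ dans $K_{\upsigma}$, j'obtiendrais
\[ \upsigma(u^{-n})\,\mathcal{M}\;=\;\mathcal{M}(\mathfrak{A},\uppi'(u)^{-n}D)\;=\;\mathcal{M}(\mathfrak{B},\uppi'(u)^{-n}D').\]
Comme $u$ est PV, $|\upsigma_{i}(u)|<1$ pour chaque $\upsigma_{i}\in\{\upsigma\}'$, donc les facteurs de dilatation $|\upsigma_{i}(u)|^{-n}$ tendent vers $+\infty$ ; puisque la fen\^etre $D=\prod(-u^{x_{i}},u^{x_{i}})_{+_{i}}$ contient un voisinage de l'origine dans $K_{\upsigma'}$, les dilat\'es $\uppi'(u)^{-n}D$ exhaustent $K_{\upsigma'}$, ce qui donnerait
\[ \bigcup_{n\geq 0}\upsigma(u^{-n})\,\mathcal{M}\;=\;\upsigma(\mathfrak{A})\;=\;\upsigma(\mathfrak{B})\subset K_{\upsigma}.\]
Comme $\upsigma:K\hookrightarrow K_{\upsigma}$ est injectif sur $\mathcal{O}_{K}$, ceci forcerait $\mathfrak{A}=\mathfrak{B}$ en tant qu'id\'eaux fractionnaires de $\mathcal{O}_{K}$.

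Pour la seconde \'etape, il suffirait alors d'invoquer la d\'efinition : la fen\^etre minimale associ\'ee au couple $(\mathfrak{A},\mathcal{M})$ est l'intersection $\bigcap\{D^{\boldsymbol +}_{\mathbf{x}}(u) : \mathcal{M}(\mathfrak{A}, D^{\boldsymbol +}_{\mathbf{x}}(u))=\mathcal{M}\}$ et ne d\'epend donc que de ce couple ; avec $\mathfrak{A}=\mathfrak{B}$ en main, $D$ et $D'$ co\"{\i}ncideraient avec cette m\^eme intersection. L'obstacle principal de l'argument se situe dans la premi\`ere \'etape, o\`u il faut v\'erifier soigneusement que $\bigcup_{n\geq 0}\uppi'(u)^{-n}D=K_{\upsigma'}$ : cette \'egalit\'e repose \`a la fois sur la propri\'et\'e Pisot de $u$ (qui assure que les dilatations croissent sans borne dans chaque coordonn\'ee de $K_{\upsigma'}$) et sur le fait, garanti par la forme explicite de $D^{\boldsymbol +}_{\mathbf{x}}(u)$, que $0$ est \`a l'int\'erieur de $D$.
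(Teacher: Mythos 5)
Your argument is correct, and it runs on the same engine as the paper's proof: the PV unit $u$ satisfies $u\mathfrak{A}=\mathfrak{A}$ and $u\mathfrak{B}=\mathfrak{B}$, while multiplication by $\uppi'(u)$ contracts $K_{\upsigma'}$ and the windows contain $0$ in their interior. But you deploy this mechanism differently. The paper argues by contradiction: if $\mathfrak{A}\neq\mathfrak{B}$, then (say) $\mathfrak{A}\setminus\mathfrak{B}$ is infinite and stable under multiplication by $u$, so some $\upalpha\in\mathfrak{A}\setminus\mathfrak{B}$ has $\uppi'(\upalpha)$ pushed into the common window $D\cap D'$; such an $\upalpha$ lies in $\mathcal{M}(\mathfrak{A},D)$ but not in $\mathcal{M}(\mathfrak{B},D')$, contradicting the hypothesis. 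You instead reconstruct the lattice directly from the model set via $\upsigma(\mathfrak{A})=\bigcup_{n\geq 0}\upsigma(u)^{-n}\mathcal{M}$, which gives $\mathfrak{A}=\mathfrak{B}$ at once; this is slightly more informative, since it exhibits a canonical formula recovering the underlying $\mathcal{O}_{K}$-ideal from the quasicrystal itself, at the mild cost of verifying the exhaustion $\bigcup_{n\geq 0}\uppi'(u)^{-n}D=K_{\upsigma'}$ -- which you do, correctly, from the Pisot property and the fact that $0$ is interior to $D$. The passage from $\mathfrak{A}=\mathfrak{B}$ to $D=D'$ is the same in both proofs: the minimal window is by definition the intersection of all admissible windows producing $\mathcal{M}$ from the (now common) lattice, hence depends only on the pair $(\mathfrak{A},\mathcal{M})$.
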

 
 \begin{proof} Au moins un des ensembles $\mathfrak{A}\setminus\mathfrak{B}$, $\mathfrak{B}\setminus\mathfrak{A}$ est infini si $\mathfrak{A}\not=\mathfrak{B}$: supposons que
 $\mathfrak{A}\setminus\mathfrak{B}$ est infini.   Soit $D''=D\cap D'$.  Puisque l'action de $u\in\mathcal{O}_{K}^{\times}$
 stabilise $\mathfrak{A}\setminus\mathfrak{B}$, il en suit qu'il existe $\upalpha\in \mathcal{M}(\mathfrak{A}, D'')\setminus \mathcal{M}(\mathfrak{B}, D')\subset \mathcal{M}(\mathfrak{A}, D)\setminus \mathcal{M}(\mathfrak{B}, D')$.   Donc $\mathfrak{A}=\mathfrak{B}$ et $D=D'$.
 \end{proof}

 On dit que l'ensemble mod\`{e}le non g\'{e}n\'{e}rique $\overline{\mathfrak{a}}$ est {\em ferm\'{e}}; si de plus il existe $\upalpha\in\overline{\mathfrak{a}}$ tel que $\upalpha^{-1}\in\mathfrak{A}^{-1}$ et
 $|\uppi'(\tilde{u})| = |\uppi'(u)|^{-{\bf x}}$, on dit que $\overline{\mathfrak{a}}$ est {\it unitaire}.
  Dans tous les cas on appelle $\mathfrak{a}$
un {\em id\'{e}al quasicristallin mod\`{e}le} : c'est bien un id\'{e}al quasicristallin de $A$ au sens précédent. 

Soient maintenant $\mathfrak{a}=\mathcal{M}(\mathfrak{A}, D)$, $\mathfrak{b}=\mathcal{M}(\mathfrak{B}, D')$ 
deux id\'{e}aux quasicristallins mod\`{e}les.
On peut alors d\'{e}finir un produit qui se restreint au produit usuel des id\'{e}aux fractionnaires 
quand $\mathfrak{a}$, $\mathfrak{b}$ en sont:
\[ \mathfrak{a}\ast\mathfrak{b}:=\mathcal{M}  (\mathfrak{A}\ast\mathfrak{B}, D\cdot D' )\supset \mathfrak{a}\cdot\mathfrak{b} ,\]
o\`{u} $\mathfrak{A}\ast\mathfrak{B}$ est le produit usuel d'id\'{e}aux fractionnaires et \[ D\cdot D'=\{ {\bf x}={\bf t}{\bf t}'|\; {\bf t}\in D,\; {\bf t}'\in D'\}.\]

En raison de notre convention pour le choix de fen\^{e}tre, le produit $ \mathfrak{a}\ast\mathfrak{b}$ est bien d\'{e}fini
et c'est aussi un id\'{e}al quasicristallin mod\`{e}le. L'ensemble des id\'{e}aux quasicristallins mod\`{e}les, soit
 \[ {\sf I}^{\sf mod}(A) ,\] constitue un mono\"{\i}de lorsqu'il est muni du 
 produit $\ast$ dont  l'\'{e}l\'{e}ment neutre est $1:=A=\mathcal{M} (\mathcal{O}_{K}, D_{\boldsymbol 0})$.

\begin{prop} L'ensemble 
 \[{\sf I}^{\sf mod}_{0} (A) := \{\text{id\'{e}aux quasicristallins mod\`{e}les unitaires}\} \]
  est un sous-groupe du monoïde ${\sf I}^{\sf mod}(A)$.
  \end{prop}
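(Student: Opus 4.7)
Le plan consiste \`{a} v\'{e}rifier les trois propri\'{e}t\'{e}s de sous-groupe pour ${\sf I}^{\sf mod}_{0}(A)$ muni du produit $\ast$. Tout d'abord, on remarque que l'\'{e}l\'{e}ment neutre $1 = A = \mathcal{M}(\mathcal{O}_{K}, D^{\boldsymbol{+}}_{\boldsymbol{0}}(u))$ (avec $\boldsymbol{+}=(1,\dots,1)$) est unitaire, puisqu'il est ferm\'{e} et contient $1 \in \mathcal{O}_{K}^{\times}$ qui r\'{e}alise $|\uppi'(1)| = |\uppi'(u)|^{\boldsymbol{0}}$.

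Pour la cl\^{o}ture, soient $\overline{\mathfrak{a}} = \mathcal{M}(\mathfrak{A}, D^{\boldsymbol{+}}_{\mathbf{x}}(u))$ et $\overline{\mathfrak{b}} = \mathcal{M}(\mathfrak{B}, D^{\boldsymbol{+}}_{\mathbf{y}}(u))$ deux id\'{e}aux quasicristallins mod\`{e}les unitaires, avec unit\'{e}s distingu\'{e}es $\upalpha, \upbeta \in \mathcal{O}_{K}^{\times}$ v\'{e}rifiant $|\uppi'(\upalpha)| = |\uppi'(u)|^{\mathbf{x}}$ et $|\uppi'(\upbeta)| = |\uppi'(u)|^{\mathbf{y}}$. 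J'affirme que $\overline{\mathfrak{a}} \ast \overline{\mathfrak{b}} = \mathcal{M}(\mathfrak{AB}, D^{\boldsymbol{+}}_{\mathbf{x}+\mathbf{y}}(u))$ est unitaire. Pour cela, il faudra d\'{e}montrer l'\'{e}galit\'{e} de fen\^{e}tres $D^{\boldsymbol{+}}_{\mathbf{x}}(u) \cdot D^{\boldsymbol{+}}_{\mathbf{y}}(u) = D^{\boldsymbol{+}}_{\mathbf{x}+\mathbf{y}}(u)$, ce qui r\'{e}sulte d'un calcul direct sur les intervalles ferm\'{e}s $[-u^{x_{i}}, u^{x_{i}}] \cdot [-u^{y_{i}}, u^{y_{i}}] = [-u^{x_{i}+y_{i}}, u^{x_{i}+y_{i}}]$. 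L'unit\'{e} distingu\'{e}e pour le produit est alors $\upalpha\upbeta \in \mathcal{O}_{K}^{\times}$, qui satisfait la condition aux bords requise.

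Pour les inverses, \'{e}tant donn\'{e} $\overline{\mathfrak{a}} = \mathcal{M}(\mathfrak{A}, D^{\boldsymbol{+}}_{\mathbf{x}}(u))$ unitaire d'unit\'{e} distingu\'{e}e $\upalpha$, on pose
\[ \overline{\mathfrak{a}}^{-1} := \mathcal{M}(\mathfrak{A}^{-1}, D^{\boldsymbol{+}}_{-\mathbf{x}}(u)). \]
Comme $\upalpha \in \mathcal{O}_{K}^{\times}$ avec $\upalpha^{-1} \in \mathfrak{A}^{-1}$ et $|\uppi'(\upalpha^{-1})| = |\uppi'(u)|^{-\mathbf{x}}$, cet ensemble est bien un id\'{e}al quasicristallin mod\`{e}le unitaire. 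Il reste \`{a} v\'{e}rifier que $\overline{\mathfrak{a}} \ast \overline{\mathfrak{a}}^{-1} = A$ : le calcul du produit des r\'{e}seaux donne $\mathfrak{A} \cdot \mathfrak{A}^{-1} = \mathcal{O}_{K}$ (produit usuel d'id\'{e}aux fractionnaires), et le produit des fen\^{e}tres donne $D^{\boldsymbol{+}}_{\mathbf{x}}(u) \cdot D^{\boldsymbol{+}}_{-\mathbf{x}}(u) = D^{\boldsymbol{+}}_{\boldsymbol{0}}(u)$ par le m\^{e}me calcul qu'auparavant.

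Le point d\'{e}licat est d'assurer que l'inverse est bien d\'{e}fini, c'est-\`{a}-dire ind\'{e}pendant des choix de repr\'{e}sentations $(\mathfrak{A}, D)$. Ceci r\'{e}sulte du Lemme \ref{uniquerep}, qui garantit qu'en travaillant syst\'{e}matiquement avec la fen\^{e}tre la plus petite, la paire $(\mathfrak{A}, D)$ est d\'{e}termin\'{e}e par $\overline{\mathfrak{a}}$. L'obstacle principal sera donc de contr\^{o}ler soigneusement la combinatoire du vecteur $\boldsymbol{+}$ lors des produits, afin de v\'{e}rifier que la fen\^{e}tre produit est bien la fen\^{e}tre minimale de l'id\'{e}al produit ; la pr\'{e}sence de l'unit\'{e} alg\'{e}brique $\upalpha$ \`{a} la fronti\`{e}re de la fen\^{e}tre garantit pr\'{e}cis\'{e}ment que l'on ne peut pas diminuer $\mathbf{x}$ sans perdre cet \'{e}l\'{e}ment, ce qui l\'{e}gitime la minimalit\'{e}.
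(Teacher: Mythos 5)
Your proof is correct and follows essentially the same route as the paper's: closure of $\ast$ witnessed by the boundary element $\upalpha\upbeta$, the inverse $\overline{\mathfrak{a}}_{\mathfrak{A}^{-1},-{\bf x}}$ made unitary by $\upalpha^{-1}$ with $\mathfrak{A}\ast\mathfrak{A}^{-1}=\mathcal{O}_{K}$, and the identity $A=\overline{\mathfrak{a}}_{\mathcal{O}_{K},\boldsymbol 0}$, together with the minimal-window convention of Lemme \ref{uniquerep}. The only nuance is that the definition of ``unitaire'' asks merely for $\upalpha\in\overline{\mathfrak{a}}$ with $\upalpha^{-1}\in\mathfrak{A}^{-1}$ and $|\uppi'(\upalpha)|=|\uppi'(u)|^{\bf x}$, not $\upalpha\in\mathcal{O}_{K}^{\times}$ (e.g.\ the principal ideals $\upalpha A$, $\upalpha\in K^{\times}$, are unitary); your computations go through verbatim under this weaker hypothesis.
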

  \begin{proof}[Démonstration]  
  Soient $ \overline{\mathfrak{a}}_{\mathfrak{A},{\bf x}} \in {\sf I}^{\sf mod}_{0} (A)$ et $\upalpha\in  \overline{\mathfrak{a}}_{\mathfrak{A},{\bf x}} $
  l'élément qu'en fait unitaire, alors $\upalpha^{-1}\in\overline{\mathfrak{a}}_{\mathfrak{A}^{-1},-{\bf x}}$ en fait du dernier unitaire et on a 
  \[  \overline{\mathfrak{a}}_{\mathfrak{A},{\bf x}}\ast \overline{\mathfrak{a}}_{\mathfrak{A}^{-1},-{\bf x}}=A=1.\]
  Si $ \overline{ \mathfrak{a}}_{\mathfrak{A},{\bf x}},  \overline{\mathfrak{b}}_{\mathfrak{B},{\bf y}}$ sont unitaires par $\upalpha$, $\upbeta$, alors $\overline{\mathfrak{a}}_{\mathfrak{A}\ast\mathfrak{B},
 {\bf x}+{\bf y}} $
 est unitaire par rapport à $\upalpha\upbeta$ 
et
 \[ \overline{ \mathfrak{a}}_{\mathfrak{A},{\bf x}}\ast \overline{\mathfrak{b}}_{\mathfrak{B},{\bf y}} = \overline{\mathfrak{a}}_{\mathfrak{A}\ast\mathfrak{B},
 {\bf x}+{\bf y}}.  \]
\end{proof}

On appellera {\em principal} l'id\'{e}al quasicristallin
\[ \upalpha A:=
 \mathcal{M}(\upalpha\mathcal{O}_{K}, D_{\upalpha})\]
o\`{u} $\upalpha\in K^{\times}$ et
\[ D_{\upalpha} = [-|\upsigma_{1}(\upalpha)|,|\upsigma_{1}(\upalpha)|]\times \dots \times   [-|\upsigma_{r+s-1}(\upalpha)|,|\upsigma_{r+s-1}(\upalpha)|].\]
De manière équivalente, si l'on note
\begin{align}\label{lognotation} {\bf x}(\upalpha )& := \log_{|\uppi'(u)|} (|\uppi'(\upalpha)|) \\ 
& := \left(  \log_{|\upsigma_{1}(u)| }|\upsigma_{1}(\upalpha) | ,\dots ,
 \log_{|\upsigma_{r+s-1}(u)| }|\upsigma_{r+s-1}(\upalpha) | 
 \right),\nonumber \end{align}
 alors on a
 \[ \upalpha A = \overline{\mathfrak{a}}_{(\upalpha ),{\bf x}(\upalpha )}. \]

On remarque que pour tout $\upalpha\in A$, $\upalpha A\subset A$, autrement dit $\upalpha A$ est un id\'{e}al 
quasicristallin entier.  
En g\'{e}n\'{e}ral les id\'{e}aux quasicristallins principaux sont
tous unitaires et pour cela forment un sous-groupe
\[  {\sf P}^{\sf mod}(A)= \left\{ \upalpha A
:\;\; \upalpha\in K^{\times}\right\} \subset {\sf I}^{\sf mod}_{0} (A) \subset {\sf I}^{\sf mod}(A) .
 \]
On note le quotient
\[{\sf Cl}^{\sf mod}( A) :=  {\sf I}^{\sf mod}( A)/ {\sf P}^{\sf mod}( A) ;\]
c'est le {\em mono\"{\i}de des classes d'id\'{e}aux quasicristallins mod\`{e}les}. Il contient  
\begin{align}\label{inversibles} {\sf Cl}_{0}^{\sf mod}( A) ,\end{align}
 le sous-groupe des classes d'\'{e}l\'{e}ments de ${\sf I}^{\sf mod}_{0} (A)$.





Soient $u_{1},\dots ,u_{r+s-1}$ un choix de syst\`{e}me d'unit\'{e}s fondamentales.  Les vecteurs  
\[ {\bf x}(u_{1}), \dots , {\bf x}(u_{r+s-1})  \, ,\]
 de $\R^{r+s-1}$ (voir (\ref{lognotation}) ci-dessus)
 sont linéairement independants ; on note $\Upupsilon$ le groupe additif qu'ils engendrent.   
 
 \begin{lemm}\label{DirLemma} $\Upupsilon\subset \R^{r+s-1}$ est un r\'{e}seau.\footnote{Ce lemme est essentielmente le th\'{e}or\`{e}me d'unités de Dirichlet avec une modification petit: 
 on utilise $\log_{|\uppi'(u)|}$ au lieu du logarithme naperian.}
 \end{lemm}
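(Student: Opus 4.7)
Le plan est d'exprimer la famille $\{{\bf x}(u_i)\}_{i=1}^{r+s-1}$ comme l'image d'une $\Z$-base de $\mathcal{O}_K^{\times}/\upmu_K$ sous un isomorphisme lin\'{e}aire explicite allant du r\'{e}seau classique de Dirichlet vers $\R^{r+s-1}$. De cette fa\c{c}on, l'ind\'{e}pendance lin\'{e}aire annonc\'{e}e juste avant le lemme ainsi que le caract\`{e}re discret du groupe $\Upupsilon$ r\'{e}sulteront simultan\'{e}ment du th\'{e}or\`{e}me des unit\'{e}s.

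\emph{\'{E}tape 1.} Je consid\`{e}re le plongement logarithmique classique de Dirichlet
\[ L:\mathcal{O}_K^{\times}/\upmu_K\longrightarrow \R^{r+s},\quad
L(v) = \bigl(\log|\upsigma_1(v)|,\dots,\log|\upsigma_{r+s-1}(v)|,\log|\upsigma(v)|\bigr),\]
dont l'image est un r\'{e}seau $\Uplambda$ de rang $r+s-1$ contenu dans l'hyperplan $H\subset \R^{r+s}$ d\'{e}fini par la formule du produit $\sum_\nu e_\nu x_\nu = 0$, o\`{u} $e_\nu\in\{1,2\}$ selon que la place $\nu$ est r\'{e}elle ou complexe.

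\emph{\'{E}tape 2.} Je projette sur les $r+s-1$ premi\`{e}res coordonn\'{e}es par $p:\R^{r+s}\to \R^{r+s-1}$. Le noyau de $p$ est l'axe de coordonn\'{e}e associ\'{e} \`{a} la place $\upsigma$, qui n'est pas contenu dans $H$ puisque le coefficient $e_\upsigma\geq 1$ est non nul. Donc $p|_H$ est un isomorphisme lin\'{e}aire $H\cong \R^{r+s-1}$ et $p(\Uplambda)$ est un r\'{e}seau de rang $r+s-1$ dans $\R^{r+s-1}$.

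\emph{\'{E}tape 3.} Je compose avec la transformation diagonale $T:\R^{r+s-1}\to \R^{r+s-1}$ dont les entr\'{e}es diagonales sont $(\log|\upsigma_i(u)|)^{-1}$, $i=1,\dots,r+s-1$. Comme $u$ est une unit\'{e} de Pisot-Vijayaraghavan, $|\upsigma_i(u)|<1$ pour chaque tel $i$, donc $\log|\upsigma_i(u)|\not=0$ et $T$ est un automorphisme lin\'{e}aire. Un calcul direct donne
\[ (T\circ p\circ L)(v) = {\bf x}(v) \qquad \text{pour tout } v\in \mathcal{O}_K^{\times}/\upmu_K. \]
Puisque $u_1,\dots,u_{r+s-1}$ forment une $\Z$-base de $\mathcal{O}_K^{\times}/\upmu_K$ par le th\'{e}or\`{e}me des unit\'{e}s, leurs images ${\bf x}(u_1),\dots,{\bf x}(u_{r+s-1})$ forment une base du r\'{e}seau $T(p(\Uplambda))$, qui n'est autre que $\Upupsilon$.

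Il n'y a pas d'obstacle s\'{e}rieux : tout le contenu non trivial est import\'{e} du th\'{e}or\`{e}me de Dirichlet, les seules v\'{e}rifications additionnelles \'{e}tant (i) que l'omission de la coordonn\'{e}e associ\'{e}e \`{a} $\upsigma$ ne perd pas d'information, ce qui d\'{e}coule de la formule du produit, et (ii) que le changement de base logarithmique par $u$ est inversible sur chaque coordonn\'{e}e, ce qui d\'{e}coule imm\'{e}diatement du caract\`{e}re PV de $u$.
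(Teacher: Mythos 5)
Your proof is correct, but it takes a different route from the paper's. The paper assumes the rank statement (linear independence of the ${\bf x}(u_i)$, which is where Dirichlet enters) and then verifies discreteness directly by hand: it pulls a cube $C\subset\R^{r+s-1}$ back through the logarithm to a product of annuli in $K_{\upsigma'}$, uses the norm relation $\updelta\cdot\prod\upsigma_{i}(\updelta)=\pm 1$ to bound the missing $\upsigma$-coordinate of any unit landing there, and concludes that only finitely many elements of the lattice $\mathcal{O}_K$ (hence of $\mathcal{O}_K^{\times}$) can contribute to $\Upupsilon\cap C$. You instead make the reduction to the classical unit theorem purely linear-algebraic: $\Upupsilon$ is exhibited as the image of the Dirichlet lattice $\Uplambda\subset H=\{\sum_{\nu}e_{\nu}x_{\nu}=0\}$ under the composition of the coordinate projection killing the $\upsigma$-slot (injective on $H$ precisely because $e_{\upsigma}\neq 0$, i.e.\ by the product formula) with the diagonal rescaling by $(\log|\upsigma_{i}(u)|)^{-1}$ (invertible because $u$ is a PV unit, so $|\upsigma_{i}(u)|<1$). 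Your argument delivers rank and discreteness in one stroke and makes transparent the footnote's remark that the lemma is Dirichlet up to a change of logarithm base; the paper's argument has the merit of being self-contained on the discreteness side and of displaying explicitly the compactness mechanism (annuli uniformly away from $0$ and $\infty$) that reappears elsewhere in the text. Both are valid; just make sure, as you do, that ${\bf x}$ is well defined on $\mathcal{O}_K^{\times}/\upmu_K$ because roots of unity have absolute value $1$ at every place.
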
 
 
 \begin{proof}[D\'{e}monstration]  Le rang de $\Upupsilon$ comme groupe abelien est $r+s-1$ ; reste à 
 montrer que $\Upupsilon$ est discret.  Consid\`{e}rons le cube centr\'{e}e \`{a} l'origine
 $$C=\{{\bf x}:\; |x_{i}|\leq R\} \subset \R^{r+s-1}$$ 
 avec $R>0$ : il suffit de vérifier que $\Upupsilon\cap C$ est fini.  
 La préimage $D\subset \R_{+}^{r+s-1}$ de $C$ par l'inverse du logarithme est un produit : 
 $$D=\{ {\bf y}|\; r_{1}\leq |y_{i}|\leq r_{2}\}$$ 
 et donc  la préimage $\widetilde{D}$ de $D$ dans $K_{\upsigma'}$ pour la valeur absolue 
 $$|\cdot |:K_{\upsigma'}\rightarrow \R_{+}^{r+s-1}$$
 est un produit d'anneaux de dimensions 1 ou 2, uniformément éloignés de l'origine comme de l'infini. 
 On en conclut que tout \'{e}l\'{e}ment $\uppi'(\updelta )\in\uppi' (\mathcal{O}_{K}^{\times})\cap \widetilde{D}$ 
satisfait des inégalités  
$$0<c_{1}<|\updelta|< c_{2}$$
 avec $c_{1}, c_{2}$ des constants qui ne d\'{e}pendent que de $C$ (parce que $\updelta\cdot \prod \upsigma_{i}(\updelta )=\pm1$).  En particular, la préimage de $\uppi' (\mathcal{O}_{K}^{\times})\cap \widetilde{D}$ par $\uppi'$ 
 est contenue dans un  compact et contient donc un nombre fini d'\'{e}l\'{e}ments de 
 $\mathcal{O}_{K}^{\times}\subset\mathcal{O}_{K}$.  Ainsi $\Upupsilon\cap C$ est bien un ensemble fini. 
 \end{proof}

On note le tore quotient
 \[ \T_{\Upupsilon}:=\R^{r+s-1}/\Upupsilon. \]
Du fait de l'identit\'{e}
 \[    u_{i} \cdot \mathfrak{a}^{\boldsymbol +}_{\mathcal{O}_{K},{\bf x}} =\mathfrak{a}^{\boldsymbol +}_{\mathcal{O}_{K},{\bf x}+{\bf x}(u_{i})} ,
 \quad i=1,\dots ,r+s-1 ,\]
 l'ensemble
\begin{align}\label{defZ} {\sf Z} :=  \left\{ \mathfrak{a}^{\boldsymbol +}_{\mathcal{O}_{K},{\bf x}} \mod \text{${\sf P}^{\sf mod} $}( A)\right\}_{{\bf x}\in\T_{\Upupsilon}, \;\boldsymbol +}\end{align}
est un sous-monoïde de ${\sf Cl}^{\sf mod}( A)$.

\begin{theo}\label{suiteexacte} L'application
 \[  {\sf Cl}^{\sf mod}( A) \longrightarrow {\sf Cl}(K),\quad \mathfrak{a}\longmapsto \mathfrak{a} \mathcal{O}_{K}= \text{\rm le $\mathcal{O}_{K}$-id\'{e}al engendr\'{e} par $\mathfrak{a}$} ,\]
 est surjective, de noyau le sous-mono\"{\i}de ${\sf Z}$.
 \end{theo}
 
 \begin{proof}[D\'{e}monstration]  On commence pour montrer que l'application est bien un homomorphisme.
Soient $\mathfrak{a},\mathfrak{b}$ deux id\'{e}aux quasicristallins mod\`{e}les : il faut montrer que
 \begin{align}\label{epimorphisme}  (\mathfrak{a}\ast\mathfrak{b})\mathcal{O}_{K}=\left(\mathfrak{a}\mathcal{O}_{K}\ast\mathfrak{b }\mathcal{O}_{K}\right)  \mod \text{${\sf P}$}(K), \end{align}
 o\`{u} $ {\sf P}(K)$ désigne l'ensemble des id\'{e}aux fractionnaires principaux de $\mathcal{O}_{K}$.  
 En multipliant par des \'{e}l\'{e}ments appropri\'{e}s de $\mathcal{O}_{K}$,  on peut supposer sans perte de 
 g\'{e}n\'{e}ralit\'{e} que $\mathfrak{a},\mathfrak{b}$ sont des quasicristaux entiers i.e.\ des ensembles mod\`{e}les bas\'{e}s sur les id\'{e}aux entiers  $\mathfrak{A},\mathfrak{B}\subset\mathcal{O}_{K}$ avec 
 des fen\^{e}tres  paramétr\'{e}es par ${\bf x}, {\bf y} >0$. 
 Supposons pour plus de clart\'{e} que $\mathfrak{a}$ et $\mathfrak{b}$ sont tous deux g\'{e}n\'{e}riques ; 
 la preuve dans les autres cas est identique.  Soient alors $\upalpha_{1},\upalpha_{2},\dots$ resp.\ 
 $\upbeta_{1},\upbeta_{2},\dots$, resp.\  $\upomega_{1},\upomega_{2},\dots$ une liste de tous les \'{e}l\'{e}ments positifs de $\mathfrak{a}$, resp.\ $\mathfrak{b}$, resp.\ $\mathfrak{a}\ast\mathfrak{b}$.
 Un \'{e}l\'{e}ment de $(\mathfrak{a}\ast\mathfrak{b})\mathcal{O}_{K}$ est donc de la forme
 \[ \upeta =  \sum_{I} \upgamma_{I}\upomega_{I} ,\quad \upomega_{I}:= \prod_{i\in I}  \upomega_{i} , \]
 o\`{u} $I\subset \N$ est fini  et $\upgamma_{I}\in\mathcal{O}_{K}$ est nul pour presque tout $I\subset \N$.  
 Par définition de $\mathfrak{a}\ast\mathfrak{b}$ comme ensemble mod\`{e}le bas\'{e} sur le r\'{e}seau $\mathfrak{A}\ast\mathfrak{B}$, on a 
 \[ \upomega_{i}=\sum_{j} \upalpha_{ij}\upbeta_{ij},\quad  \upalpha_{ij} \in \mathfrak{A},\;\; \upbeta_{ij}\in\mathfrak{B},\quad |\uppi'(\omega_{i})|<|\uppi'(u)|^{-({\bf x}+{\bf y})}.\]  
 On note que $\upalpha_{ij}\in\mathfrak{A}$, resp.\ $\upbeta_{ij}\in\mathfrak{B}$ n'est pas forc\'{e}ment un \'{e}l\'{e}ment de $\mathfrak{a}$, resp.\ $\mathfrak{b}$ ; il est possible par exemple
 que $|\uppi'(\upalpha_{ij})|\not< |\uppi'(u)|^{-{\bf x}}$.  N\'{e}anmoins,  en multipliant  $\upeta$ par $1=u^{-2MN}u^{2MN}$, $M\geq |I|$ pour tout $I$ o\`{u} $\upgamma_{I}\not=0$, on obtient
 \begin{align*} \upeta &  =
 \sum_{I} u^{-2NM}\upgamma_{I}u^{2NM}\prod_{i\in I} \left(\sum_{j} \upalpha_{ij}\upbeta_{ij}\right)  \\
& = \sum_{I} u^{-2N|I|}\upgamma_{I}\prod_{i\in I} \left(\sum_{j} (u^{N}\upalpha_{ij})(u^{N}\upbeta_{ij})\right)  
 \end{align*}
 et pour $N$ suffisamment grand, $u^{N}\upalpha_{ij}\in\mathfrak{a}$ et $u^{N}\upbeta_{ij}\in\mathfrak{b}$.  
 On peut donc supposer sans perte de généralité 
 que $\upalpha_{ij}\in\mathfrak{a}$, $\upbeta_{ij}\in\mathfrak{b}$.  Puis
 \begin{align}
 \upeta &=  \sum_{I} \upgamma_{I}\prod_{i\in I}\left(\sum_{j} \upalpha_{ij}\upbeta_{ij}\right) \label{1ligne} \\
 &= \sum_{I}\upgamma_{I} \sum\left(\prod \upalpha_{ij}\prod \upbeta_{ij}\right) \label{2ligne}
 \end{align}
 o\`{u} les sommes de produits qui apparaissent à la ligne (\ref{2ligne}) proviennent du dévelop\-pement 
des produits de sommes de la ligne (\ref{1ligne}).  Il est alors clair 
que l'expression (\ref{2ligne}) appartient au produit d'id\'{e}aux $\mathfrak{a}\mathcal{O}_{K}\ast \mathfrak{b}\mathcal{O}_{K}$.  Autrement dit
 \[ (\mathfrak{a}\ast\mathfrak{b})\mathcal{O}_{K}\subset\left(\mathfrak{a}\mathcal{O}_{K}\ast\mathfrak{b }\mathcal{O}_{K}\right)  \mod \text{${\sf P}$}(K) . \]
Inversement soit $ \upeta\in \mathfrak{a}\mathcal{O}_{K}\ast \mathfrak{b}\mathcal{O}_{K}$; on peut \'{e}crire
 \[   \upeta = \sum_{k}\left[ \left( \sum_{I} \upgamma_{I,k}\upalpha_{I}\right)\cdot \left( \sum_{J}\upgamma_{J,k}\upbeta_{J}\right)\right],
  \quad \sum \upgamma_{I,k}\upalpha_{I} \in \mathfrak{a}\mathcal{O}_{K},\;\;  \sum\upgamma_{J,k}\upbeta_{J}\in\mathfrak{b}\mathcal{O}_{K} ,\]
  o\`{u} $I,J\subset \N$ sont finis et $\upalpha_{I}$, $\upbeta_{J}$ sont des produits d'\'{e}l\'{e}ments de $\mathfrak{a}$, $\mathfrak{b}$, $\upgamma_{I,k},\upgamma_{J,k}\in \mathcal{O}_{K}$.  En d\'{e}veloppant les produits, on obtient des $\mathcal{O}_{K}$ combinations linaires  d'expressions de la forme $\upalpha_{I}\upbeta_{J}\in 
  (\mathfrak{a}\ast\mathfrak{b}) \mathcal{O}_{K}$.  Alors, $\upeta\in (\mathfrak{a}\ast\mathfrak{b}) \mathcal{O}_{K}$ et on a prouv\'{e}  l'\'{e}galit\'{e} (\ref{epimorphisme}) i.e.\ que l'application
  du th\'{e}or\`{e}me est un homomorphisme. Si $\mathfrak{A}=(\upalpha,\upbeta)$ est un $\mathcal{O}_{K}$-id\'{e}al, l'id\'{e}al quasicristallin mod\`{e}le 
  \[ \mathfrak{a}=\mathfrak{a}_{\mathfrak{A},{\bf x}}, \quad
  |\uppi'(\upalpha)|, \;|\uppi'(\upbeta)|<|\uppi'(u)|^{-{\bf x}}\] satisfait $\mathfrak{A}=\mathfrak{a}\mathcal{O}_{K}$, et donc
   l'application est également surjective.
 Tout élément de ${\sf Z}$ est entier et contient une unit\'{e} (une puissance de $u$), d'où le fait que $\sf Z$ est
 contenu dans le noyau.  Inversement si 
 $\mathfrak{a} =\mathfrak{a}_{\mathfrak{A},{\bf x}}$ est dans le noyau, $\mathfrak{A}\in {\sf P}(K)$ 
 et il existe $\upgamma\in K$ tel que $\upgamma\mathfrak{a}=\mathfrak{a}_{\mathcal{O}_{K},{\bf y}}$. 
 \end{proof}
 

Dans ce qui suit nous munirons $ {\sf Cl}^{\sf mod}( A)$ d'une topologie d'ensemble de Cantor. 
Soit ${\sf I}_{\mathfrak{A}}^{\sf mod}(A)$ l'ensemble des id\'{e}aux quasicristallins de $A$ bas\'{e}s 
sur $\mathfrak{A}\subset K$,  $\mathcal{O}_{K}$-id\'{e}al fractionnaire :
 \[ {\sf I}_{\mathfrak{A}}^{\sf mod}(A) = \{ \mathfrak{a}=\mathcal{M} (\mathfrak{A}, D)\}\subset {\sf I}^{\sf mod}(A).\] 
 (Par le Lemma \ref{uniquerep}, il existe un seul $\mathfrak{A}$ tel que $\mathfrak{a}=\mathcal{M} (\mathfrak{A}, D)$.)
  On peut identifier chaque element $\mathfrak{a}$ de
 $  {\sf I}_{\mathfrak{A}}^{\sf mod}(A)$ avec sa fonction caract\'{e}ristique 
 \[ \upchi_{\mathfrak{a}}:\mathfrak{A}\longrightarrow \{ 0,1\},\quad  \upchi_{\mathfrak{a}}(\upalpha) = 1\Leftrightarrow \upalpha\in\mathfrak{a}. \] 
 On munit $ {\sf I}_{\mathfrak{A}}^{\sf mod}(A)$ de la topologie induite du plongement  $ {\sf I}_{\mathfrak{A}}^{\sf mod}(A)\hookrightarrow {\sf 2}^{\mathfrak{A}}$, o\`{u} ${\sf 2}=\{ 0,1\}$ et ${\sf 2}^{\mathfrak{A}}$ 
 a la topologie du produit.
Chaque inclusion $\mathfrak{A}\subset\mathfrak{B}$ induit un plongement ${\sf 2}^{\mathfrak{A}}\hookrightarrow {\sf 2}^{\mathfrak{B}}$ qui est un homeomorphisme sur son image, defini en prolongeant  $f:\mathfrak{A}\rightarrow {\sf 2}$ \`{a}
$\mathfrak{B}\setminus \mathfrak{A}$
par $0$.  Notons que par la demonstration du Lemme \ref{uniquerep}, l'image de ${\sf I}_{\mathfrak{A}}^{\sf mod}(A) $ en ${\sf 2}^{\mathfrak{B}}$ est disjoint de ${\sf I}_{\mathfrak{B}}^{\sf mod}(A) $.
C'est-à-dire, \[ {\sf I}^{\sf mod}(A)=\bigsqcup_{\mathfrak{A}} {\sf I}_{\mathfrak{A}}^{\sf mod}(A) \subset \lim_{\longrightarrow} {\sf 2}^{\mathfrak{A}}\] est
 topologis\'{e} comme union disjoint dans la limite inductive $\varinjlim {\sf 2}^{\mathfrak{A}}$.

\begin{prop}  L'action multiplicative de $K^{\times}$ sur ${\sf I}^{\sf mod}(A)$ induit une action bien-definie du quotient $K^{\times}/\upmu_{K}$ par homeomorphismes, qui est propre et discontinue.  
 La topologie quotient en ${\sf Cl}^{\sf mod}( A)$ est de Hausdorff.
 \end{prop}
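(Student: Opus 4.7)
Le plan proc\`ede en quatre \'etapes: (i) v\'erifier que la multiplication par $K^\times$ pr\'eserve ${\sf I}^{\sf mod}(A)$ et que $\upmu_K$ agit trivialement, (ii) montrer que l'action induite est par hom\'eomorphismes, (iii) \'etablir que l'action est propre et discontinue, et (iv) en d\'eduire la s\'eparation de Hausdorff du quotient. Pour (i), un calcul direct donnera $\upgamma \cdot \mathcal{M}(\mathfrak{A}, D_{\mathbf x}^{\boldsymbol +}(u)) = \mathcal{M}(\upgamma \mathfrak{A}, D_{\mathbf x + \mathbf x(\upgamma)}^{\boldsymbol{+}'}(u))$, la forme de la fen\^etre se conservant (le vecteur $\boldsymbol{+}'$ tenant compte des signes des $\upsigma_i(\upgamma)$), et pour $\upzeta \in \upmu_K$, on aura $\upzeta\mathfrak{A} = \mathfrak{A}$ car $\upzeta \in \mathcal{O}_K^\times$, tandis que $|\upsigma_i(\upzeta)| = 1$ assure $\uppi'(\upzeta) D = D$ par sym\'etrie des intervalles.

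Pour (ii), la bijection $m_\upgamma : \mathfrak{A} \to \upgamma \mathfrak{A}$ induira un hom\'eomorphisme ${\sf 2}^\mathfrak{A} \to {\sf 2}^{\upgamma \mathfrak{A}}$ par r\'eindexation des coordonn\'ees dans la topologie produit, se restreignant en hom\'eomorphisme ${\sf I}_\mathfrak{A}^{\sf mod}(A) \to {\sf I}_{\upgamma\mathfrak{A}}^{\sf mod}(A)$. Globalement l'action permutera les composantes de ${\sf I}^{\sf mod}(A) = \bigsqcup_\mathfrak{A} {\sf I}_\mathfrak{A}^{\sf mod}(A)$ en agissant par hom\'eomorphisme sur chacune.

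L'\'etape cruciale sera (iii). La topologie d'union disjointe impose d'abord qu'un compact $C$ soit contenu dans un nombre fini de composantes ${\sf I}_{\mathfrak{A}_i}^{\sf mod}(A)$. Le point d\'elicat, et l'obstacle principal du plan, sera de montrer que les param\`etres de fen\^etre $\mathbf x$ des id\'eaux de $C$ restent born\'es: une suite $\mathcal{M}(\mathfrak{A}, D_{\mathbf x_n}^{\boldsymbol +})$ avec $\mathbf x_n \to \pm\infty$ en une coordonn\'ee convergerait ponctuellement dans ${\sf 2}^\mathfrak{A}$ vers $1_\mathfrak{A}$ ou la fonction nulle, qui n'appartiennent pas \`a ${\sf I}_\mathfrak{A}^{\sf mod}(A)$, contredisant la compacit\'e. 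Une fois ce bornage acquis, la condition $\upgamma C \cap C \neq \emptyset$ forcera $\upgamma\mathfrak{A}_i = \mathfrak{A}_j$ pour certains $i,j$ (fixant $\upgamma \mathcal{O}_K = \mathfrak{A}_j \mathfrak{A}_i^{-1}$) ainsi que $\mathbf x(\upgamma) = \mathbf y - \mathbf x$ born\'e; \'ecrivant $\upgamma = u\upgamma_0$ avec $\upgamma_0$ fix\'e et $u \in \mathcal{O}_K^\times$, le Lemme \ref{DirLemma} assurera que $u$ modulo $\upmu_K$ reste dans un ensemble fini.

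Enfin pour (iv), la libert\'e de l'action (argument r\'eciproque de (i) via le Lemme \ref{uniquerep}: un stabilisateur $\upgamma$ fixe $\mathfrak{A}$ donc est dans $\mathcal{O}_K^\times$, et fixe $D$ donc $|\upsigma_i(\upgamma)| = 1$ pour tout $i \in \Upsigma'$; la relation de norme $\prod_i |\upsigma_i(\upgamma)| = |N(\upgamma)| = 1$ force alors $|\upsigma(\upgamma)| = 1$, d'o\`u $\upgamma \in \upmu_K$), combin\'ee \`a la propret\'e discontinue et au caract\`ere Hausdorff de ${\sf I}^{\sf mod}(A) \subset \varinjlim {\sf 2}^\mathfrak{A}$, entra\^inera la s\'eparation de Hausdorff du quotient par l'argument standard s\'eparant deux orbites par des ouverts satur\'es disjoints.
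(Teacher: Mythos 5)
Vos \'etapes (i), (ii) et l'argument de libert\'e de (iv) co\"{\i}ncident pour l'essentiel avec la d\'emonstration du texte (trivialit\'e de l'action de $\upmu_{K}$, absence de points fixes via $\upgamma\mathfrak{A}\not=\mathfrak{A}$ ou $|\upsigma_{i}(\upgamma)|\not=1$, action par hom\'eomorphismes par r\'eindexation). Pour (iii), en revanche, vous suivez une route diff\'erente : vous \'etablissez la propret\'e au sens des compacts (finitude de $\{\upgamma:\ \upgamma C\cap C\not=\emptyset\}$) en bornant les param\`etres de fen\^etre sur un compact, puis en combinant la finitude des classes et le Lemme \ref{DirLemma} ; le texte, lui, construit directement pour chaque point $\mathfrak{a}$ un voisinage errant $U$ (d\'efini par deux conditions $\upchi(\upalpha)=1$, $\upchi(\upbeta)=0$) tel que $\upgamma U\cap U=\emptyset$ pour tout $\upgamma$ non trivial, en partitionnant les unit\'es selon que $\upgamma D\subset D$ ou non et en invoquant le m\^eme lemme. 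Votre argument de bornage est essentiellement correct, \`a une retouche pr\`es : si certaines coordonn\'ees de ${\bf x}_{n}$ tendent vers $+\infty$ et d'autres vers $-\infty$, la limite ponctuelle n'est ni $1_{\mathfrak{A}}$ ni la fonction nulle, mais l'indicatrice d'un ensemble \`a fen\^etre d\'eg\'en\'er\'ee ; celui-ci n'est pas uniform\'ement discret, ce qui suffit encore \`a la contradiction.

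Le point faible est le passage de cette propret\'e ``par compacts'' aux conclusions de l'\'enonc\'e. L'argument standard que vous invoquez en (iv) (s\'eparation de deux orbites par des ouverts satur\'es) suppose la locale compacit\'e de ${\sf I}^{\sf mod}(A)$ : c'est elle qui fournit le voisinage compact auquel appliquer la finitude. Or ${\sf I}_{\mathfrak{A}}^{\sf mod}(A)$ n'est pas ferm\'e dans ${\sf 2}^{\mathfrak{A}}$ et, d\`es que $r+s-1\geq 2$, un voisinage de base (conditions sur un ensemble fini) ne borne pas les param\`etres de fen\^etre -- une condition $\upchi(\upbeta)=0$ ne minore qu'une coordonn\'ee non sp\'ecifi\'ee -- de sorte que l'existence de voisinages compacts n'est ni \'etablie ni \'evidente. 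Sans cela, la finitude sur les compacts n'entra\^{\i}ne ni la forme forte de discontinuit\'e propre que d\'emontre le texte (un voisinage $U$ avec $\upgamma U\cap U=\emptyset$ pour tout $\upgamma\not=1$), ni, par votre voie, le caract\`ere s\'epar\'e du quotient. Il faudrait donc soit prouver la locale compacit\'e, soit proc\'eder comme le texte en construisant le voisinage errant \`a la main ; notez qu'en toute rigueur m\^eme cette forme forte ne livre pas automatiquement la s\'eparation du quotient (il faut encore s\'eparer deux orbites distinctes), point que la d\'emonstration du texte laisse elle aussi implicite.
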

 
 \begin{proof}[D\'{e}monstration]
 L'action de $\upmu_{K}$ laisse invariante chaque $\mathcal{O}_{K}$-idéal $\mathfrak{A}$ et
 chaque fenêtre $D$, donc sa action sur ${\sf I}^{\sf mod}(A)$ est triviale.
Chaque $\upgamma\in K^{\times}/\upmu_{K}$ agit sans des points fixes:  en effet, si $\upgamma\not\in\mathcal{O}^{\times}_{K}/\upmu_{K}$,
il est claire parce que $\upgamma\mathfrak{A}\not=\mathfrak{A}$ pour chaque $\mathcal{O}_{K}$-idéal; si $\upgamma\in\mathcal{O}^{\times}_{K}/\upmu_{K}$,  $\upgamma\not\in\upmu_{K}$, chaque fenêtre $D$ satisfait $\overline{\upgamma D}\not=\overline{D}$.  Alors, $\upgamma$ definit une bijection de ${\sf I}^{\sf mod}(A)$ qui respecte la topologie du dernier. 

Soit $\mathfrak{a}\in {\sf I}_{\mathfrak{A}}^{\sf mod}(A)$ defini par la fenêtre $D$.   Soit \[ X=\{ \upgamma \in  \mathcal{O}^{\times}_{K}/\upmu_{K}|\; \upgamma D\subset D\},\quad X^{\complement} =  \left( \mathcal{O}^{\times}_{K}/\upmu_{K}\right) \setminus X. \] Par le Lemma \ref{DirLemma}, l'action multiplicative de $\uppi'( \mathcal{O}^{\times}_{K})$ n'a pas des points d'accumulation.  Donc, il existent
\begin{itemize}
\item $\upalpha\in\mathfrak{a}$ tel que $\uppi'(\upalpha )\not\in\bigcup_{\upgamma\in X} \upgamma D$.
\item $\upbeta\not\in\mathfrak{a}$ tel que $\uppi'(\upbeta )\in\bigcap_{\upgamma\in X^{\complement}} \upgamma D$
\end{itemize}
On définit l'ouvert $U\ni \mathfrak{a}$
de fonctions caractéristiques $\upchi$ qui satisfont
\begin{enumerate}
\item[1.] $\upchi (\upalpha )=1$.
\item[2.]  $\upchi\in  {\sf I}_{\mathfrak{A}}^{\sf mod}(A)$ i.e.\ $\upchi|_{K\setminus \mathfrak{A}}\equiv 0$ et en particulier, $\upchi (\upbeta )=0$.
\end{enumerate}
 Si $\upgamma\not\in \mathcal{O}_{K}^{\times}/\upmu_{K}$, $\upgamma U\subset  {\sf I}_{\upgamma \mathfrak{A}}^{\sf mod}(A)$ qui a l'intersection nulle avec $U\subset  {\sf I}_{\mathfrak{A}}^{\sf mod}(A)$.
Si $\upgamma\in X$, $\upchi_{\upgamma\mathfrak{a}}(\upalpha )=0$ i.e.\ $\upgamma U\cap U$ est nulle; de mode pareil, si $\upgamma\in X^{\complement}$,  $\upchi_{\upgamma\mathfrak{a}}(\upbeta )=1$ et
$\upgamma U\cap U$ est aussi nulle.   On conclut que l'action est propre et discontinue.
 \end{proof}

 \begin{note}\label{codages} Lorsque $K/\Q$ est quadratique et r\'{e}elle, $\mathfrak{a}=\mathfrak{a}_{\mathfrak{A},{\bf x}}^{\boldsymbol +}$, ${\bf x}\geq \boldsymbol 0$ et $\mathfrak{A}\subset\mathcal{O}_{K}=\Z[\uptheta]$, $\uptheta$ une unit\'{e} fondamentale,
 on peut identifier canoniquement $\upchi_{\mathfrak{a}}$ 
avec une {\em fonction de codage} \[ \overline{\upchi}_{\mathfrak{a}}:\N\longrightarrow \{ 0,1\}.\]
 Plus pr\'{e}cisement, $ \overline{\upchi}_{\mathfrak{a}}(a)=1$ si et seulement si il existe $b\in\N$ 
tel que $\upchi_{\mathfrak{a}}(a\uptheta +b)=1$.  La correspondance $\mathfrak{a}\mapsto \overline{\upchi}_{\mathfrak{a}}$ est injective sur ${\sf Z}$ car la projection $\upalpha = a\uptheta+b\rightarrow a$ 
est injective sur $\mathcal{O}_{K}$. 
\end{note}

Soit maintenant
\[ {\sf Z}_{0} := {\sf Z}\cap{\sf Cl}_{0}^{\sf mod}( A),\]
o\`{u} ${\sf Cl}_{0}^{\sf mod}( A)$ est d\'{e}fini dans (\ref{inversibles}). On a alors le résultat suivant :

 \begin{theo} ${\sf Cl}^{\sf mod}( A)$ est un mono\"{\i}de de Cantor\footnote{Un {\it mono\"{\i}de de Cantor} est un ensemble de Cantor qui est un mono\"{\i}de dont le produit est continue.} et ${\sf Cl}_{0}^{\sf mod}( A)$ est dense dans 
 ${\sf Cl}^{\sf mod}( A)$.  De plus le sous-groupe ${\sf Z}_{0}$ est dense dans ${\sf Z}$ et de rang topologique\footnote{Le {\it rang topologique} est le nombre minimal de générateurs d'un sous-monoïde dense.} 
 au plus $r+s-1$.  
 \end{theo}
 
 \begin{proof}[D\'{e}monstration]  Par le Th\'{e}or\`{e}me \ref{suiteexacte}, pour prouver que  ${\sf Cl}^{\sf mod}( A)$ est 
 de Cantor,  il suffit de montrer que c'est le cas de ${\sf Z}$. On note d'abord que comme espace topologique
 ${\sf Z}$ s'identifie homeomorphiquement avec le sous espace de ${\sf I}_{\mathcal{O}_{K}}^{\sf mod}(A)\subset {\sf 2}^{\mathcal{O}_{K}}$ d\'{e}finit par les fonctions $\upchi_{\mathfrak{a}^{\boldsymbol +}_{\bf x}}$, ${\bf x}\in \T_{\Upupsilon}$, (voir 
 (\ref{defZ})). 
 On commence pour prouver que ${\sf Z}$ est parfait : si $\mathfrak{a}_{\bf x}=\mathfrak{a}_{\mathcal{O}_{K}, {\bf x}}\in {\sf Z}$ est g\'{e}n\'{e}rique,
alors $\mathfrak{a}_{\bf x}$ est une limite à gauche :
\[ \lim_{{\bf y}>{\bf x}} \mathfrak{a}_{\bf y}=\mathfrak{a}_{\bf x}.\]  
En effet, si ${\bf y}>{\bf x}$, $\mathfrak{a}_{\bf y}\subset \mathfrak{a}_{\bf x}$ et par g\'{e}n\'{e}ricit\'{e},
tout $\upalpha\in \mathfrak{a}_{\bf x}$ appartient à $\mathfrak{a}_{\bf y}$ si ${\bf y}$ est assez proche \`{a} ${\bf x}$,  
ce qui implique que 
$\upchi_{\mathfrak{a}_{\bf y}}\rightarrow \upchi_{\mathfrak{a}_{\bf x}}$ dans la topologie faible (de Tychonoff).
Inversement, si $\overline{\mathfrak{a}}_{\bf x}$ est ferm\'{e}, on a $\mathfrak{a}_{\bf y}\supset \overline{\mathfrak{a}}_{\bf x}$ pour tout ${\bf y}<{\bf x}$ et
 \[ \lim_{{\bf y}<{\bf x}} \mathfrak{a}_{\bf y}=\overline{\mathfrak{a}}_{\bf x}.\] 
 Pour $\mathfrak{a}^{\boldsymbol +}_{\bf x}$ non g\'{e}n\'{e}rique g\'{e}n\'{e}ral, on consid\`{e}re ${\bf y}\rightarrow {\bf x}$ tel que $y_{i}>x_{i}$ si $+_{i}=0$
 et $y_{i}<x_{i}$ si $+_{i}=1$ puis $\lim \mathfrak{a}_{\bf y} =\mathfrak{a}^{\boldsymbol +}_{\bf x}$.  Le même genre d'argument montre que ${\sf Z}$ est fermé.
 L'ensemble ${\sf Z}$ étant parfait et fermé dans 
un ensemble de Cantor, il est lui aussi de Cantor. 
 L'ensemble ${\sf Z}_{0}$  est dense parce que l'ensemble des 
 param\`{e}tres ${\bf x}$ tels que $\uppi'(u)^{\bf x}\in \mathcal{O}_{K}$ est dense en $K_{\upsigma'}$
et d'après ce qui précède on peut trouver une suite d'éléments de ${\sf Z}_{0}$ qui converge vers 
n'importe quel element de ${\sf Z}$.
   Soient ${\bf x}_{1},\dots ,{\bf x}_{r+s-1}\in \T_{\Upupsilon}$ tels que \[ \uppi'(u)^{{\bf x}_{i}} =: \uppi'(\upalpha_{i})\in \uppi' \left( \mathcal{O}_{K}\setminus (\mathcal{O}^{\times}_{K}  \cup \Q)\right),\;\; i=1,\dots ,r+s-1,\]
   et tels que ${\bf x}_{1},\dots ,{\bf x}_{r+s-1}$ sont independants dans le sens 
   qu'il existe un choix de préimages $\tilde{\bf x}_{i}$ dans $\R^{r+s-1}$ qui forment une base ; c'est 
  possible, parce que l'ensemble  $ \uppi' \left( \mathcal{O}_{K}\setminus (\mathcal{O}^{\times}_{K}  \cup \Q) \right)$ 
  est dense en $ K_{\upsigma'}$. Observons qu'il n'existe {\it pas} de combination entière non triviale nulle
   \[ n_{1}{\bf x}_{1} +\cdots +n_{r+s-1}{\bf x}_{r+s-1}=0 \ ;\]
 en effet l'existence d'une telle relation impliquerait que $\prod\upalpha_{i}\in\mathcal{O}^{\times}_{K}$.  
 De plus, puisque il existe des préimages qui forment une base de $\R^{r+s-1}$, il suit
   que le groupe $T:=\langle {\bf x}_{1},\dots ,{\bf x}_{r+s-1}\rangle\subset \T_{\Upupsilon}$ est dense.
 Soient $\overline{\mathfrak{a}}_{{\bf x}_{1}},\dots ,\overline{\mathfrak{a}}_{{\bf x}_{r+s-1}}\in {\sf Z}_{0}$ 
 les id\'{e}aux quasicristallins ferm\'{e}s associ\'{e}s \`{a} ${\bf x}_{1},\dots ,{\bf x}_{r+s-1}$.
   Alors, d'après la densit\'{e} du groupe $T$ des param\`{e}tres et l'analyse de la convergence dans ${\sf Z}$
   en termes du param\`{e}tre ${\bf x}$ il suit que le groupe engendr\'{e} par les $\overline{\mathfrak{a}}_{{\bf x}_{i}}$
   est bien dense dans ${\sf Z}$. 
   
 \end{proof}
 

   


 \section{La fonction z\^{e}ta d'un id\'{e}al quasicristallin}\label{zeta}

Soit $\mathfrak{a}\subset\R$ un id\'{e}al quasicristallin de dimension 1; lorsque $\mathfrak{a}$ 
est un id\'{e}al mod\`{e}le nous le distinguerons en utilisant la notation 
\[ \mathfrak{m}=\mathcal{M}(\mathfrak{A}, D) = \mathcal{M}(K_{\upsigma}, K_{\upsigma'}, \mathfrak{A}, D)\subset\R, \] o\`{u} $D=D_{\bf x}^{\boldsymbol +}(u)\subset K_{\upsigma'}$ (voir \S\S \ref{anneauxqc}, \ref{monoides}
pour des pr\'{e}cisions).  
La {\em fonction z\^{e}ta} associée s'écrit
\[ \upzeta_{\mathfrak{a}}(s) = \sum_{0<\upalpha\in\mathfrak{a}} \upalpha^{-s}.\]
Dans cette section nous montrons l'existence d'un prolongement méromorphe 
de $\upzeta_{\mathfrak{a}}$ au plan tout entier. Nous donnons également, lorsque
$\mathfrak{a}=\mathfrak{m}$ est un id\'{e}al quasicristallin mod\`{e}le, 
une \'{e}quation fonctionnelle g\'{e}n\'{e}ralis\'{e}e pour $\upzeta_{\mathfrak{m}}$.
En vue de ce dernier objectif on consid\`{e}re plus g\'{e}n\'{e}ralement la situation suivante.  
Soit $f\in \mathcal{S}(K_{\upsigma'})$, l'espace de Schwartz de $K_{\upsigma'}$, 
avec $f$ paire i.e. telle que $f(-x)=f(x)$. 
 On d\'{e}finit les $f${\em -poids} sur $\mathfrak{m}$ (plus bas la dépendance par rapport à $f$ sera omise
de la notation) selon
\[  \upchi (\upalpha )=\upchi_{f} (\upalpha ) :=  f(\upalpha '),\quad \upalpha\in\mathfrak{m}, \;\; \upalpha':= \uppi'(\upalpha )\in D.\] 
La {\em fonction} L {\em quasicristalline} associ\'{e}e \`{a} $\mathfrak{m}$ et $\upchi$ est alors d\'efinie par
\[ L (\mathfrak{m},\upchi, s) := \sum_{0<\upalpha\in\mathfrak{m}} \frac{ \upchi (\upalpha)}{\upalpha^{s}}.\]

\begin{prop}\label{analyticcont} Soient $\mathfrak{a}$ un id\'{e}al quasicristallin de rang {\rm 1} et $\mathfrak{m}=\mathcal{M}(\mathfrak{A}, D)\subset\R$ un id\'{e}al quasicristallin mod\`{e}le avec poids $\upchi= \upchi_f$.   Les fonctions $\upzeta_{\mathfrak{a}}(s)$, $L(\mathfrak{m},\upchi, s)$ convergent uniform\'{e}ment sur tout compact du demi-plan ouvert ${\rm Re}(s)>1$.  
\end{prop}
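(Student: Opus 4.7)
The plan is to reduce both convergence statements to a linear bound on a counting function and then apply Abel summation, a standard Dirichlet series argument.

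The main step is to establish that for any quasicristal $\mathfrak{b} \subset \R$ --- in particular for $\mathfrak{a}$ and for $\mathfrak{m}$ --- the counting function $N_{\mathfrak{b}}(T) := \#\{\upalpha \in \mathfrak{b} : 0 < \upalpha \le T\}$ satisfies $N_{\mathfrak{b}}(T) \le CT$ for some constant $C > 0$ and all $T \ge 1$. In the model set case $\mathfrak{m} = \mathcal{M}(\mathfrak{A}, D)$, this reduces to a lattice-point count: $N_{\mathfrak{m}}(T)$ is bounded by the number of points of the lattice $\mathfrak{A} \subset K_\infty$ lying in the slab $[0,T] \times D$, whose volume grows linearly in $T$. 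For a general quasicristal $\mathfrak{a}$, one uses the inclusion $\mathfrak{a} \subset \mathcal{M}$ into some model set (recalled in Example \ref{ensmod} via Moody's work) and bounds $N_{\mathfrak{a}}$ by $N_{\mathcal{M}}$.

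The second step is Abel (partial) summation. Given the bound $N_{\mathfrak{b}}(T) \le CT$, expressing a partial sum in terms of $N_{\mathfrak{b}}$ and an integral gives the tail estimate
\[
\Bigl|\sum_{\substack{\upalpha \in \mathfrak{b} \\ \upalpha > T_0}} \upalpha^{-s}\Bigr| \le C\, T_0^{1-\mathrm{Re}(s)} \cdot \frac{\mathrm{Re}(s) + |s|}{\mathrm{Re}(s) - 1},
\]
which tends to $0$ as $T_0 \to \infty$, uniformly on any compact subset $K \subset \{\mathrm{Re}(s) > 1\}$ (there $\mathrm{Re}(s)$ is bounded below by some $1 + \updelta$ and $|s|$ stays bounded). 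This yields the uniform convergence of $\upzeta_{\mathfrak{a}}$ on compacts.

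The third step treats the L-function by a trivial weight bound. Since $f$ is Schwartz --- hence bounded --- and $\upalpha' \in D$, one has $|\upchi(\upalpha)| = |f(\upalpha')| \le \|f\|_\infty =: M$, so $|L(\mathfrak{m}, \upchi, s)| \le M \cdot \upzeta_{\mathfrak{m}}(\mathrm{Re}(s))$ and the statement for $L$ reduces to that for $\upzeta_{\mathfrak{m}}$. The only genuine obstacle in this plan is the counting bound $N_{\mathfrak{b}}(T) = O(T)$ for a general quasicristal; once the Moody embedding into a model set is invoked, this becomes a textbook lattice-point estimate, and everything downstream is standard.
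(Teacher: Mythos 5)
Your proof is correct and follows essentially the same route as the paper: the key input in both is that uniform discreteness forces the positive elements of the quasicristal to grow at least linearly (your counting bound $N_{\mathfrak{b}}(T)=O(T)$ is exactly the paper's estimate $\widetilde{\upalpha}_{n}\geq n$ after rescaling), after which uniform convergence on compacts of $\{\mathrm{Re}(s)>1\}$ follows by a standard Dirichlet-series argument, and the $L$-function is handled, as in the paper, by bounding the Schwartz weight by its sup norm. The only differences are cosmetic: the paper compares termwise with $C\,\upzeta(\mathrm{Re}(s))$ instead of performing Abel summation, and it obtains the counting bound directly from uniform discreteness (which is part of the Delaunay definition), so your detour through Moody's embedding into a model set and a lattice-point count, while valid, is not needed.
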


\begin{note} L'argument qui suit marchera pour n'importe quel ensemble de Delaunay $X\subset (0,\infty)$: c'est-a-dire la fonction zêta $\upzeta_{X}(s):=\sum_{x\in X}x^{-s}$ aussi 
converge uniform\'{e}ment sur tout compact du demi-plan ouvert ${\rm Re}(s)>1$.
\end{note}

\begin{proof}[D\'{e}monstration] Soient $0<\upalpha_{1}<\upalpha_{2}<\cdots$  les \'{e}l\'{e}ments 
positifs de $\mathfrak{a}$.  Comme celui-ci est uniform\'{e}ment discret, il existe
$r>0$ tel que $B_{r}(\upalpha_{n})\cap\mathfrak{a}=\{\upalpha_{n}\}$ pour tout $n\in\N$. 
Remplaçant  $\mathfrak{a}$ par
\[ \widetilde{\mathfrak{a}} = r^{-1}\left( \mathfrak{a}-\upalpha_{1}\right)+1, \]
on obtient un autre quasicristal dont les \'{e}l\'{e}ments positifs sont les 
$$\widetilde{\upalpha}_{n}:=r^{-1} (\upalpha_{n}-\upalpha_{1})+1$$
 o\`{u}  $\tilde{\upalpha}_{1}=1$ et $\tilde{\upalpha}_{n+1}-\tilde{\upalpha}_{n}>1$
pour tout $n$.  Alors il existe un constante $C>0$ tel que, pour tout $s$ tel que ${\rm Re}(s)\geq \upsigma >1$,
\[  |\upzeta_{\mathfrak{a}}( s)| \leq C \sum \frac{1}{\widetilde{\upalpha}_{n}^{{\rm Re}(s)}}\leq C \sum \frac{1}{n^{{\rm Re}(s)}} = C\upzeta ({\rm Re}(s)) .\]
On trouve bien que  $\upzeta_{\mathfrak{a}}( s)$ converge uniform\'{e}ment sur tout demi-plan 
de la forme ${\rm Re}(s)\geq \upsigma>1$, d'où le r\'{e}sultat.  Le cas de $L(\mathfrak{m},\upchi, s)$ est complètement analogue où l'on utilise $C\cdot\max_{\upalpha\in\mathfrak{m}} |f(\upalpha')|$ au lieu de $C$.
 \end{proof}

\begin{note} Il faut noter que $\upzeta_{\mathfrak{a}}$ n'est pas une {\em fonction z\^{e}ta au sens de Beurling}.
Voir \cite{DiZh} o\`{u} on introduit formellement les multiplicit\'{e}s qui correspondent aux factorisations distinctes 
d'\'{e}l\'{e}ments, pour avoir un produit eul\'{e}rien.  Du fait de l'absence de factorisation unique pour $\mathfrak{m}$, 
les fonctions z\^{e}ta et L consider\'{e}es ici ne s'écrivent pas en g\'{e}n\'{e}ral comme produits eul\'{e}riens.
\end{note}

\begin{prop}\label{contmer} $\upzeta_{\mathfrak{a}}(s)$ possède une extension méromorphe (unique)
sur tout $\C$, avec un seul p\^{o}le simple en $s=1$.
\end{prop}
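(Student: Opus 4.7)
The approach is via the Mellin representation. For $\Re(s)>1$, write
\[\upzeta_\mathfrak{a}(s)\,\Gamma(s) \;=\; \int_0^\infty t^{s-1}\Theta_\mathfrak{a}(t)\,dt,\qquad \Theta_\mathfrak{a}(t):=\sum_{0<\upalpha\in\mathfrak{a}}e^{-\upalpha t},\]
and split at $t=1$. The piece $\int_1^\infty t^{s-1}\Theta_\mathfrak{a}(t)\,dt$ is entire in $s$, since $\Theta_\mathfrak{a}(t)$ decays exponentially as $t\to\infty$ (the smallest positive element of $\mathfrak{a}$ dictates the rate). Thus the whole problem is the asymptotic analysis of $\Theta_\mathfrak{a}(t)$ as $t\to 0^+$.

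Since $\mathfrak{a}$ is a quasicrystal in $\R$, hence a Meyer set, its counting function $N_\mathfrak{a}(T):=\#\{\upalpha\in\mathfrak{a}: 0<\upalpha\le T\}$ admits a decomposition $N_\mathfrak{a}(T)=\rho T+R(T)$ with $\rho>0$ the density and $R$ bounded. An Abel summation then yields
\[\Theta_\mathfrak{a}(t) \;=\; t\int_0^\infty e^{-xt}N_\mathfrak{a}(x)\,dx \;=\; \frac{\rho}{t} \;+\; t\int_0^\infty e^{-xt}R(x)\,dx.\]
The second term is bounded on $(0,1]$, and this alone suffices to give a meromorphic continuation to the half-plane $\Re(s)>0$, with a single simple pole at $s=1$ of residue $\rho$ (the $\Gamma(s)^{-1}$ factor kills any spurious pole at $s=0$ coming from the boundary term).

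To extend to the entire complex plane, one needs a full asymptotic expansion of $\Theta_\mathfrak{a}(t)$ at the origin. The idea is to enclose $\mathfrak{a}$ in a model ideal $\mathfrak{m}=\mathcal{M}(\mathfrak{A},D)$ with $\mathfrak{A}\subset K_\infty$ a lattice, regularise the window indicator $\chi_D$ by a Schwartz function $f$, and apply Poisson summation on $\mathfrak{A}$ to
\[\sum_{\upalpha\in\mathfrak{A}}f(\upalpha')\,e^{-|\upalpha|t}.\]
The dual series indexed by the dual lattice $\mathfrak{A}^*$ has rapidly decaying terms in the dual variable, which, combined with a standard Mellin--Barnes argument, produces an expansion $\Theta_\mathfrak{a}(t)\sim \rho/t + \sum_{k\ge 0}c_k t^k$ as $t\to 0^+$. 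Fed into the Mellin formula, this yields meromorphic continuation to all of $\C$ with only the simple pole at $s=1$.

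The main obstacle is precisely this last step: the window $\chi_D$ is not Schwartz, so one must control the error introduced by the smoothing $f\to\chi_D$, and then transfer the resulting asymptotic from the model ideal $\mathfrak{m}$ back to $\mathfrak{a}$ itself, which may fail to be a model set. The first issue is addressed by a careful regularisation of the Fourier transform of the indicator of $D$ (a product of intervals); the second by an inclusion--exclusion argument exploiting the fact that any quasicrystal is squeezed between two model sets whose counting functions differ by an $O(1)$ term.
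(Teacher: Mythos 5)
Your reduction of the problem to the behaviour of $\Theta_{\mathfrak{a}}(t)=\sum_{0<\upalpha\in\mathfrak{a}}e^{-\upalpha t}$ as $t\to 0^{+}$ is fine, but the input you feed into it is not available: it is simply not true that a quasicrystal (Meyer set) in $\R$ has counting function $N_{\mathfrak{a}}(T)=\rho T+R(T)$ with $R$ bounded. For a general Meyer set the density $\rho$ need not even exist (for instance, the set of all even integers together with the odd integers lying in $\bigcup_{k}[2^{2k},2^{2k+1}]$ is relatively dense in $\Z$, hence an almost lattice, and $N(T)/T$ oscillates without a limit), and even for the model ideals $\mathfrak{a}_{\bf x}$ that are the paper's main examples the remainder is bounded only for very special windows (bounded remainder intervals, by Kesten's theorem); generically it is unbounded, of size $\log T$. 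So already your continuation to ${\rm Re}(s)>0$, and your residue claim, rest on hypotheses the statement does not provide. This is precisely where the paper's argument is different and cheaper: it never invokes a density. Defining the $\mathfrak{a}$-integer and fractional parts and using that $\{u\}_{\mathfrak{a}}\in[0,R]$ is bounded by relative denseness alone, a Riemann--Stieltjes summation by parts gives
\[ \upzeta_{\mathfrak{a}}(s)=\sum_{0<\upalpha\le x}\upalpha^{-s}+\frac{x^{1-s}}{s-1}+\{x\}_{\mathfrak{a}}\,x^{-s}-s\int_{x}^{\infty}\{u\}_{\mathfrak{a}}\,u^{-s-1}\,du, \]
which is meromorphic on ${\rm Re}(s)>0$ with the single simple pole at $s=1$, and the continuation is then pushed further by repeated integration by parts.

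The second, and larger, gap is the passage to all of $\C$, which in your write-up is a plan rather than a proof. To move the Mellin argument past ${\rm Re}(s)\le 0$ you need a full asymptotic expansion $\Theta_{\mathfrak{a}}(t)\sim \rho/t+\sum_{k\ge 0}c_{k}t^{k}$, i.e.\ arbitrarily accurate error terms for $N_{\mathfrak{a}}$, and neither of the devices you invoke can deliver it. Smoothing the window and applying Poisson summation concerns the auxiliary model ideal $\mathfrak{m}$, not $\mathfrak{a}$ itself (and the test function $e^{-|x|t}$ is not smooth at the origin nor summable over the whole lattice once you drop the positivity restriction --- this is exactly why the paper's functional-equation section works with Gaussians $e^{-\uppi\upalpha^{2}t}$). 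The asserted squeezing of $\mathfrak{a}$ between two model sets whose counting functions differ by $O(1)$ is not a fact: a Meyer set is merely a relatively dense subset of a model set, and two model sets enclosing it will in general have different densities, so their counting functions differ by $O(T)$. Finally, even granting an $O(1)$ control of $N_{\mathfrak{a}}$, that only controls $\Theta_{\mathfrak{a}}(t)$ up to $O(1)$ as $t\to0^{+}$, which yields nothing beyond ${\rm Re}(s)>0$. If you want to keep the theta route you must actually prove the higher-order terms of the expansion (equivalently, the higher-order analogues of the boundedness of $\{u\}_{\mathfrak{a}}$ used in the paper's iterated integration by parts); that is the real content of the proposition and it is missing from your argument.
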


\begin{proof}[D\'{e}monstration] On utilise la m\'{e}thode usuelle de l'int\'{e}grale de Riemann-Stieltjes (voir par exemple \cite{MV}, Theorem 1.12).  
Soit $\upalpha_{1}<\upalpha_{2}<\cdots$ comme dans la démonstration de Proposition \ref{analyticcont}.  
En écrivant $\upalpha_{0}:=0$, $\upalpha_{-m}:=-\upalpha_{m}$, $m\in\N$, on définit la $\mathfrak{a}$-{\em  partie entière} comme 
$[x]_{\mathfrak{a}}:=\upalpha_{m}$ si $\upalpha_{m}\leq x\leq \upalpha_{m+1}$ et la $\mathfrak{a}$ -{\em partie fractionnaire} comme
$\{ x\}_{\mathfrak{a}}:=x-[x]_{\mathfrak{a}}\in [0,R] $ o\`{u} $\mathfrak{a}$ est relativement dense par rapport \`{a} $R>0$.  Alors
\begin{align} \upzeta_{\mathfrak{a}}(s) = \sum_{0<\upalpha \leq x} \upalpha^{-s} + \sum_{\upalpha > x} \upalpha^{-s} & = \sum_{0<\upalpha \leq x} \upalpha^{-s} + \int_{x}^{\infty} u^{-s}d[u]_{\mathfrak{a}} \nonumber \\
&=  \sum_{0<\upalpha \leq x} \upalpha^{-s} + \int_{x}^{\infty} u^{-s}du - \int_{x}^{\infty} u^{-s}d\{ u\}_{\mathfrak{a}} \nonumber  \\ 
&=  \sum_{0<\upalpha \leq x} \upalpha^{-s} +\frac{x^{1-s}}{s-1} + \{ x\}_{\mathfrak{a}}x^{-s} + \int_{x}^{\infty} \{ u\}_{\mathfrak{a}} du^{-s} \label{intpartes} \\
&=  \sum_{0<\upalpha \leq x} \upalpha^{-s} +\frac{x^{1-s}}{s-1} + \{ x\}_{\mathfrak{a}}x^{-s} -s \int_{x}^{\infty} \{ u\}_{\mathfrak{a}} u^{-s-1} du, \label{expressionintegrale}
\end{align}
o\`{u} l'\'{e}galit\'{e} (\ref{intpartes}) provient d'une intégration par parties dans l'intégrale de Riemann-Stieltjes (voir Theorem A.3 de \cite{MV}).
La derni\`{e}re expression (\ref{expressionintegrale}) est méromorphe pour ${\rm Re}(s)>0$, avec un seul p\^{o}le simple en $s=1$. (Ici, on note que l'intégrale
en (\ref{expressionintegrale}) converge uniformément dans les compactes du demi-plan ${\rm Re}(s)>0$ et sa intégrande est holomorphe, qui impliquent que l'integral est aussi holomorphe dans le demi-plan.) 
 En intégrant par parties de fa\c{c}on r\'{e}p\'{e}t\'{e}e, on obtient le prolongement souhait\'{e}.
\end{proof}

\begin{note} Grâce \`{a} la Proposition \ref{contmer} on peut d\'{e}finir les $\mathfrak{a}$-{\em nombres de Bernoulli} par 
 \[ B_{\mathfrak{a},n}=\upzeta_{\mathfrak{a}}(-n),\quad n \ \text{ impair}.\]
 \end{note}

On suppose maintenant que $\mathfrak{a}=\mathfrak{m}=\mathcal{M}(\mathfrak{A}, D)\subset\R$ est un id\'{e}al quasicristallin mod\`{e}le ; le reste de cette section s'est consacr\'{e} \`{a} la d\'{e}monstration d'une  
\'{e}quation fonctionnelle pour $\upzeta_{\mathfrak{m}}$.  Pour commencer, il sera nécessaire d'introduire
un peu de terminologie supplémentaire.

D'après Meyer (\cite{Meyer}), l'{\em id\'{e}al quasicristallin mod\`{e}le dual}   
est l'id\'{e}al mod\`{e}le fractionnaire 
\[  \mathfrak{m}^{\vee} =\mathcal{M}[  \mathfrak{A}^{\vee} ,D^{\vee}]\]
associ\'{e} au réseau dual

\[ \mathfrak{A}^{\vee} = \{\upalpha\in K|\; {\rm Tr}(\upalpha\mathfrak{A})\subset \Z\}\]
et à la fen\^{e}tre 
\[ D^{\vee} := (\uppi/3)\cdot \{{\bf x}|\; |{\bf x}\cdot{\bf y}|\leq 1,\; \forall {\bf y}\in D \}   ,\]
qui est un multiple d'ensemble convexe dual de $D$.
Pour tout $n\in\N$, on aura besoin de consid\'{e}rer aussi les id\'{e}aux mod\`{e}les
\[  \mathfrak{m}^{\vee}_{n} := \mathcal{M}(\mathfrak{A}^{\vee},n\cdot D^{\vee})   \]
de sorte que
 \[ \mathfrak{m}^{\vee}=\mathfrak{m}^{\vee}_{1}\subset\mathfrak{m}^{\vee}_{2}\subset\cdots\quad\text{et}\quad \bigcup \mathfrak{m}^{\vee}_{n}= \mathfrak{A}^{\vee}.\]
 
Soit maintenant $\updelta (x)$ la distribution de Dirac ($\delta(f)=f(0)$ pour une fonction $f$ lisse
au voisinage de l'origine) et consid\'{e}rons le {\em peigne de Dirac}
\begin{align}\label{PdeD} \upmu_{\mathfrak{m}}(x) = \sum_{\upalpha\in\mathfrak{m}} \updelta(x-\upalpha) . \end{align}
De plus, soit
\[  \upmu_{\mathfrak{m},\upchi}(x) =\sum_{\upalpha\in\mathfrak{m}} \upchi (\upalpha )\updelta (x-\upalpha)\]
le peigne pondéré associ\'{e} \`{a} $\upchi$.
 Le résultat suivant ({\em formule de Poisson-Meyer}) est formul\'{e} dans \cite{Meyer} avec r\'{e}f\'{e}rence \`{a} 
 \cite{Meyer1}, \cite{Meyer2} pour les preuves. Pour plus de clart\'{e} nous en
donnons ci-dessous une d\'{e}monstration dans notre contexte.

\begin{theo}[Meyer \cite{Meyer2}, \S V.7.3]\label{MeyerPoisson} La transformation de Fourier de $\upmu= \upmu_{\mathfrak{m},\upchi}$ au sens des distributions est donnée par le {\rm peigne de Dirac généralisé}
\begin{align}    \upmu^{\vee}(x) & =  
 \sum_{\upxi\in\mathfrak{A}^{\vee}} \upchi^{\vee}(\upxi ) \updelta (x-\upxi) 
 := \sum_{n=1}^{\infty} \upnu_{n}(x) \label{sommepeines} 
 \end{align}
 o\`{u} 
 \begin{enumerate}
\item[a.] la fonction $\upchi^{\vee}$ est d\'{e}finie par
\[ \upchi^{\vee}(\upxi ) := \frac{2\uppi }{\sqrt{|{\rm disc}(\mathfrak{A})|}}\cdot f^{\vee}(\upxi'),\] avec $f^{\vee}$ le dual de Fourier de $f$ et ${\rm disc}(\mathfrak{A})$ le discriminant de $\mathfrak{A}$ ;
\item[b.] pour chaque $n\in\N$, $\upnu_{n}$ est un peigne pondéré 
 \[ \upnu_{n}(x) = \sum_{\upxi\in \Updelta \mathfrak{m}^{\vee}_{n}} \upchi^{\vee}(\upxi) \updelta (x-\upxi ) \]
 avec support $\Updelta\mathfrak{m}^{\vee}_{1}:=\mathfrak{m}^{\vee}$ quand $n=1$ et
 \[\Updelta\mathfrak{m}^{\vee}_{n}:= \mathfrak{m}_{n}^{\vee}-\mathfrak{m}_{n-1}^{\vee}  = \mathcal{M}\left(\mathfrak{A}^{\vee}, nD^{\vee}-(n-1)D^{\vee}\right)\]
 pour $n\geq 2$ ;
 \item[c.] la convergence de la deuxi\`{e}me série dans {\rm (\ref{sommepeines})}
est tr\`{e}s rapide :
pour tous $N\in\N$ et $R\in\R_{+}$
\begin{align}\label{densityestimate}  \int_{a-R}^{a+R}d|\upnu_{n}| (x)= O(n^{-N})\quad \text{quand $n\rightarrow\infty$}, \end{align}
uniformement en $a$.
\end{enumerate}
\end{theo}

\begin{proof}[D\'{e}monstration] Le poids $\upchi$ se prolonge \`{a} tout $\mathcal{O}_{K}$ par $\upchi (\upalpha )= f(\upalpha')$.  
Alors, pour toute fonction test $g\in \mathcal{S}(K_{\upsigma})$, on peut \'{e}crire
\[ \int_{\R}g(x) \, d\upmu (x) = \sum_{\upalpha\in\mathfrak{m}} f(\upalpha')g(\upalpha ). \]
La fonction en $K_{\infty}$
\[ G(x,y) = f(y)g(x)\]
d\'{e}croît rapidement \`{a} l'infini et sa transform\'{e}e de Fourier 
$G^{\vee}(\upxi,\upeta ) =f^{\vee}(\upxi )g^{\vee}(\upeta )$ est également à d\'{e}croissance rapide \`{a} l'infini.
Appliquant \`{a} $G$ la formule de Poisson classique par rapport au r\'{e}seau $\mathfrak{A}\subset K_{\infty}$
on obtient :
\[  \sum_{\upalpha\in\mathfrak{m}} f(\upalpha')g(\upalpha )= \sum_{\upalpha\in\mathfrak{A}} f(\upalpha ')g(\upalpha )=\frac{2\uppi}{\sqrt{{\rm disc}(\mathfrak{A})}}  
\sum_{\upxi\in\mathfrak{A}^{\vee}} f^{\vee}(\upxi')g^{\vee}(\upxi ),\]
qui fournit la premi\`{e}re egalit\'{e} dans (\ref{sommepeines}).
La fonction $f^{\vee}$ étant aussi de la classe de Schwartz, l'assertion c) est imm\'{e}diate, sauf l'ind\'{e}pendence 
par rapport à $a$.  
Mais  les ensembles mod\`{e}les $\Updelta\mathfrak{m}_{n}^{\vee}$,  
utilis\'{e}s dans la définition des $\upnu_{n} $, font intervenir des fen\^{e}tres du volume 
\[ {\rm Vol}\left(nD^{\vee}-(n-1)D^{\vee}\right) = O(n^{r+s-2}). \]
On peut faire une partition de $nD^{\vee}-(n-1)D^{\vee}$ en $n^{r+s-2}$ parties de volume uniformément borné par un constant independent de $n$.
Puis on obtient pour chaque $n$ les partitions
\[  \Updelta\mathfrak{m}_{n}=\bigsqcup_{i=1}^{n^{r+s-2}} \Updelta\mathfrak{m}_{n,i},\quad |\upnu_{n}| = \sum_{i=1}^{n^{r+s-2}} |\upnu_{n,i}|\]
Les quasicristaux $ \Updelta\mathfrak{m}_{n,i}$  (voir \cite{Meyer}, \cite{Hof})  sont uniform\'{e}ment discrets avec une constante $r>0$
qui ne d\'{e}pend pas de $n,i$.   Mais les poids ont décroissance rapide en $\infty$ qui implique le décroissance en (\ref{densityestimate}) et qu'on peut choisir la constante implicite dans (\ref{densityestimate}) 
ind\'{e}pendamment de $a$. 
\end{proof}



Nous nous proposons d'appliquer le Th\'{e}or\`{e}me \ref{MeyerPoisson} \`{a} l'\'{e}tude des fonctions L
associ\'{e}es \`{a} $\Updelta\mathfrak{m}^{\vee}_{n}$ et $\upchi^{\vee}$:  
\[  L^{\vee}_{n} (\mathfrak{m},\upchi, s) :=L (\Updelta\mathfrak{m}_{n}^{\vee},\upchi^{\vee}, s)= \sum_{0<\upbeta\in\Updelta\mathfrak{m}_{n}^{\vee}} \frac{ \upchi^{\vee} (\upbeta)}{\upbeta^{s}}.\]

\begin{prop}\label{Lconverge} La suite de fonctions $L^{\vee}_{n} (\mathfrak{m},\upchi, s)$ convergent uniform\'{e}ment 
vers z\'{e}ro sur tout compact du demi-plan ${\rm Re}(s)>1$.  De plus la somme
\begin{align}\label{Lsomme}  \sum_{n=1}^{\infty} L^{\vee}_{n} (\mathfrak{m},\upchi, s)\end{align}
converge uniform\'{e}ment  sur un domaine du même type et d\'{e}finit donc
une fonction holomorphe pour ${\rm Re}(s)>1$.
\end{prop}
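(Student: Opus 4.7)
The plan is to deduce the uniform convergence directly from the rapid-decay estimate (c) of Theorem \ref{MeyerPoisson}, combined with a partition of the positive half-line into bounded intervals. First I would fix a compact $K\subset\{s\in\C:\mathrm{Re}(s)>1\}$ and set $\upsigma_{0}=\min_{s\in K}\mathrm{Re}(s)>1$, so that $|\upbeta^{-s}|\leq \upbeta^{-\upsigma_{0}}$ for every positive real $\upbeta$ and every $s\in K$. It then suffices to bound the weighted sum
\[ \sum_{0<\upbeta\in\Updelta\mathfrak{m}_{n}^{\vee}} \frac{|\upchi^{\vee}(\upbeta)|}{\upbeta^{\upsigma_{0}}} \]
by a quantity which decays rapidly in $n$ and is independent of $s\in K$.

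Next, since $\Updelta\mathfrak{m}_{n}^{\vee}\subset\mathfrak{A}^{\vee}$ and $\mathfrak{A}^{\vee}$ is a lattice in $K_{\infty}$, its positive elements admit a common lower bound $\upbeta_{0}>0$ independent of $n$. I would partition $[\upbeta_{0},\infty)$ into consecutive intervals $I_{k}=[\upbeta_{0}+kR,\upbeta_{0}+(k+1)R)$ of fixed length $R>0$, and apply (\ref{densityestimate}) in the form
\[ \sum_{\upbeta\in I_{k}\cap\Updelta\mathfrak{m}_{n}^{\vee}}|\upchi^{\vee}(\upbeta)|=\int_{I_{k}}d|\upnu_{n}|(x)=O(n^{-N}), \]
where the implicit constant is independent of $k$; this spatial uniformity is exactly the content of the phrase ``uniform\'ement en $a$'' in Theorem \ref{MeyerPoisson}(c), and is the crucial point of the whole argument.

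Combining these two inputs with the trivial bound $\upbeta^{-\upsigma_{0}}\leq (\upbeta_{0}+kR)^{-\upsigma_{0}}$ for $\upbeta\in I_{k}$, one gets
\[ |L^{\vee}_{n}(\mathfrak{m},\upchi,s)|\leq C_{N}\, n^{-N}\sum_{k=0}^{\infty}(\upbeta_{0}+kR)^{-\upsigma_{0}}, \]
where the last sum converges precisely because $\upsigma_{0}>1$. Taking $N=2$ yields $|L^{\vee}_{n}(\mathfrak{m},\upchi,s)|=O(n^{-2})$ uniformly on $K$, which gives both the uniform convergence of $L^{\vee}_{n}$ to zero and the uniform absolute convergence of the series $\sum_{n}L^{\vee}_{n}(\mathfrak{m},\upchi,s)$ on every compact of $\{\mathrm{Re}(s)>1\}$. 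The holomorphy of the sum follows by Weierstrass's theorem, each $L^{\vee}_{n}$ being holomorphic on that half-plane by the same Riemann--Stieltjes argument as in Proposition \ref{analyticcont}.

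There is no real obstacle here: the argument amounts to a comparison with a tail of the Riemann zeta function, and its only delicate ingredient is the uniformity in $k$ of the Meyer--Poisson estimate, without which one could not sum over all intervals simultaneously. The mild volume growth $O(n^{r+s-2})$ of the annular windows $nD^{\vee}-(n-1)D^{\vee}$ is completely dominated by the Schwartz decay of $f^{\vee}$, which is what produces the arbitrary-order polynomial decay $O(n^{-N})$ and thus powers the whole estimate.
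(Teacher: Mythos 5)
Your overall strategy---deducing the estimate from part (c) of Theorem \ref{MeyerPoisson} by cutting the positive half-line into intervals of fixed length and comparing with $\sum_{k} k^{-\upsigma_{0}}$---is a legitimate alternative to the paper's route, which instead bounds $|L^{\vee}_{n}(\mathfrak{m},\upchi,s)|$ by an $n$-independent constant times $\max_{\upbeta\in\Updelta\mathfrak{m}^{\vee}_{n}}|f^{\vee}(\upbeta')|$ (uniform discreteness of the $\Updelta\mathfrak{m}^{\vee}_{n}$ plus the comparison argument of Proposition \ref{analyticcont}) and lets the Schwartz decay of $f^{\vee}$ on the annular windows produce the decay in $n$. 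However, one step of your proof fails as written: the claim that the positive elements of $\Updelta\mathfrak{m}^{\vee}_{n}$ admit a lower bound $\upbeta_{0}>0$ \emph{independent of $n$} ``because $\mathfrak{A}^{\vee}$ is a lattice in $K_{\infty}$''. The lattice property lives in $K_{\infty}\cong\R^{r}\times\C^{s}$; the quantity $\upbeta$ entering $\upbeta^{-s}$ is the image of $\upxi\in\mathfrak{A}^{\vee}$ under the single distinguished embedding $\upsigma$, and this image is a finitely generated subgroup of $\R$ of rank $d>1$, hence dense: the positive elements of $\mathfrak{A}^{\vee}$ accumulate at $0$ and no uniform $\upbeta_{0}$ exists. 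For each fixed $n$ a positive lower bound does exist, but it degrades with $n$: a nonzero $\upxi\in\mathfrak{A}^{\vee}$ has $|N_{K/\Q}(\upxi)|$ bounded below, and since its conjugates lie in $nD^{\vee}$ one only gets $\upxi\gg n^{-(d-1)}$. Relatedly, your inequality $|\upbeta^{-s}|\leq\upbeta^{-\upsigma_{0}}$ holds only for $\upbeta\geq 1$; for $\upbeta<1$ one must use $\upbeta^{-C}$ with $C$ the maximum of ${\rm Re}(s)$ on your compact set.

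The gap is repairable precisely because (\ref{densityestimate}) holds for \emph{every} $N$: keep your partition, but start it at the true lower bound $c\,n^{-(d-1)}$, estimate the finitely many intervals meeting $(0,1)$ by $(c\,n^{-(d-1)})^{-C}\cdot O(n^{-N})$ and the remaining ones by $O(n^{-N})\sum_{k}k^{-\upsigma_{0}}$, and then choose $N>(d-1)C+2$; this yields $|L^{\vee}_{n}(\mathfrak{m},\upchi,s)|=O(n^{-2})$ uniformly on the compact, after which your conclusion (absolute uniform convergence of the series and holomorphy by Weierstrass) goes through. As it stands, though, the asserted $n$-independent lower bound and its justification are incorrect, so the key estimate is not yet established.
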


\begin{proof}[D\'{e}monstration]  Comme d\'{e}j\`{a}  observ\'{e} dans la preuve du Th\'{e}or\`{e}me \ref{MeyerPoisson}, 
les ensembles mod\`{e}les $\Updelta\mathfrak{m}_{n}^{\vee}$
sont uniform\'{e}ment discrets avec une constant $r>0$ indépendante de~$n$.   
Alors, pour $[c, C]\subset \R$, $c>1$, 
\[  | L^{\vee}_{n} (\mathfrak{m},\upchi, s)|\leq A\cdot \max_{\upbeta\in \Updelta \mathfrak{m}^{\vee}_{n}}|f^{\vee}(\upbeta' )| ,\quad {\rm Re}(s)\in [c,C],\]
o\`{u} $A$ est de nouveau une constante indépendante de $n$.  Ainsi puisque la transformée de Fourier $f^{\vee}$ 
de $f$ est dans l'espace de Schwartz, $L^{\vee}_{n} (\mathfrak{m},\upchi, s)$ converge uniform\'{e}ment vers 0 sur les 
compacts pour ${\rm Re}(s)>1$.  \`{A} cause de la d\'{e}croissance rapide de $f^{\vee}$ \`{a} l'infini, la somme 
(\ref{Lsomme}) converge de même.
\end{proof}

On note  \[ L^{\vee}(\mathfrak{m},\upchi,s) :=\sum_{n=1}^{\infty} L^{\vee}_{n} (\mathfrak{m},\upchi, s),\]
qu'on appelle {\em fonction L g\'{e}n\'{e}ralis\'{e}}.
Les fonctions normalis\'{e}s 
\[  \Uplambda  (\mathfrak{m},\upchi, s),\quad 
\Uplambda_{n}^{\vee}  (\mathfrak{m},\upchi, s), \quad \Uplambda^{\vee}  (\mathfrak{m},\upchi, s) \] 
se d\'{e}finissent  en multipliant chacune de $L (\mathfrak{m},\upchi, s)$,  $L^{\vee}_{n}(\mathfrak{m},\upchi,s) $
et $ L^{\vee}(\mathfrak{m},\upchi,s) $ par le facteur classique $ \uppi^{-s/2}\Upgamma (s/2)$.

 
 La {\em fonction th\^{e}ta quasicristalline} associ\'{e}e \`{a} $\mathfrak{m}$, $\upchi$ s'\'ecrit
\[  \uptheta_{\mathfrak{m},\upchi}(t) := \frac{1}{2}\upchi (0) + \sum_{0<\upalpha \in\mathfrak{m}}  \upchi (\upalpha ) e^{-\uppi \upalpha^{2}t} ,\]
et les {\em fonctions th\^{e}ta duales} de niveaux $ n=1,2,\dots $ sont définies par  
\[ \uptheta^{\vee}_{\mathfrak{m},\upchi,1}(t):=\frac{1}{2}\upchi^{\vee} (0)  + \sum_{0<\upbeta \in\mathfrak{m}^{\vee}}  \upchi^{\vee} (\upbeta ) e^{-\uppi \upbeta^{2}t} \]
pour $n=1$ et
\[\uptheta^{\vee}_{\mathfrak{m},\upchi,n}(t) :=  \sum_{0<\upbeta \in\Updelta\mathfrak{m}_{n}^{\vee}}  \upchi^{\vee} (\upbeta ) e^{-\uppi \upbeta^{2}t}
\]
pour $n\geq 2$.
Les quasicristaux $\Updelta\mathfrak{m}_{n}^{\vee}$, étant uniform\'{e}ment discrets avec une constante $r$ 
que l'on peut choisir ind\'{e}pendamment de $n$, ils d\'{e}finissent une suite 
$\{ \uptheta^{\vee}_{\mathfrak{m},\upchi,n}(t) \}$
qui converge uniform\'{e}ment sur tout compact de $(0,\infty)$.  Comme la restriction de $\upchi^{\vee}$ \`{a} $\Updelta\mathfrak{m}_{n}^{\vee}$
tend rapidement \`{a} z\'{e}ro, la limite de cette suite est nulle. 
De la m\^{e}me fa\c{c}on qu'à la Proposition \ref{Lconverge}, la somme
\[ \uptheta^{\vee}_{\mathfrak{m},\upchi}(t) :=\sum_{n=1}^{\infty} \uptheta^{\vee} _{\mathfrak{m},\upchi,n}(t)  \]
converge et d\'{e}finit une fonction lisse. 
\begin{lemm}\label{decroissancethetadual} La fonction 
\[ \uptheta^{\vee}_{\mathfrak{m},\upchi}(t) -\frac{\upchi^{\vee}(0)}{2} \]
est à d\'{e}croissance rapide \`{a} l'infini.
\end{lemm}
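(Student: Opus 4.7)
Posons
\[ F(t) \;:=\; \uptheta^{\vee}_{\mathfrak{m},\upchi}(t) - \frac{\upchi^{\vee}(0)}{2} \;=\; \sum_{n=1}^{\infty} \sum_{\substack{\upbeta \in \Updelta\mathfrak{m}_{n}^{\vee} \\ \upbeta > 0}} \upchi^{\vee}(\upbeta)\, e^{-\uppi \upbeta^{2} t}. \]
L'id\'ee directrice sera de montrer que $F(t)$ d\'ecro\^{\i}t en fait \emph{exponentiellement} quand $t\to\infty$, ce qui est bien plus fort que la d\'ecroissance rapide demand\'ee. Trois ingr\'edients d\'ej\`a \'etablis dans la d\'emonstration du Th\'eor\`eme~\ref{MeyerPoisson} seront cruciaux~: (i) chaque $\Updelta\mathfrak{m}_{n}^{\vee}$ est uniform\'ement discret avec une constante $r>0$ ind\'ependante de $n$~; (ii) le volume de la fen\^etre annulaire $nD^{\vee}-(n-1)D^{\vee}$ est $O(n^{r+s-2})$, ce qui majore uniform\'ement le cardinal de $\Updelta\mathfrak{m}_{n}^{\vee}$ dans tout intervalle born\'e de $\R$~; (iii) pour $n \geq 2$ et $\upbeta \in \Updelta\mathfrak{m}_{n}^{\vee}$, la non-appartenance $\upbeta'\notin (n-1)D^{\vee}$ force $|\upbeta'| \gtrsim n$ (puisque $D^{\vee}$ est un polytope convexe centr\'e sym\'etrique contenant $0$ dans son int\'erieur), et la d\'ecroissance de Schwartz de $f^{\vee}$ fournit alors $|\upchi^{\vee}(\upbeta)| = O(n^{-M})$ pour tout $M\in\N$.

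Je proc\'ederais en trois temps. Premi\`erement, pour $n\geq 1$ fix\'e, en regroupant les \'el\'ements positifs de $\Updelta\mathfrak{m}_{n}^{\vee}$ selon les intervalles $[kr,(k+1)r]$, les points (i) et (ii) conduiront \`a
\[ \sum_{\substack{\upbeta \in \Updelta\mathfrak{m}_{n}^{\vee} \\ \upbeta>0}} e^{-\uppi \upbeta^{2} t} \;\leq\; C\, n^{r+s-2}\sum_{k \geq 1} e^{-\uppi (kr)^{2} t} \;\leq\; C'\, n^{r+s-2}\, \frac{e^{-\uppi r^{2} t}}{1 - e^{-\uppi r^{2} t}}. \]
Deuxi\`emement, pour $n\geq 2$ l'injection de (iii) donnera, pour tout $t \geq 1$,
\[ \left| \sum_{\substack{\upbeta \in \Updelta\mathfrak{m}_{n}^{\vee} \\ \upbeta>0}} \upchi^{\vee}(\upbeta)\, e^{-\uppi \upbeta^{2} t} \right| \;\leq\; C_{M}\, n^{r+s-2-M}\, e^{-\uppi r^{2} t / 2}. \]
Le cas $n=1$ (o\`u $|\upchi^{\vee}|$ est simplement born\'e sur $\mathfrak{m}^{\vee}$, la fen\^etre $D^{\vee}$ \'etant born\'ee) sera trait\'e s\'epar\'ement et fournit d\'ej\`a une majoration du type $C\, e^{-\uppi r^{2} t/2}$. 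Troisi\`emement, le choix $M > r+s-1$ rendra convergente la s\'erie $\sum_{n\geq 2} n^{r+s-2-M}$, d'o\`u $|F(t)| \leq C\, e^{-\uppi r^{2} t/2}$ pour $t$ assez grand.

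L'obstacle principal consiste \`a rendre rigoureuses les estim\'ees uniformes en $n$, en particulier l'in\'egalit\'e $|\upbeta'|\gtrsim n$ pour $\upbeta \in \Updelta\mathfrak{m}_{n}^{\vee}$, $n\geq 2$. Celle-ci d\'ecoule de ce que $D^{\vee}$, polaire (multipli\'e par $\uppi/3$) du produit d'intervalles sym\'etriques $D$, est un voisinage convexe centr\'e sym\'etrique de $0$~: il contient une boule $\uprho B$ avec $\uprho >0$, et donc $(n-1)D^{\vee}\supset (n-1)\uprho B$, ce qui donne $\|\upbeta'\|>(n-1)\uprho$ d\`es que $\upbeta'\notin (n-1)D^{\vee}$. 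Une fois ce point \'etabli, conjointement \`a la borne de densit\'e (ii) d\'ej\`a disponible dans la preuve du Th\'eor\`eme~\ref{MeyerPoisson}, la majoration finale se r\'eduit \`a un calcul \'el\'ementaire de s\'eries g\'eom\'etriques.
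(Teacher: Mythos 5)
Your strategy (Schwartz decay of $f^{\vee}$ giving weights $O(n^{-M})$ on the $n$-th annulus, the $O(n^{r+s-2})$ piece count, then summation over $n$) is the right quantitative reading of the two facts the paper invokes, and your point (iii), $\|\upbeta'\|\geq (n-1)\uprho$ hence $|\upchi^{\vee}(\upbeta)|=O(n^{-M})$, is exactly what makes "$\upchi^{\vee}$ rapidly decreasing" usable. But your first step contains a genuine gap: the inequality
\[ \sum_{0<\upbeta\in\Updelta\mathfrak{m}_{n}^{\vee}} e^{-\uppi\upbeta^{2}t}\;\leq\; C\,n^{r+s-2}\sum_{k\geq 1} e^{-\uppi (kr)^{2}t} \]
tacitly assumes every positive element of $\Updelta\mathfrak{m}_{n}^{\vee}$ is $\geq r$. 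Uniform discreteness separates distinct points from \emph{each other}, not from $0$, and $0\notin\Updelta\mathfrak{m}_{n}^{\vee}$ for $n\geq 2$; near-zero positive elements really do occur for infinitely many $n$. Indeed, take $\upgamma\in\mathfrak{A}^{\vee}$ with $0<\upgamma<\upvarepsilon<r$ and a unit $u$ with $|u|<1$: the elements $\pm\upgamma u^{k}$ stay smaller than $\upvarepsilon$ in absolute value while $\|\uppi'(\upgamma u^{k})\|\rightarrow\infty$, so they land in annuli $\Updelta\mathfrak{m}_{n_{k}}^{\vee}$ with $n_{k}\rightarrow\infty$. For such an $n$ the left-hand side is at least $e^{-\uppi\upvarepsilon^{2}t}$, which for large $t$ exceeds your right-hand side; the only general lower bound for a positive $\upbeta\in\Updelta\mathfrak{m}_{n}^{\vee}$ is of the shape $\upbeta\geq c\,n^{-(d-1)}$ (from $|{\rm N}_{K/\Q}(\upbeta)|\geq N_{0}$ and $\|\uppi'(\upbeta)\|\leq Cn$), and it degenerates as $n$ grows. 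As a consequence your final bound $|F(t)|\leq Ce^{-\uppi r^{2}t/2}$ overshoots: with $f^{\vee}$ merely Schwartz one cannot expect exponential decay in general, only the rapid decay that the lemma actually asserts, and it is precisely these near-zero elements that make the statement non-trivial.

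The argument is repaired by keeping your ingredients and adding the norm bound above. In the $n$-th sum, the terms with $\upbeta\geq 1$ are controlled by your geometric series and contribute $O(n^{r+s-2-M}e^{-\uppi t})$; the terms with $0<\upbeta<1$ number $O(n^{r+s-2})$ and satisfy $e^{-\uppi\upbeta^{2}t}\leq e^{-c\,t\,n^{-2(d-1)}}$, so their total contribution is at most $C\sum_{n\geq 2} n^{r+s-2-M}e^{-c\,t\,n^{-2(d-1)}}$ (the case $n=1$ has a fixed smallest positive element and is exponentially small). Given $N$, split this sum at $n\sim t^{1/(4(d-1))}$: for small $n$ the exponential factor is $\leq e^{-c\sqrt{t}}$, for large $n$ choosing $M$ large makes $\sum n^{r+s-2-M}=O(t^{-N})$. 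This yields the rapid decay of Lemme \ref{decroissancethetadual}, and it is in substance how the paper's two stated facts must be combined; the paper leaves this bookkeeping implicit, but your proposal, as written, asserts a false intermediate inequality and a stronger conclusion than is available.
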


\begin{proof}[D\'{e}monstration] C'est une conséquence immédiate des faits suivants : 
1) les $\Updelta\mathfrak{m}_{n}$ sont uniform\'{e}ment discrets de constante $r>0$ ind\'{e}pendante de $n$ et 
2) la fonction $\upchi^{\vee}$ est elle-même à d\'{e}croissance rapide \`{a} l'infini.
\end{proof}

Appliquons maintenant la formule sommatoire distributionnelle de Poisson-Meyer à la fonction 
\[ f_{t}(x):=e^{-\uppi tx^{2}},\quad   f^{\vee}_{t}=t^{-1/2}f_{t^{-1}}.\]
En utilisant l'identit\'{e} $T^{\vee}(f)=T(f^{\vee})$, o\`{u} $T$ est une distribution temp\'{e}r\'{e}e, et le fait 
que $f_{t}$ est paire, on obtient l'\'{e}quation fonctionnelle :
\begin{align}\label{fonceqn}   \uptheta_{\mathfrak{m},\upchi}(t)  = \frac{2\uppi }{\sqrt{|{\rm disc}(\mathfrak{A})|}}t^{-1/2}\uptheta^{\vee}_{\mathfrak{m},\upchi}(t^{-1}).\end{align}

\begin{theo}\label{FE1}  La fonction $ \Uplambda  (\mathfrak{m},\upchi, s)$ possède un prolongement méromorphe 
  \`{a} $\C$ tout entier, avec deux p\^{o}les simples en $s=0$ et $s=1$, 
et des r\'{e}sidus $\upchi(0)$ et $2\uppi \upchi^{\vee}(0)/\sqrt{|{\rm disc}(\mathfrak{A}|}$.  De plus, on a l'\'{e}quation fonctionnelle :
 \[  \Uplambda  (\mathfrak{m},\upchi, s ) =  \frac{2\uppi }{\sqrt{|{\rm disc}(\mathfrak{A})|}} \Uplambda^{\vee}  (\mathfrak{m},\upchi, 1-s). \]
\end{theo}

\begin{proof}[Preuve]

En suivant la preuve classique, l'\'{e}quation fonctionnelle (\ref{fonceqn}) implique que la transform\'ee de Mellin
 \[  {\sf M}\left(\uptheta_{\mathfrak{m},\upchi} -\frac{1}{2}\upchi (0)\right)(s) := \int_{0}^{\infty} \left( \uptheta_{\mathfrak{m},\upchi}(t)-\frac{1}{2}\upchi (0)\right) t^{s/2} \frac{dt}{t} \]
 converge pour tout $s$.  Si ${\rm Re}(s)>1$ on a 
 \[   \int_{0}^{\infty} e^{-\uppi t \upalpha^{2}}t^{s/2} \frac{dt}{t} = \uppi^{-s/2}\Upgamma (s/2) \upalpha ^{-s},   \]
 ce qui donne 
 \[{\sf M}\left(\uptheta_{\mathfrak{m},\upchi}-\frac{1}{2}\upchi (0)\right)(s)  = \Uplambda  (\mathfrak{m},\upchi, s). \]
  Puis 
 \begin{align}\label{intrep}  \Uplambda  (\mathfrak{m},\upchi, s) =&  \int_{0}^{1} \left[  \frac{2\uppi}{{\rm disc}(\mathfrak{A})}\uptheta^{\vee}_{\mathfrak{m},\upchi}(t^{-1}) -\frac{\upchi (0)}{2}  \right]t^{s/2}\frac{dt}{t}
 +\int_{1}^{\infty}\left[   \uptheta_{\mathfrak{m},\upchi}(t)- \frac{\upchi (0)}{2}  \right]t^{s/2}\frac{dt}{t} \nonumber \\
 =&   \frac{2\uppi}{{\rm disc}(\mathfrak{A})} \int_{1}^{\infty}  \left[   \uptheta^{\vee}_{\mathfrak{m},\upchi}(t)- \frac{\upchi^{\vee} (0)}{2}  \right] t^{(1-s)/2}\frac{dt}{t}  + \int_{1}^{\infty} \left[   \uptheta_{\mathfrak{m},\upchi}(t)- \frac{\upchi (0)}{2}  \right] t^{s/2}\frac{dt}{t}  \\
   & +   \frac{\upchi(0)}{s} + \frac{2\uppi}{\sqrt{{\rm disc}(\mathfrak{A}) }}\cdot\frac{\upchi^{\vee}(0)}{s-1}\nonumber .
 \end{align}
Il reste à observer que les int\'{e}grales apparaissant dans (\ref{intrep}) sont holomorphes en $s$ sur $\C$ 
tout entier du fait de la d\'{e}croissance rapide \`{a} l'infini des intégrands (voir
 le Lemme \ref{decroissancethetadual}).  L'\'{e}quation fonctionnelle annoncée est alors conséquence 
 de l'\'{e}quation pour les fonctions th\^{e}ta correspondantes.
 \end{proof}
 
Revenons \`{a} la fonction z\^{e}ta, $\upzeta_{\mathfrak{m}}(s)$.  
On aurait envie de prendre $f=f_{D}$, fonction indicatrice de la fen\^{e}tre $D$,
dans la formule 
sommatoire de Poisson-Meyer.  Toutefois celle-ci n'est valable que pour $f$ lisse et $f_D$ ne l'est pas.
N\'{e}anmoins, on va voir qu'en prenant une suite de fonctions lisses à supports compacts qui converge vers $f_{D}$, 
la limite correspondante du c\^{o}t\'{e} droit de (\ref{intrep}) est uniforme sur les compacts de $\C \setminus\{ 0,1\}$.

Soient donc $\upvarepsilon\rightarrow 0$ et $f_{\upvarepsilon}\rightarrow f_{D}$ une suite de fonctions lisses qui convergent uniformément à $f_{D}$ telles que
\begin{enumerate}
\item ${\rm supp}(f_{\upvarepsilon})= D_{\upvarepsilon}$ est relativement compacte pour tout $\upvarepsilon$ et $D_{\upvarepsilon}\rightarrow D$ uniformément en la topologie de Hausdorff. 
\item $f_{\upvarepsilon}|_{D}\equiv 1$ et $|f_{\upvarepsilon}(x)|\leq 1$ pour tout $x$ ;
\item $f_{\upvarepsilon}\geq 0$ et $f_{\upvarepsilon}(-x)=f_{\upvarepsilon}(x)$ pour tout $x\in K_{\{ \upsigma\}'}$.
\end{enumerate}
Soit $\mathfrak{m}_{\upvarepsilon}=\mathcal{M}(\mathfrak{A}, D_{\upvarepsilon})={\rm supp}(\upchi_{\upvarepsilon})$. 
On note d'abord que 
$ \Uplambda  (\mathfrak{m}_{\upvarepsilon},\upchi_{\upvarepsilon}, s)$ converge
pour ${\rm Re}(s)>1$ vers
\[ \Uplambda (\mathfrak{m},s):=\uppi^{-s/2}\Upgamma (s/2)\upzeta_{\mathfrak{m}}(s) \]
(qui a une extension meromorphique à $\C\setminus \{ 0,1\}$ à cause de la Proposition \ref{contmer}).
Cette convergence a lieu parce que :
\begin{enumerate}
\item[1.]  Tout $\upalpha\in \mathfrak{m}_{\upvarepsilon}\setminus \mathfrak{m}$ satisfait $x_{i}<_{+_{i}} |\upsigma_{i} (\upalpha)|< x_{i}+\upvarepsilon$.  Alors (\`{a} l'exception de $\upalpha$ ``limite''  e.g.\ $ |\upsigma_{i} (\upalpha)|= x_{i}$ pour tout $i$), 
la derni\`{e}re inégalité implique que pour tout  $M>0$ il existe $N$
tel que $|\upalpha|>M$ quand $\upvarepsilon^{-1}\geq N$ ;
\item[2.] $|\upchi_{\upvarepsilon} (\upalpha )|\leq 1$.
\end{enumerate}
Pour les m\^{e}mes raisons on a convergence uniforme sur les compacts des fonctions 
$  \uptheta_{\mathfrak{m_{\upvarepsilon},\upchi_{\upvarepsilon}}}(t)$ vers $  \uptheta_{\mathfrak{m}}(t)$,
donc convergence des integrales
\[ \int_{1}^{\infty} \left[   \uptheta_{\mathfrak{m_{\upvarepsilon},\upchi_{\upvarepsilon}}}(t)- \frac{\upchi (0)}{2}  \right] t^{s/2}\frac{dt}{t}\longrightarrow  \int_{1}^{\infty} \left[   \uptheta_{\mathfrak{m}}(t)- \frac{\upchi (0)}{2}  \right] t^{s/2}\frac{dt}{t}.\]
Alors, d'après (\ref{intrep}), les int\'{e}grales 
\begin{align}\label{intlimite}  \frac{2\uppi}{{\rm disc}(\mathfrak{A})} \int_{1}^{\infty}  \left[   \uptheta^{\vee}_{\mathfrak{m}_{\upvarepsilon},\upchi_{\upvarepsilon}}(t)- \frac{\upchi_{\upvarepsilon}^{\vee} (0)}{2}  \right] t^{(1-s)/2}\frac{dt}{t} \end{align}
convergent aussi uniform\'{e}ment sur les compacts, toujours pour ${\rm Re}(s)>1$.  
En utilisant la sym\'{e}trie $s\mapsto 1-s$, 
on déduit que la famille $\{  \Uplambda^{\vee}  (\mathfrak{m}_{\upvarepsilon},\upchi_{\upvarepsilon}, s)\}$ converge uniformément sur les compacts de $\C\setminus \{ 0,1\}$.  Notons la limite
\[ \Uplambda^{\vee} (\mathfrak{m},s) := \lim_{\upvarepsilon \rightarrow 0} \Uplambda^{\vee}  (\mathfrak{m}_{\upvarepsilon},\upchi_{\upvarepsilon}, s).
\]
On obtient donc l'\'{e}quation fonctionnelle :
\begin{theo}  $\Uplambda (\mathfrak{m},s) =\frac{2\uppi }{\sqrt{|{\rm disc}(\mathfrak{A})|}}\Uplambda^{\vee} (\mathfrak{m},1-s)$ pour tout $s\in \C$.
\end{theo}

\begin{proof}[Démonstration]  Par le Théorème \ref{FE1}, on a pour tout $s\in \C\setminus \{ 0,1\}$
\[\Uplambda (\mathfrak{m},s)= \lim_{\upvarepsilon \rightarrow 0}  \Uplambda  (\mathfrak{m}_{\upvarepsilon},\upchi_{\upvarepsilon}, s)=
\lim_{\upvarepsilon \rightarrow 0}  \frac{2\uppi }{\sqrt{|{\rm disc}(\mathfrak{A})|}} \Uplambda^{\vee}  (\mathfrak{m}_{\upvarepsilon},\upchi_{\upvarepsilon}, 1-s)=\frac{2\uppi }{\sqrt{|{\rm disc}(\mathfrak{A})|}}\Uplambda^{\vee} (\mathfrak{m},1-s).
\]
 \end{proof}





 \section{L'invariant modulaire quantique et un résultat de R. Pink}\label{modinvsec}
 
Nous d\'{e}veloppons ici la relation, d\'{e}couverte 
par Richard Pink (\cite{Pink}), avec les quasicristaux et l'invariant modulaire quantique; voir le Th\'{e}or\`{e}me \ref{pink} ci-dessous.  Cette section, aussi brève soit-elle, 
revêt néanmoins une importance centrale, l'observation de Pink ayant en grande partie motivé le présent travail.
 
Soient $A=A_{\upsigma}\subset\R$ un anneau quasicristallin de rang 1 et $\mathfrak{a}\subset\R$ un id\'{e}al quasicristallin fractionnaire de $A$. En utilisant la fonction z\^{e}ta $\upzeta_{\mathfrak{a}}$ (voir \S \ref{zeta}) on peut d\'{e}finir 
l'{\em invariant modulaire quasicristallin}, à savoir
\[ j(\mathfrak{a}):= \frac{12^{3}}{1-\frac{49}{40} J(\mathfrak{a}) },\quad J(\mathfrak{a}):=\frac{\upzeta_{\mathfrak{a}}(6)^{2}}{\upzeta_{\mathfrak{a}}(4)^{3}}.\] 
Il y a \'{e}videmment invariance par rapport \`{a} la multiplication par les éléments de $K^{\times}$, par suite de la egalité
 $\upzeta_{\upalpha\mathfrak{a}}(s) =\upalpha^{-s}\upzeta_{\mathfrak{a}}(s)$, $\upalpha\in K^{\times}$.
On obtient ainsi
une fonction bien d\'{e}finie
\[   j: {\sf Cl}^{\sf mod}( A)\longrightarrow \R\cup \{\infty\} .\]

\begin{theo}\label{contjclass} L'invariant modulaire quasicristallin
 $  j$ est continu.   
\end{theo}

\begin{proof}[D\'{e}monstration]  Il suffit de montrer que la restriction au mono\"{\i}de $Z\subset  {\sf Cl}^{\sf mod}( A)$ est continue.  Si $\mathfrak{a}_{i}\rightarrow \mathfrak{a}$, alors $\mathfrak{a}_{i}$ converge vers 
$\mathfrak{a}$ dans la topologie de Hausdorff (uniform\'{e}ment sur tout compact) et donc
pour tout $s$ fixé, $\upzeta_{\mathfrak{a}_{i}}(s)\rightarrow \upzeta_{\mathfrak{a}}(s)$.  
La continuit\'{e} de $j$ est une conséquence immédiate. 
\end{proof}



Soit $\uptheta\in \R$.  Pour $\upvarepsilon>0$ on d\'{e}finit
\[  \Uplambda_{\upvarepsilon}(\uptheta ) := \left\{  n\in\Z\, |\; \| n\uptheta \|<\upvarepsilon \right\} \]
o\`{u} $\| x\|$ est la distance de $x$ \`{a} l'entier le plus proche.  On introduit la fonction z\^{e}ta
\[  \upzeta_{\uptheta,\upvarepsilon}(s) = \sum_{0<n\in \Uplambda_{\upvarepsilon}(\uptheta )} n^{-s} \]
qui converge pour $\Re (s)>1$, en tant qu'une sous-somme de celle qui définit la fonction z\^{e}ta classique.
On d\'{e}finit de plus
\[ j_{\upvarepsilon}:\R\longrightarrow \R\]
pour \[   j_{\upvarepsilon}(\uptheta ):= \frac{12}{1-J_{\upvarepsilon}(\uptheta )},\quad J_{\upvarepsilon}(\uptheta ):=
 \frac{49}{40}\frac{ \upzeta_{\uptheta,\upvarepsilon}(6)^{2}}{ \upzeta_{\uptheta,\upvarepsilon}(4)^{3}}.  \]
L'{\em invariant modulaire quantique} (\cite{Ge-C})
est la fonction {\it multivalu\'{e}e}
\[ j^{\rm qt}:{\sf Mod}^{\rm qt}:= \R/{\sf GL}_{2}(\Z ) \multimap \R\cup \{ \infty\},\quad j^{\rm qt}(\uptheta ):= \lim_{\upvarepsilon\rightarrow 0}  j_{\upvarepsilon}(\uptheta ) , \]
o\`{u} la limite est définie comme l'ensemble des points d'adhérence lorsque $\upvarepsilon\rightarrow 0$.

 Soit maintenant $\uptheta$ une unit\'{e} fondamentale quadratique r\'{e}elle de $K=\Q (\uptheta )$ avec $\mathcal{O}_{K}=\Z[\uptheta]$ 
 l'anneau des entiers. Lors d'une conversation priv\'{e}e (\cite{Pink}), Richard Pink a 
 dégagé la formule suivante pour $j^{\rm qt}(\uptheta )$.  Revenant au mono\"{\i}de 
$ {\sf Z}$ d\'{e}fini dans la formule (\ref{defZ}) du \S \ref{monoides}, celui-ci s'écrit dans le cas quadratique sous la forme
\[  {\sf Z}=\bigcup_{x\in [0,1)}\{\mathfrak{a}_{x} ,\mathfrak{a}^{+}_{x}  \}\subset {\sf Cl}^{\sf mod}( A).\]
 
 
   \begin{theo}[R.Pink]\label{pink} Soit $\uptheta$ une unit\'{e} fondamentale quadratique r\'{e}elle.  Alors
 \[  j^{\sf qt}(\uptheta )=\{  j(\mathfrak{a})|\; \mathfrak{a}\in {\sf Z}\}     .\]
 \end{theo}
 
 \begin{proof}[D\'{e}monstration]   On peut supposer que $\uptheta >1$, étant donné que
 $j^{\rm qt}(\uptheta )=j^{\rm qt}(\pm \uptheta^{\pm1})$.  
Notons 
$\Uplambda_{x}:=\Uplambda_{\uptheta^{-x}}(\uptheta)$ et $\Updelta:= \uptheta-\uptheta'$.  Pour $x\geq 0$ et $n\in \Uplambda_{x}$,
 il existe $m\in\Z$ 
 tel que $|n\uptheta +m|<\uptheta^{-x}$.  En particulier, en \'{e}crivant $\upalpha=n\uptheta +m$, on a
 \[  \Uplambda_{x}=\left\{ \left. \frac{\upalpha-\upalpha'}{\Updelta}\right|\; \upalpha\in \mathcal{O}_{K}, \; |\upalpha|< \uptheta^{-x}\right\},\]
 o\`{u} $\upalpha'$ est le conjugu\'{e} de $\upalpha$.  D'où
  \begin{align*}  \frac{\Updelta}{ \uptheta^{m}} \Uplambda_{x+m} = & \left\{  \left.\frac{\upalpha-\upalpha'}{ \uptheta^{m}}\right|\; \upalpha\in \mathcal{O}_{K},  |\upalpha|<  \uptheta^{-x-m}\right\} \\
&  \text{(change de variable $\upalpha \leadsto-\upbeta' \uptheta'{}^{m}$)}\\
   = & \left\{  \left.\frac{-\upbeta' \uptheta'{}^{m}+\upbeta \uptheta^{m}}{ \uptheta^{m}}\right|\; \upbeta\in \mathcal{O}_{K},  |\upbeta'|< \uptheta^{-x}\right\}  \\
  = &  \left\{  \left.\upbeta -\frac{\upbeta'}{(- \uptheta^{2})^{m}}\right|\; \upbeta\in \mathcal{O}_{K},  |\upbeta'|< \uptheta^{-x}\right\} .
 \end{align*}
 En particulier
 \[\lim_{m\rightarrow\infty} \frac{\Updelta}{\uptheta^{m}} \Uplambda_{x+m}= \mathfrak{a}_{x} \]
 et
 \[\lim_{m\rightarrow\infty} j_{\uptheta^{-x-m}}(\uptheta) = j(\mathfrak{a}_{x}).\]
 On en déduit que
 $$\{  j(\mathfrak{a}_{x}) |\; x\in [0,1) \}=\{  j(\mathfrak{a}_{x}) |\; x\geq 0 \} \subset j^{\sf qt}(\uptheta ) $$
 où la première égalité est consequence de l'identité $\mathfrak{a}_{x+1}=\uptheta \mathfrak{a}_{x}$ et
 l'invariance de $j(\mathfrak{a})$ par rapport à $\mathfrak{a}\mapsto \uplambda\mathfrak{a}$, $\uplambda\in\R$.
Donc tout point d'adhérence de $j_{\upvarepsilon}(\uptheta)$ l'est aussi des $j(\mathfrak{a}_{x})$, ce qui
 fournit le r\'{e}sultat.
 \end{proof}
 
 \begin{coro}\label{jqtestCantor} Soit $\uptheta\in \R$ une unit\'{e} fondamentale quadratique.  Alors $j^{\rm qt}(\uptheta)$ est l'image continue d'un ensemble de Cantor.
\end{coro}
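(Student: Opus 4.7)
Le plan est de combiner trois r\'{e}sultats d\'{e}j\`{a} \'{e}tablis dans l'article pour obtenir ce corollaire, qui est une cons\'{e}quence essentiellement directe de ce qui pr\'{e}c\`{e}de. Premi\`{e}rement, le Th\'{e}or\`{e}me \ref{pink} de R. Pink fournit l'identit\'{e} cl\'{e}
\[ j^{\rm qt}(\uptheta) = \{ j(\mathfrak{a}) \mid \mathfrak{a} \in {\sf Z}\} = j({\sf Z}), \]
qui r\'{e}alise l'ensemble multivalu\'{e} $j^{\rm qt}(\uptheta)$ comme l'image par $j$ du sous-mono\"{\i}de ${\sf Z} \subset {\sf Cl}^{\sf mod}(A)$, o\`{u} $A = A_{\upsigma_{1}}$ est l'anneau quasicristallin PVS associ\'{e} au corps quadratique r\'{e}el $K = \Q(\uptheta)$.

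Deuxi\`{e}mement, la d\'{e}monstration du th\'{e}or\`{e}me affirmant que ${\sf Cl}^{\sf mod}(A)$ est un mono\"{\i}de de Cantor a \'{e}tabli en particulier que $\sf Z$ est parfait et ferm\'{e} dans l'ensemble de Cantor ${\sf Cl}^{\sf mod}(A)$, donc est lui-m\^{e}me un ensemble de Cantor. Troisi\`{e}mement, le Th\'{e}or\`{e}me \ref{contjclass} assure la continuit\'{e} de l'application
\[ j : {\sf Cl}^{\sf mod}(A) \longrightarrow \R \cup \{\infty\}, \]
et donc a fortiori celle de sa restriction $j|_{\sf Z} : {\sf Z} \to \R \cup \{\infty\}$.

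En assemblant ces trois observations, $j^{\rm qt}(\uptheta) = j({\sf Z})$ appara\^{i}t comme l'image d'un ensemble de Cantor par une application continue \`{a} valeurs dans le compact $\R \cup \{\infty\}$, ce qui est exactement l'\'{e}nonc\'{e} du corollaire. Il n'y a pas d'obstacle s\'{e}rieux ici, toute la difficult\'{e} technique ayant \'{e}t\'{e} absorb\'{e}e en amont dans la formule de Pink (qui repose sur une analyse fine des approximations diophantiennes) et dans l'\'{e}tablissement de la structure cantorienne de $\sf Z$ ainsi que de la continuit\'{e} de l'invariant modulaire quasicristallin. On obtient incidemment la dichotomie annonc\'{e}e dans l'introduction : $j^{\rm qt}(\uptheta)$ est compact et connexe ou totalement discontinu selon la nature de l'image, et en particulier est soit fini, soit lui-m\^{e}me de Cantor.
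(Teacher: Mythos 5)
Your argument is exactly the paper's (implicit) proof: the corollary is stated without a separate proof precisely because it follows at once from Th\'{e}or\`{e}me \ref{pink} (which identifies $j^{\rm qt}(\uptheta)$ with $j({\sf Z})$), the fact established earlier that ${\sf Z}$ is a Cantor set inside the Cantor mono\"{\i}de ${\sf Cl}^{\sf mod}(A)$, and the continuity of $j$ from Th\'{e}or\`{e}me \ref{contjclass}. One caution: your closing ``incidental'' remark overreaches, since a continuous image of a Cantor set need not be finite or Cantor (any compact metric space arises this way), and indeed the paper's own Note after the corollary observes that concluding the Cantor property would require the (conjectural) injectivity of $j$ --- but this lies outside the statement you were asked to prove, whose proof is correct.
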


\begin{note} Pour d\'{e}montrer que $j^{\rm qt}(\uptheta)$ est de Cantor, il serait suffisant \'{e}tablir que $j$ est {\it injective}, propriet\'{e} qui est conjecturée.
\end{note}

\begin{note}\label{PinkAnalog} Le Th\'{e}or\`{e}me de Pink constitue l'analogue exact du Theorem 4 de \cite{DGIII}.  	
Plus pr\'{e}cis\'{e}ment (pour la notation voir au \S\ref{DrinfeldHayes} ci-dessus) 
soit ${\bf K}={\bf Q}(f)$ o\`{u} $f$ est une unit\'{e} fondamentale.
Si l'on note
\[  {\bf j}^{\rm qt}:{\rm GL}_{2}({\bf Z})\backslash {\bf R}\multimap {\bf R} \cup \{\infty\}\]
l'invariant modulaire quantique en caract\'{e}ristique positive associ\'{e} au corps de fonctions 
${\bf R}={\bf Q}_{\infty}$ (\cite{DGI}, \cite{DGII}), on a
\[ {\bf j}^{\rm qt}(f) = \{ j(\mathfrak{a})|\; \mathfrak{a}\in {\sf Z}\}\] 
o\`{u} maintenant ${\sf Z}=\{ \mathfrak{a}_{0},\dots , \mathfrak{a}_{d-1}\}\cong {\rm Gal}( {\bf H}_{{\bf A}_{\infty_{1}}}/ {\bf H}_{{\bf O}_{\bf K}})$ est le sous groupe de ${\sf Cl}_{ {\bf A}_{\infty_{1}}  }$ d\'{e}crit au 
\S\ref{DrinfeldHayes}.
\end{note}

\section{Le soléno\"{\i}de associ\'{e} \`{a} un quasicristal}\label{solenoid}

D'après la {\it Note} \ref{PinkAnalog} ci-dessus les quasicristaux dans le Th\'{e}or\`{e}me \ref{pink} 
jouent le m\^{e}me r\^{o}le que les id\'{e}aux $\mathfrak{a} \subset A_{\upsigma_{1}}$, et donc les quasicristaux de rang 1 
devraient fournir un analogue des modules de Drinfeld en caract\'{e}ristique nulle.  Dans cette section on d\'{e}veloppe 
l'analogue de la notion {\em analytique} de module Drinfeld, c'est-\`{a}-dire l'analogue du module quotient ${\bf R}/\mathfrak{a}$.

  L'ensemble des quasicristaux de $\R^{n}$ est muni d'une topologie de la mani\`{e}re sui\-vante : \`{a} un quasicristal 
  $\Upomega\subset\R^{n}$ on associe la mesure de Radon $\upmu_{\Upomega}$ qui est le peigne de Dirac 
  de $\Upomega$ (voir la formule (\ref{PdeD})) et on met la topologie faible sur l'espace des mesures $\upmu_{\Upomega}$.  Si on fixe $\Upomega$, 
  on note la cl\^{o}ture 
  \[ \hat{\SI}_{\Upomega}:=    
  \overline{\{ v+\Upomega  \}}_{v\in\R^{n}}
  \]  
  (voir par exemple \cite{BBG}). De manière équivalente $\hat{\SI}_{\Upomega}$ peut \^{e}tre d\'{e}fini en utilisant le pavage de Vorono\"{\i} $P_{\Upomega}$ associ\'{e} à $\Upomega$,  o\`{u} maintenant $\hat{\SI}_{\Upomega}$ est la cl\^{o}ture de
 l'ensemble $\{v+P_{\Upomega}\}_{v\in \R^{n}}$  des translatés de $P_{\Upomega}$.  
Dans \cite{BBG} les auteurs montrent que $\hat{\SI}_{\Upomega}$ est homéo\-morphe \`{a} la limite inverse d'un 
syst\`{e}me de variet\'{e}s ramifi\'{e}es plates (``branched flat manifolds'') de dimension $n$.   
Si $\Upomega$ est un r\'{e}seau,  $\hat{\SI}_{\Upomega}\approx \R^{n}/\Upomega$ 
et on r\'{e}cup\`{e}re la notion classique de tore quotient.  Un point de $\hat{\SI}_{\Upomega}$ est
r\'{e}presenté par un pavage construit avec les mêmes mailles que $\Upomega$ et qui possède des sous-pavages 
arbitrairement grands qui co\"{\i}ncident avec des sous-pavages de $\Upomega$ \`{a} une translation pr\`{e}s.

\begin{prop} Soit $\Upomega\subset \R^{n}$ un quasicristal.  Alors $\hat{\SI}_{\Upomega}$ est un soléno\"{\i}de compact de dimension $n$.  Si $\Upomega=\mathcal{M}$ est un ensemble mod\`{e}le g\'{e}n\'{e}rique, $\hat{\SI}_{\Upomega}$
est minimal.
 \end{prop}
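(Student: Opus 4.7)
Je proc\'ederais en trois \'etapes principales.

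Premi\`erement, il s'agit d'\'etablir la compacit\'e de $\hat{\SI}_{\Upomega}$. La d\'efinition via le peigne de Dirac $\upmu_{\Upomega}$ et la topologie vague des mesures ram\`ene la question \`a la compacit\'e de l'adh\'erence d'une famille uniform\'ement born\'ee de mesures discr\`etes. La propri\'et\'e d'\^etre un quasicristal au sens de Meyer, $\Upomega-\Upomega\subset \Upomega+F$ avec $F$ fini, entra\^ine la {\em complexit\'e locale finie}: pour tout $r>0$, les $r$-configurations $\Upomega\cap B_r(x)$, $x\in\R^n$, ne forment qu'un nombre fini d'ensembles \`a translation pr\`es. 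Combin\'ee avec la Delaunitude, cette propri\'et\'e permet par un argument diagonal (ou via le th\'eor\`eme d'Arzela-Ascoli appliqu\'e aux pavages de Vorono\"i) d'\'etablir la compacit\'e souhait\'ee.

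Deuxi\`emement, il faut obtenir la structure de sol\'eno\"ide de dimension $n$. J'invoquerais directement le r\'esultat de Benedetti-Bellissard-Gambaudo \cite{BBG} d\'ej\`a cit\'e, qui identifie $\hat{\SI}_{\Upomega}$ \`a la limite inverse d'un syst\`eme de vari\'et\'es plates ramifi\'ees de dimension $n$. On en d\'eduit que $\hat{\SI}_{\Upomega}$ admet des bo\^ites feuillet\'ees du type $U\times K$, $U\subset\R^n$ ouvert et $K$ totalement discontinu, les feuilles \'etant les orbites de l'action continue de $\R^n$ par translation, ce qui constitue la d\'efinition d'un sol\'eno\"ide de dimension $n$.

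Troisi\`emement, pour la minimalit\'e dans le cas g\'en\'erique $\Upomega=\mathcal{M}=\mathcal{M}(V_1,V_2,\Upgamma, D)$, mon plan est le suivant: (a) identifier $\hat{\SI}_{\mathcal{M}}$ avec le quotient $(\R^n\times V_2)/\Upgamma$ o\`u $\Upgamma$ agit diagonalement, \`a travers l'application $(v,w)\mapsto v+\mathcal{M}(\Upgamma, w+D)$; (b) utiliser la densit\'e de la projection $\uppi_2(\Upgamma)$ dans $V_2$ (hypoth\`ese standard sur les ensembles mod\`eles, sans laquelle on pourrait remplacer $V_2$ par un sous-espace plus petit); (c) invoquer la continuit\'e de $w\mapsto \mathcal{M}(\Upgamma, w+D)$ en topologie locale, pr\'ecis\'ement cons\'equence de la g\'en\'ericit\'e au sens du papier (aucun point de $\uppi_2(\Upgamma)$ sur le bord de $D$ apr\`es translation). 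La minimalit\'e s'ensuit par densit\'e des orbites: toute configuration limite est approch\'ee par des translat\'es de $\mathcal{M}$ gr\^ace \`a la densit\'e de $\uppi_2(\Upgamma)$.

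L'obstacle principal sera la troisi\`eme \'etape, notamment la v\'erification pr\'ecise que la condition de g\'en\'ericit\'e donn\'ee dans le papier (existence d'une unique fen\^etre minimale, Lemme \ref{uniquerep}) co\"incide avec la notion d'ensemble mod\`ele r\'egulier au sens classique (Schlottmann, Lee-Moody-Solomyak), et assure la continuit\'e de $w\mapsto\mathcal{M}(\Upgamma, w+D)$ sur un ensemble dense de param\`etres---ce qui suffit pour obtenir la densit\'e d'orbite recherch\'ee. Les deux premi\`eres \'etapes sont largement standard dans la litt\'erature sur les espaces de pavages ap\'eriodiques.
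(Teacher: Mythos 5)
Vos deux premi\`eres \'etapes suivent pour l'essentiel la preuve du papier : la structure de sol\'eno\"{\i}de de dimension $n$ est obtenue en invoquant \cite{BBG} (limite inverse de vari\'et\'es plates ramifi\'ees), et la compacit\'e vient du caract\`ere Delaunay de $\Upomega$. Vous ajoutez, \`a juste titre, la complexit\'e locale finie d\'eduite de la propri\'et\'e de Meyer $\Upomega-\Upomega\subset\Upomega+F$ : le papier ne la mentionne pas explicitement, mais elle est bien n\'ecessaire pour la compacit\'e de l'adh\'erence des translat\'es ; sur ce point votre r\'edaction est plus compl\`ete.

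En revanche, votre troisi\`eme \'etape contient un vrai d\'efaut : on ne peut pas ``identifier'' $\hat{\SI}_{\mathcal{M}}$ au quotient $(\R^{n}\times V_{2})/\Upgamma$. Ce quotient est un tore de dimension $n+m$, donc une vari\'et\'e, tandis que $\hat{\SI}_{\mathcal{M}}$ --- comme vous l'\'etablissez vous-m\^eme \`a l'\'etape 2 --- est un sol\'eno\"{\i}de \`a transversales totalement discontinues ; pour $\mathcal{M}$ ap\'eriodique il n'existe qu'une surjection continue (la param\'etrisation torique) de $\hat{\SI}_{\mathcal{M}}$ sur $(\R^{n}\times V_{2})/\Upgamma$, injective seulement au-dessus des param\`etres non singuliers, et l'application $(v,w)\mapsto v+\mathcal{M}(\Upgamma,w+D)$ que vous \'ecrivez est discontinue aux $w$ pour lesquels $\uppi_{2}(\Upgamma)$ rencontre le bord de $w+D$ ; elle ne peut donc fournir l'hom\'eomorphisme annonc\'e. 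La strat\'egie reste r\'eparable sans cette identification : tout \'el\'ement de $\hat{\SI}_{\mathcal{M}}$ est coinc\'e entre $\mathcal{M}(\Upgamma,w+{\rm int}\,D)$ et $\mathcal{M}(\Upgamma,w+\overline{D})$ pour un certain $w$, et la densit\'e de $\uppi_{2}(\Upgamma)$ dans $V_{2}$, jointe \`a la g\'en\'ericit\'e (fen\^etre minimale unique, cf. Lemme \ref{uniquerep}), permet de retrouver $\mathcal{M}$ dans l'adh\'erence de chaque orbite. Mais c'est pr\'ecis\'ement le d\'etour que le papier \'evite : il observe qu'un ensemble mod\`ele g\'en\'erique est r\'ep\'etitif, et que la r\'ep\'etitivit\'e (avec complexit\'e locale finie) entra\^{\i}ne la minimalit\'e de $\hat{\SI}_{\Upomega}$ par l'argument standard de type Gottschalk (voir \cite{BG}, ou \cite{Moody}) --- route plus courte, qui ne requiert ni la param\'etrisation torique ni une hypoth\`ese explicite de densit\'e de $\uppi_{2}(\Upgamma)$.
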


 \begin{proof} \'{E}tant limite inverse de variet\'{e}s ramifi\'{e}es de dimension $n$, $\hat{\SI}_{\Upomega}$
est par définition un soléno\"{\i}de.  Puisque $\Upomega$ est de Delaunay, $\hat{\SI}_{\Upomega}$ 
est compact. Enfin si $\Upomega$ est g\'{e}n\'{e}rique, il est r\'{e}p\'{e}titif, ce qui implique que $\hat{\SI}_{\Upomega}$ 
est minimal (voir par exemple \cite{BG}).
 \end{proof}
 
 Il est instructif de rappeler le cas du soléno\"{\i}de classique 
\[\hat{\SI}^{1} =\lim_{\longleftarrow}\SI^{1},\]
o\`{u} la limite est d\'{e}finie par rapport \`{a} tous les épimorphismes de $\SI^{1}=\R/\Z$.  Le complet\'{e} profini 
$\hat{\Z}$ de $\Z$ s'identifie au noyau de l'épimorphisme $\hat{\SI}^{1}\rightarrow\SI^{1}$,
$\hat{\SI}^{1}/\hat{\Z}\cong \SI^{1}$,
et d\'{e}finit un sous-groupe de $\hat{\SI}^{1}$ qui est une transversale canonique.  De manière équivalente
$\hat{\Z}=\overline{\Z}$ est la cl\^{o}ture topologique de $\Z\subset\R\subset \hat{\SI}^{1}$, 
o\`{u} le plongement $\R\hookrightarrow\hat{\SI}^{1}$ est induit par $\R\rightarrow\SI^{1}$.

Un épimorphisme $\uprho :\SI^{1}\rightarrow \SI^{1}$ 
induit un {\it homéomorphisme} $\hat{\uprho}:\hat{\SI}^{1}\rightarrow \hat{\SI}^{1}$,
de la m\^{e}me mani\`{e}re que $\uprho$ induit un homéomorphisme $\tilde{\uprho}:\R\rightarrow\R$ 
du rev\^{e}tement universel $\R$ de $\SI^{1}$.  Mais si l'on consid\`{e}re plut\^{o}t comme objet 
principal la paire $(\hat{\SI}^{1}, \hat{\Z})$, 
alors $\hat{\uprho}$ est un morphisme de telles paires, $\hat{\uprho}(\hat{\Z})\subset \hat{\Z}$,  et  
\[ \hat{\uprho}^{-1} (\hat{\Z}) = \bigsqcup_{i=1}^{\text{d\'{e}g}(\uprho )} (r_{i} +\hat{\Z}).\]
On peut ainsi considérer $\hat{\uprho}$ comme un {\it rev\^{e}tement de paires} de degré égal à $\text{deg}(\uprho )$.  
Intuitivement, on peut  voir $\hat{\Z}$ comme un élargissement de l'él\'{e}ment
neutre du cercle $\SI^{1}$ et, en conservant la transversale $\hat{\Z}$, 
on r\'{e}cup\`{e}re les propriétés de $\SI^{1}$ en travaillant avec la paire $(\hat{\SI}^{1},\hat{\Z})$.
 
Revenant au solénoïde $\hat{\SI}_{\Upomega}$, le r\^{o}le de $\hat{\Z}$ est jou\'{e} par l'ensemble
 \[ \hat{\Upomega}=  \overline{\{ \upomega+\Upomega  \}}_{\upomega\in\Upomega}\subset\hat{\SI}_{\Upomega},\]
transversale complète\footnote{Une {\em transversale} ({\em complète}) d'une lamination $\mathcal{L}$ est un sous-ensemble $T\subset\mathcal{L}$ tel que $T\cap L$ est discret (non vide) pour la topologie de $L$, pour toute feuille $L\subset\mathcal{L}$.} 
passant par $0$, la {\em compl\'{e}t\'{e}e pro-quasicristalline} de $\Upomega$.  Si $\Upomega$ est un r\'{e}seau 
$\hat{\Upomega}=\{ 0\}$ ; on peut donc voir en g\'{e}n\'{e}ral $\hat{\Upomega}$
 comme un élargissement de l'él\'{e}ment neutre.
 Par contre, si l'on identifie deux points de $\hat{\SI}_{\Upomega}$ qui appartiennent \`{a} la m\^{e}me  $\hat{\Upomega}$ transversale, $x\sim y$ si et seulement si $x,y\in z+\hat{\Upomega}$, qui donne le {\it quotient transversal topologique}  
 d\'{e}fini par $\hat{\Upomega}$. On obtient ainsi en g\'{e}n\'{e}ral 
 une variet\'{e} ramifi\'{e}e plate.  Nous adopterons dans la suite une position interm\'{e}diaire, 
consid\`{e}rant parfois des notions comme ``fonction'', ``torsion'' modulo $\hat{\Upomega}$, 
mais sans identifier ce dernier ensemble \`{a} un point.  Autrement dit on regardera $\hat{\Upomega}$ 
comme la {\it transversale neutre} en d\'{e}veloppant une th\'{e}orie qui n'oublie pas ce fait en passant au quotient.  
 
 

Consid\'{e}rons maintenant l'analogue d'une isogénie entre deux soléno\"{\i}des quasicristallins.  
On dit que deux quasicristaux $\Upomega, \Upomega'\subset\R^{n}$ sont {\em \'{e}quivalents} 
(\cite{Moody}, \cite{Meyer}) s'il existe un couple d'ensembles finis $F,F'\subset\R^{n}$ telle que
\[\Upomega\subset\Upomega'+F'\quad \text{et} \quad \Upomega'\subset\Upomega+F .  \]

\begin{exam}\label{solenoidex} i) Si $\Upomega, \Upomega'$ sont deux r\'{e}seaux isogènes 
alors $\Upomega,\Upomega'$ sont \'{e}quivalents~; cette relation d'\'{e}quivalence est donc
une g\'{e}n\'{e}ralisation de la relation d'isog\'{e}nie.  

ii) Si $\Upomega, \Upomega'\subset\R^{n}$ sont deux ensembles mod\`{e}les d\'{e}finis par rapport au m\^{e}me r\'{e}seau $\Upgamma\subset\R^{n+k}$, alors $\Upomega,\Upomega'$ sont toujours \'{e}quivalents (\cite{Moody}).
\end{exam}

On rappelle qu'un ensemble $X\subset\R^{n}$ est {\em apériodique} si le groupe de ses périodes
\[ {\rm Per}(X) := \{ v\in \R^{n}|\; v+X=X\} \]
est réduit à 0.  Si un quasicristal $\Upomega\subset\R^{n}$ est apériodique, on appelle 
\[ L_{\Upomega}:=\R^{n} +\Upomega\subset\hat{\SI}_{\Upomega}\] 
la {\em feuille canonique} ; elle est clairement homéomorphe \`{a} $\R^{n}$ et dense dans $\hat{\SI}_{\Upomega}$.

\begin{theo}\label{isog}  Soient \[ \Upomega\subset \Upomega'\subset\R^{n}\] deux quasicristaux mod\`{e}les d\'{e}finis 
par le m\^{e}me r\'{e}seau.  Alors,  la correspondence $x+\Upomega\mapsto x+\Upomega'$, $x\in\R^{n}$, d\'{e}finit 
un morphisme de paires {\rm (}{\rm rev\^{e}tement}{\rm )} canonique 
\[ \uprho:(\hat{\SI}_{\Upomega}, \hat{\Upomega})\longrightarrow (\hat{\SI}_{\Upomega'}, \hat{\Upomega}')\]
qui est surjectif.
Plus pr\'{e}cis\'{e}ment, $\uprho: \hat{\SI}_{\Upomega}\rightarrow \hat{\SI}_{\Upomega'}$ est un homéomorphisme 
local surjectif de soléno\"{\i}des telle que $\uprho (\hat{\Upomega})\subset \hat{\Upomega}'$ et 
\begin{align}\label{preimagefinie} \uprho^{-1}(\hat{\Upomega}') \subset \bigcup_{i=1}^{d}(x_{i}+\hat{\Upomega}),\quad x_{i}\in\R^{n},\end{align}
o\`{u} l'union n'est pas  forc\'{e}ment disjointe.
Si $\Upomega, \Upomega'$ sont tous les deux apériodiques, $\uprho: \hat{\SI}_{\Upomega}\rightarrow \hat{\SI}_{\Upomega'}$ 
est un homéomorphisme.  

\end{theo}

\begin{proof}[D\'{e}monstration]  D'après l'{\em Exemple} \ref{solenoidex}, ii), il existe $F$ fini tel que $\Upomega'\subset \Upomega+F$.  Si la suite $x_{i}+\Upomega$ converge dans $\hat{\SI}_{\Upomega}$,  $x_{i}+\Upomega+F$ converge 
aussi dans $\hat{\SI}_{\Upomega+F}$, ce qui implique que $x_{i}+\Upomega'\subset x_{i}+\Upomega+F $ 
converge dans  $\hat{\SI}_{\Upomega'}$.  Donc $ \uprho$ est continue.   
Inversement, supposant que $x_{i}+\Upomega'$ converge dans $\hat{\SI}_{\Upomega'}$,  le m\^{e}me
argument montre que $x_{i}+\Upomega\subset x_{i}+\Upomega'$ converge dans $\hat{\SI}_{\Upomega}$, 
ce qui signifie que $\uprho$ est surjective.  On affirme que $\uprho|_{\hat{\Upomega}}$ est injective.
Supposons au contraire que $\hat{\upgamma}=\lim \upgamma_{i} +\Upomega\not= \hat{\upeta}=\lim \upeta_{i} +\Upomega$ sont des points distincts de $\hat{\Upomega}$ tels que
$\uprho (\hat{\upgamma})=\uprho (\hat{\upeta})$.  On observe alors que, par d\'{e}finition de la topologie, 
la limite $\hat{\upgamma}=\lim \upgamma_{i} +\Upomega$ existe si $\upgamma'_{i}$ est de Cauchy 
et $\hat{\upgamma}\not=\hat{\upeta}$ si et seulement si $(\upgamma_{i}-\upeta_{i})'\not\rightarrow 0$.  
La m\^{e}me chose est vraie en échangeant $\Upomega$ et $\Upomega'$, d'où il suit que 
$\uprho (\hat{\upgamma})=\lim \upgamma_{i}+\Upomega'\not=\lim\upeta_{i}+\Upomega' =\uprho (\hat{\upeta})$, 
contradiction. 
Comme $\Upomega$,
$\Upomega'$ sont relativement denses l'injectivit\'{e} de $\uprho$ en $\hat{\Upomega}$ se prolonge \`{a} 
un voisinage feuillet\'{e} et $\uprho$ est donc bijective et bicontinue dans un voisinage de $0$.
On montre de même l'injectivit\'{e} sur chaque transversale $x+\hat{\Upomega}$, ce qui
prouve que $\uprho$ est un homéomorphisme local.
La préimage $\uprho^{-1}(\hat{\Upomega}')$ est \'{e}gale \`{a} la cl\^{o}ture de
\[  \bigcup_{x\in \Upomega'} (x + \Upomega) \subset \bigcup_{y\in\Upomega, \; f\in F} (y+f+\Upomega) \subset \bigcup_{f\in F} (f+\hat{\Upomega}), \]
qui fournit bien (\ref{preimagefinie}).

Enfin si $\Upomega,\Upomega'$ sont
apériodiques, $\uprho$ induit un homéomorphisme $L_{\Upomega}\approx L_{\Upomega'}$ 
des feuilles canoniques denses ce qui, joint au fait que $\uprho$ est un homéomorphisme local, 
implique qu'il s'agit bien d'un homéomorphisme.
\end{proof}
 
 La propriet\'{e} d'\^{e}tre un presque r\'{e}seau implique que 
  \[ \Upsigma^{n} \Upomega := \Upomega +\cdots +\Upomega\;\;\text{ ($n$ fois)} \]
est aussi un quasicristal (et un id\'{e}al quasicristallin dans le cas o\`{u} $\Upomega$ en est un).  
De plus, $\Upsigma^{n}\Upomega$ est équivalent à $\Upomega$. 
 Le Th\'{e}or\`{e}me \ref{isog} implique qu'il existe une surjection continue
 \[   +:(\hat{\SI}_{\Upomega},\hat{\Upomega})\times(\hat{\SI}_{\Upomega},\hat{\Upomega})\longrightarrow ( \hat{\SI}_{\Upsigma^{2} \Upomega},  \Upsigma^{2}\hat{\Upomega}),\quad (x + \Upomega, y+\Upomega)\longmapsto x+y+\Upsigma^{2}\Upomega, \]
 o\`{u} l'on a identifi\'{e}  $ \Upsigma^{n}\hat{\Upomega}=\widehat{\Upsigma^{n}\Upomega}$.
En g\'{e}n\'{e}ral, pour $\vec{m}\in\N^{k}$ o\`{u} $m_{1}+\cdots +m_{k}=n $, il y a une application
\begin{align}\label{+general} +=+_{\vec{m},n}: (\hat{\SI}_{\Upsigma^{m_{1}}\Upomega},  \Upsigma^{m_{1}}\hat{\Upomega})\times\dots\times (\hat{\SI}_{\Upsigma^{m_{k}}\Upomega},  \Upsigma^{m_{k}}\hat{\Upomega}) \longrightarrow  (\hat{\SI}_{\Upsigma^{n}\Upomega},  \Upsigma^{n}\hat{\Upomega}),\end{align}
induite par 
\[ (x_{1} + \Upomega,\dots ,x_{k}+\Upomega)\longmapsto x_{1} + \cdots + x_{k} + \Upsigma^{n}\Upomega.\]
Si l'on note $+_{3}=+_{\;\vec{1},3}$, $\vec{1}=(1,1,1)$, l'\'{e}galit\'{e} des compositions 
\[    +_{3}(x+\Upomega, y+\Upomega, z+\Upomega)= +(+(x+\Upomega, y+\Upomega), z+\Upomega) =+(x+\Upomega, +(y+\Upomega, z+\Upomega)) , \]
ainsi que des version plus g\'{e}n\'{e}rales nous donnent l'analogue de la loi de l'associativit\'{e}.
On peut donc considérer $\hat{\SI}_{\Upomega}$ comme une g\'{e}n\'{e}ralisation de la notion de  tore, ou mieux 
la collection de soleno\"{\i}des
\[  \{  (\hat{\SI}_{\Upsigma^{n}\Upomega},  \Upsigma^{n}\hat{\Upomega})\}_{n\geq 1}, \]
munie des applications (\ref{+general}).

Pour prouver la continuité de la somme, il faut avoir en main le résultat suivante:

\begin{theo}\label{ConjCont} Soit $\Upomega$ un ensemble modèle avec la fenêtre $D$.  Alors la correspondence 
\begin{align}\label{AppConj} \upalpha\in \Upomega\longmapsto \upalpha'\in D\end{align} induit une application continue et surjective
\[ \hat{\Upomega} \twoheadrightarrow \overline{D} \]
\end{theo}

\begin{proof}[D\'{e}monstration]  La preuve est esencialmente consequence de Proposition 4.3 de \cite{Schlottmann}, qui afirme la continuité de l'application canonique  
\[\upbeta: \left( \hat{\SI}_{\Upomega}, \hat{\Upomega}\right)\longrightarrow\left( \mathcal{F}_{\Upomega},\overline{D}\right) \]
où  $ \mathcal{F}_{\Upomega}$ est une feuilletage de Kronecker qui parametrise d'ensembles modèles ``basés'' sur $\Upomega$ et $\overline{D}$ paraît comme un transversal canonique.  L'application $\upbeta$,
restreint à $\Upomega\subset \hat{\Upomega}$, coincide à la conjugation (\ref{AppConj}).  Un argument détaillé se trouve dans l'Appendice.
\end{proof}

\begin{theo} Soit $\Upomega$ un ensemble modèle. Alors la somme $+_{\vec{m},n}$, définie en {\rm (\ref{+general})}, est continue.
\end{theo}

\begin{proof}[D\'{e}monstration] On considère d'abord la preuve dans le cas particulier ou 
\begin{itemize}
\item[-] $\Upomega$ es una quasicristal modèle de 
dimension 1,
définit par la fenêtre
 $D\subset \mathbb{R}$, 
 et 
 \item[-] $\vec{m} = (1,1)$, $n=2$.
 \end{itemize}
Il suffit démontre que la somme est transversalmente continue: autrement dit, que l'application canonique $\Upomega\times \Upomega\stackrel{+}{\rightarrow} \Upsigma\Upomega$
s'étend à une application continue
\[ \hat{\Upomega}\times \hat{\Upomega}\stackrel{+}{\longrightarrow} \Upsigma\hat{\Upomega} .\]
Soient $\{ \upalpha_{i}+\Upomega \}$, $\{ \upbeta_{i}+\Upomega\}$ des suites dans $\Upomega$ qui convergent à deux points de $\hat{\Upomega}$.  Il faut démontre que 
\begin{align}\label{SuiteSomme}  \{ \upomega_{i}   + \Upsigma\Upomega \} := \{ \upalpha_{i} + \upbeta_{i}  + \Upsigma\Upomega \} \end{align}
est convergente dans $\Upsigma\Upomega$.   
On sait par le Théorème \ref{ConjCont}   que la suite de conjugués $\{ \upomega'_{i}\}$ est convergente à $w\in \overline{\Upsigma D}$, où $\Upsigma D = $ la fenêtre de $\Upsigma\Upomega$, donc, les fenêtres $\upomega'_{i}+\Upsigma D$ associés aux quasicristaux modèles $\upomega_{i}+\Upsigma\Upomega$
convergent.  On peut supposer sans perte de généralité que $\Upsigma D= [-a,a]$.

Supposons que la suite (\ref{SuiteSomme}) ne converge pas dans la topologie quasicristalline;  si l'on note pour $R>0$,
\[  (\upomega_{i} + \Upsigma \Upomega)_{R} :=( \upomega_{i} + \Upsigma \Upomega  )\cap [-R,R] , \]
cela signifie qu'il existe $R>0$ tel que
pour chaque $N\in\N$, ils existent $i_{N}, j_{N}>N$ et $\upomega_{j_{N}} + \upbeta_{j_{N}}\in (\upomega_{j_{N}}+ \Upsigma\Upomega)_{R}$ avec
$\upomega_{j_{N}} + \upbeta_{j_{N}}\not\in (\upomega_{i_{N}}+ \Upsigma\Upomega)_{R}$.   

Puisque $\Upsigma\Upomega$ est aussi un quasicristal, il existe $F\subset\R$ fini tel que $\Upsigma\Upomega +
\Upsigma\Upomega\subset \Upsigma\Upomega +F$.   Donc, en passant à une sous-suite, il existe $f\in F$ avec
\[ \upomega_{j_{N}} + \upbeta_{j_{N}} = \upgamma_{j_{N}} + f \in \Upsigma \Upomega  + \Upsigma \Upomega \subset \Upsigma\Upomega +F.\]
$ \Upsigma \Upomega$ est uniformémente discret, ainsi il n'y a que un nombre fini de posible $\upgamma_{j_{N}}\in \Upsigma\Upomega$ avec $\upgamma_{j_{N}} +f\in [-R,R]$.  Alors, on peut passer à
autre sous-suite  telle que la suite des $\upgamma_{j_{N}}$ est constant.  C'est-à-dire, il existe $\upgamma\in\Upsigma\Upomega$ telle que la suite de contre-exemples à continuité est égale le constante
$\upgamma +f $:
\[ \upomega_{j_{N}} + \upbeta_{j_{N}} = \upgamma +f .\]
Finalente, a autre passage à une sous-suite, on peut supposer que le suite de conjugués $\{ \upomega_{i_{N}}\}$ converge de mode monotone, disons $\upomega_{i_{N}}'\nearrow w$.

Il en résulte la situation suivante: pour chaque $N$, d'une part 
\[  \upomega_{j_{N}}' -a \leq \upgamma' + f'  \leq \upomega_{j_{N}} '+a \]
et de l'autre 
\begin{align}\label{OneOf2Fails} \upomega_{i_{N}} '-a \not\leq  \upgamma' + f'  \quad\text{ou}\quad \upgamma' + f'   \not\leq  \upomega'_{i_{N}} +a. \end{align}
En passant de nouveau à une sous-suite, on peut supposer dans (\ref{OneOf2Fails}) que l'un ou l'autre condition est satisfait pour tout $N$: supposons d'abord qu'il est la
dernière posibilité,  ainsi on a
\begin{align}\label{ContraInequality} \upomega_{i_{N}}' +a<   \upgamma' + f' \leq \upomega_{j_{N}}' +a   .\end{align}  Pour $M > j_{N}$,
nous avons  $i_{M}>j_{N}$, donc $\upomega_{i_{M}}' \geq \upomega_{j_{N}}'$ (par la monotonicité de la suite de conjugués), qui implique que $  \upgamma' + f'\leq    \upomega_{j_{N}}' +a  \leq  \upomega_{i_{M}}' +a $, contradisant
 (\ref{ContraInequality}).  On suppose maintenant la première inégalité dans (\ref{OneOf2Fails}), qui nous donne
\begin{align}\label{ContraInequality2} \upomega_{j_{N}}' -a \leq \upgamma' + f'  <  \upomega_{i_{N}} '-a.\end{align}
 Puis, on prends $M$ telle que $\upomega'_{j_{M}}>\upomega'_{i_{N}}$ et  (\ref{ContraInequality2}) se contradit.
 On conclut que la suite (\ref{SuiteSomme}) converge et la somme s'étend à une application continue de $\hat{\Upomega}\times \hat{\Upomega}$.
La preuve pour $+_{\vec{m},n}$ général est pour l'induction.   Dans le cas de un quasicristal de dimension $>1$ et fenêtre $\subset \R^{k}$, on peut argumenter de manière égale, supposons sans perte de géneralité que
$D$ est un cube  en imposant monotonicité dans chaque coordonné.
\end{proof}

 \'Etant donnée  une suite d'applications de paires 
\[ f_{n}: (\hat{\SI}_{\Upsigma^{n}\Upomega}, \Upsigma^{n}\hat{\Upomega})\longrightarrow   (\hat{\SI}_{\Upsigma^{n}\Upomega}, \Upsigma^{n}\hat{\Upomega})\]
on dira qu'elle définit un {\em endomorphism quasicristallin} si on a des carr\'{e}s commutatifs
\begin{align}\label{endogramgen}
\begin{diagram}
 \hat{\SI}_{\Upsigma^{m_{1}}\Upomega} \times \dots \times  \hat{\SI}_{\Upsigma^{m_{k}}\Upomega} & \rTo^{f_{m_{1}}\times\cdots \times f_{m_{k}}} & \hat{\SI}_{\Upsigma^{m_{1}}\Upomega} \times \dots \times  \hat{\SI}_{\Upsigma^{m_{k}}\Upomega} \\
\dTo^{+_{\vec{m},n}} & & \dTo_{+_{\vec{m},n}} \\
 \hat{\SI}_{\Upsigma^{n}\Upomega}& \rTo_{f_{n}} &  \hat{\SI}_{\Upsigma^{n}\Upomega} ,
\end{diagram}
\end{align}
pour $m_{1}+\cdots +m_{k}=n$.
Un endomorphisme quasicristallin inversible est un {\em isomorphisme quasicristallin}.
On notera 
\[{\rm End}(\hat{\SI}_{\Upomega},\hat{\Upomega}) \]
le mono\"{i}de multiplicatif des endomorphismes, le produit étant d\'{e}fini par la composition.  
Les operations de somme induisent des opérations correspondantes sur les endomorphismes ; 
ainsi on a une opération de somme 
\[  {\rm End}(\hat{\SI}_{\Upomega}, \hat{\Upomega})\times {\rm End}(\hat{\SI}_{\Upomega},\hat{\Upomega})\stackrel{+}{\longrightarrow} {\rm End}(\hat{\SI}_{\Upsigma\Upomega}, \Upsigma\hat{\Upomega}) \]
definie par
\[  f_{n}, g_{n} \longmapsto  f_{n}+g_{n}:\hat{\SI}_{\Upsigma^{n} (\Upsigma\Upomega)}\longrightarrow \hat{\SI}_{\Upsigma^{n} (\Upsigma\Upomega)}.
\]
Autrement dit la collection 
\[ \{  {\rm End}(\hat{\SI}_{\Upsigma^{n}\Upomega}, \Upsigma^{n}\hat{\Upomega}) \}\] 
possède la structure d'un {\em presque anneau} c'est-à-dire une collection de mono\"{\i}des (pour un
certain produit) avec des applications commutatives
\[      {\rm End}( \hat{\SI}_{\Upsigma^{m_{1}}\Upomega}, \Upsigma^{m_{1}}\hat{\Upomega}) \times \dots \times  {\rm End}( \hat{\SI}_{\Upsigma^{m_{k}}\Upomega} ,\Upsigma^{m_{k}}\hat{\Upomega})\stackrel{+}{\longrightarrow }  {\rm End}(\hat{\SI}_{\Upsigma^{n}\Upomega}, \Upsigma^{n}\hat{\Upomega}), \]
où $m_{1}+\cdots +m_{k}=n$ ; ces opérations sont distributives par rapport au produit.
\begin{note}
Par opposition au cas des r\'{e}seaux, il n'est pas n\'{e}cessairement vrai que $N\in\Z$ d\'{e}finit un \'{e}l\'{e}ment de ${\rm End}(\hat{\SI}_{\Upomega}) $. 
On a seulement le ``quasiendomorphisme" :
\[ N: \hat{\SI}_{\Upomega}\longrightarrow \hat{\SI}_{N\Upomega},\quad x+\Upomega\longmapsto Nx +N\Upomega \]
o\`{u} $N\Upomega:=\{ Nx|\; x\in\Upomega\}\subset\Upsigma^{N}\Upomega$.
\end{note}

Consid\'{e}rons le cas de $\mathfrak{a}\subset K\subset\R$ un id\'{e}al quasicristallin mod\`{e}le par rapport \`{a} l'anneau quasicristallin $A=A_{\upsigma}$ de rang 1. La proposition suivante
identifie les sommes quasicristallines d'un tel id\'{e}al quasicristallin mod\`{e}le.

\begin{prop}\label{propdesomme} Soit ${\bf x}\in\R^{r+s-1}$, $u\in \mathcal{O}_{K}^{\times}$, $\mathfrak{a}_{\bf x}=\mathfrak{a}_{\mathfrak{A}, {\bf x} }(u)$. Alors 
\[  \Upsigma^{n}\mathfrak{a}_{\bf x}=\mathfrak{a}_{\bf y},\quad \text{o\`{u}}\quad {\bf y}=-\log_{|\uppi'(u)|}n +{\bf x}.\]  En g\'{e}n\'{e}ral, $\mathfrak{a}_{{\bf x}_{1}}+\mathfrak{a}_{{\bf x}_{2}}=
\mathfrak{a}_{\bf y}$, o\`{u} ${\bf y}=-\log_{\uppi'(u)} (\uppi'(u)^{{\bf x}_{1}}+\uppi'(u)^{{\bf x}_{2}})$.
\end{prop}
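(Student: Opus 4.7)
Le plan est de traiter d'abord la seconde \'egalit\'e (cas de la somme binaire), puis d'en d\'eduire la premi\`ere par r\'ecurrence sur $n$. Posons $v := |\uppi'(u)| \in \R_{>0}^{r+s-1}$ (on a $v<\mathbf{1}$ composante par composante puisque $u$ est PV), et d\'esignons par $\mathbf{y}$ l'unique vecteur v\'erifiant $v^{\mathbf{y}} = v^{\mathbf{x}_1}+v^{\mathbf{x}_2}$ composante par composante. L'inclusion $\mathfrak{a}_{\mathbf{x}_1}+\mathfrak{a}_{\mathbf{x}_2} \subseteq \mathfrak{a}_{\mathbf{y}}$ se d\'eduit directement de l'in\'egalit\'e triangulaire appliqu\'ee composante par composante : pour $\upalpha_i \in \mathfrak{a}_{\mathbf{x}_i}$ on a $\upalpha_1+\upalpha_2 \in \mathfrak{A}$ et $|\uppi'(\upalpha_1+\upalpha_2)| \leq |\uppi'(\upalpha_1)|+|\uppi'(\upalpha_2)| < v^{\mathbf{x}_1}+v^{\mathbf{x}_2} = v^{\mathbf{y}}$, d'o\`u $\upalpha_1+\upalpha_2 \in \mathfrak{a}_{\mathfrak{A},\mathbf{y}}(u) = \mathfrak{a}_{\mathbf{y}}$.

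La r\'eciproque constitue le c\oe{}ur de la preuve. \'Etant donn\'e $\upgamma \in \mathfrak{a}_{\mathbf{y}}$, le but est de produire $\upalpha_0 \in \mathfrak{A}$ v\'erifiant simultan\'ement $|\uppi'(\upalpha_0)| < v^{\mathbf{x}_1}$ et $|\uppi'(\upgamma-\upalpha_0)| < v^{\mathbf{x}_2}$ : la d\'ecomposition $\upgamma = \upalpha_0 + (\upgamma-\upalpha_0)$ fournira alors l'\'ecriture cherch\'ee dans $\mathfrak{a}_{\mathbf{x}_1}+\mathfrak{a}_{\mathbf{x}_2}$. Les $z \in K_{\upsigma'}$ satisfaisant $|z| < v^{\mathbf{x}_1}$ et $|\uppi'(\upgamma)-z| < v^{\mathbf{x}_2}$ forment un ouvert $U \subset K_{\upsigma'}$ : c'est l'intersection de deux produits de boules ouvertes centr\'ees respectivement en $0$ et en $\uppi'(\upgamma)$, non vide pr\'ecis\'ement par la condition $|\uppi'(\upgamma)| < v^{\mathbf{x}_1}+v^{\mathbf{x}_2}$, c'est-\`a-dire par l'hypoth\`ese $\upgamma \in \mathfrak{a}_{\mathbf{y}}$.

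Le point cl\'e --- et le seul vrai obstacle --- est alors d'invoquer la densit\'e de $\uppi'(\mathfrak{A})$ dans $K_{\upsigma'}$ pour garantir l'existence d'un tel $\upalpha_0 \in \mathfrak{A}$ avec $\uppi'(\upalpha_0) \in U$. Cette densit\'e r\'esulte de ce que $\mathfrak{A}$ est un r\'eseau de rang $d=[K:\Q]$ dans $K_\infty = K_{\upsigma} \oplus K_{\upsigma'}$, la restriction de $\uppi'$ \`a $\mathfrak{A}$ \'etant injective tandis que $\dim_{\R} K_{\upsigma'} < d$ ; cet argument a d\'ej\`a \'et\'e employ\'e plus haut, notamment dans la preuve de la densit\'e de ${\sf Z}_0$ dans ${\sf Z}$.

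Il reste enfin \`a d\'eduire la formule pour $\Upsigma^n\mathfrak{a}_{\mathbf{x}}$ par r\'ecurrence sur $n$ : en appliquant le cas binaire \`a $\Upsigma^{k}\mathfrak{a}_{\mathbf{x}}+\mathfrak{a}_{\mathbf{x}}$, on obtient $v^{\mathbf{y}_{k+1}} = v^{\mathbf{y}_k}+v^{\mathbf{x}} = (k+1)\,v^{\mathbf{x}}$, et la relation annonc\'ee s'ensuit par passage au logarithme composante par composante.
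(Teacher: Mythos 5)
Votre preuve est correcte et suit pour l'essentiel la m\^{e}me d\'{e}marche que celle du texte : l'inclusion facile par l'in\'{e}galit\'{e} triangulaire, puis l'inclusion r\'{e}ciproque en utilisant la densit\'{e} de $\uppi'(\mathfrak{A})$ dans $K_{\upsigma'}$ pour d\'{e}couper un $\upgamma\in\mathfrak{a}_{\mathbf{y}}$ donn\'{e} en deux termes dont les conjugu\'{e}s tombent dans les fen\^{e}tres voulues (le texte ne r\'{e}dige que le cas $K$ quadratique r\'{e}el et $\mathfrak{a}_{x}+\mathfrak{a}_{x}$, le cas g\'{e}n\'{e}ral \'{e}tant d\'{e}clar\'{e} analogue, tandis que vous traitez directement le cas binaire g\'{e}n\'{e}ral puis concluez par r\'{e}currence). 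Signalons seulement que votre justification entre parenth\`{e}ses de cette densit\'{e} (r\'{e}seau de rang $d$ s'injectant dans un espace de dimension $<d$) ne donne \`{a} elle seule que le caract\`{e}re non discret de l'image et non sa densit\'{e}, fait classique que le texte admet lui aussi sans d\'{e}monstration.
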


\begin{proof}[D\'{e}monstration] On donne la preuve dans le cas de $K/\Q$ quadratique r\'{e}elle ; le cas g\'{e}n\'{e}ral 
est tout à fait analogue.
Par d\'{e}finition, $\mathfrak{a}_{x}+\mathfrak{a}_{x}\subset \mathfrak{a}_{-\log_{\uppi'(u)}2+x}$.  
Soit alors $\upgamma\in  \mathfrak{a}_{x-\log_{\uptheta}2}$ ; 
on peut supposer que $\upgamma'>0$ sans perte de généralité. On a donc $\upgamma'<2\uptheta^{-x}$.  
Soit $\upalpha\in\mathfrak{a}_{x}$ tel que $\upalpha'>0$ et soit $\upbeta=\upgamma-\upalpha$ avec
$0<\upbeta'$.  On peut choisir $\upalpha$ tel que $\upbeta'<\uptheta^{-x}$, parce que l'ensemble 
de $\upalpha'$, $\upalpha\in\mathfrak{a}_{x}$,
est dense dans la fen\^{e}tre $D_{x}$. 
Alors $\upgamma=\upalpha+\upbeta\in\mathfrak{a}+\mathfrak{a}$.  
\end{proof}

La pertinence de la Proposition \ref{propdesomme} tient à ce que si l'on suppose 
l'injectivit\'{e} (conjecturale) de l'invariant modulaire $j$ dans ${\sf Cl}^{\rm mod}(A)$, 
alors $j(\mathfrak{a})\not=j(\Upsigma^{n}\mathfrak{a})$ pour tout $n\ne 1$.  Ceci implique que la structure 
que l'on d\'{e}veloppe ici en partant des collections de sommes de soleno\"{\i}des 
$\{( \hat{\SI}_{\Upsigma^{n}\mathfrak{a}}\,\Upsigma^{n}\hat{\mathfrak{a}})\}$ 
(ou de manière équivalente les sommes $\{ \Upsigma^{n}\mathfrak{a}\}$), cette structure ne produit pas un,
mais un ensemble d'invariants modulaires, tout comme l'invariant modulaire quantique de la Section \ref{modinvsec}.

Notons maintenant qu'il existe une action de $A$ sur $(\hat{\SI}_{\mathfrak{a}},\hat{\mathfrak{a}})$ donnée par
\[ \upalpha :(\hat{\SI}_{\mathfrak{a}},\hat{\mathfrak{a}})\longrightarrow (\hat{\SI}_{\mathfrak{a}},\hat{\mathfrak{a}}),\quad x+ \mathfrak{a}\longmapsto \upalpha x + \upalpha\mathfrak{a}\longmapsto \upalpha x + \mathfrak{a} \]
o\`{u} l'on utilise l'inclusion $\upalpha\mathfrak{a}\subset\mathfrak{a}$ pour d\'{e}finir la deuxi\`{e}me application
\footnote{Ici il faut remarquer que pour $\upalpha\not=0$, $\upalpha\mathfrak{a}$ et $\mathfrak{a}$ sont
\'{e}quivalents : dans le cas où $\mathfrak{a}=\mathfrak{a}_{x}$, ceci est conséquence du fait que {\it tous} 
les ensembles mod\`{e}les d\'{e}finis à l'aide d'un m\^{e}me r\'{e}seau sont \'{e}quivalents (voir \cite{Meyer}, \cite{Moody}).}.
Cette action s'étend à tous les $\Upsigma^{n}\mathfrak{a}$ et on
a le carr\'{e} commutatif suivant :
\begin{align}\label{endogram}
\begin{diagram}
(\hat{\SI}_{\Upsigma^{m_{1}}\mathfrak{a}}, \Upsigma^{m_{1}}\hat{\mathfrak{a}})\times\cdots\times(\hat{\SI}_{\Upsigma^{m_{k}}\mathfrak{a}}, \Upsigma^{m_{k}}\hat{\mathfrak{a}})  & \rTo^{\upalpha\times\dots\times\upalpha} & (\hat{\SI}_{\Upsigma^{m_{1}}\mathfrak{a}}, \Upsigma^{m_{1}}\hat{\mathfrak{a}})\times\cdots\times(\hat{\SI}_{\Upsigma^{m_{k}}\mathfrak{a}}, \Upsigma^{m_{k}}\hat{\mathfrak{a}})  \\
\dTo^{+} & & \dTo_{+} \\
(\hat{\SI}_{\Upsigma^{n}\mathfrak{a}}, \Upsigma^{n}\hat{\mathfrak{a}}) & \rTo_{\upalpha} & (\hat{\SI}_{\Upsigma^{n}\mathfrak{a}}, \Upsigma^{n}\hat{\mathfrak{a}}).
\end{diagram}
\end{align}
Autrement dit chaque $\upalpha\in A$ d\'{e}finit un \'{e}l\'{e}ment de ${\rm End}(\hat{\SI}_{\mathfrak{a}})$, 
d'où l'on tire une injection $A\hookrightarrow {\rm End}(\hat{\SI}_{\mathfrak{a}})$ de (presque) anneaux.
De plus pour tout $f=\{ f_{n}\}\in {\rm End}(\hat{\SI}_{\mathfrak{a}})$, les restrictions de $f_{n}$ \`{a} la feuille canonique $L_{\Upsigma^{n}\mathfrak{a}}\approx\R\subset\hat{\SI}_{\Upsigma^{n}\mathfrak{a}}$ définissent des
endomorphismes de $\R$ et c'est ainsi que l'on peut identifier  $A\subset {\rm End}(\hat{\SI}_{\mathfrak{a}})\subset\R$.  
Cependant pour que $r\in\R$ représente une telle restriction on doit avoir $r\mathfrak{a}\subset\mathfrak{a}$, qui implique
que $r\in \mathcal{O}_{K}$.  Mais ce sont pr\'{e}cisement les \'{e}l\'{e}ments de $A\subset\mathcal{O}_{K}$ qui 
pr\'{e}servent $\mathfrak{a}$ (\`{a} cause de sa propriét\'{e} caract\'{e}ristique $|\upgamma'|\leq 1$).  
Nous avons ainsi d\'{e}montr\'{e} le

\begin{theo} ${\rm End}(\hat{\SI}_{\mathfrak{a}}, \hat{\mathfrak{a}})=A$.
\end{theo}

Par analogie avec la th\'{e}orie classique des courbes elliptiques on dira que $\hat{\SI}_{\mathfrak{a}}$ 
est à {\em multiplication quasicristalline} par rapport \`{a} $A$
et que $\hat{\SI}_{\mathfrak{a}}$ est un {\em $ A$-module quasicristallin}.

Si $\mathfrak{m}\subset A$ est un id\'{e}al quasicristallin entier, on dit que $x\in \hat{\SI}_{\mathfrak{a}}$ 
est un point de {\em $\mathfrak{m}$-torsion} si pour tout $\upalpha\in\mathfrak{m}$
\[ \upalpha x\in \hat{\mathfrak{a}}.\]
On note $(\hat{\SI}_{\mathfrak{a}},\hat{\mathfrak{a}})[\mathfrak{m}]$ l'ensemble des points de $\mathfrak{m}$-torsion.

Remarquons que pour $\mathfrak{m}= A$, on a $(\hat{\SI}_{\mathfrak{a}},\hat{\mathfrak{a}})[A]=\hat{\mathfrak{a}}$ ; 
c'est l'analogue du fait que, pour un tore $\T=\C/\Uplambda$ à Multiplication Complexe par rapport \`{a} 
l'anneau de entiers $\mathcal{O}_{K}$, l'ensemble $\T[\mathcal{O}_{K}]$  des points de 
$\mathcal{O}_{K}$-torsion est réduit à $\{ 0\}$.

 La torsion est clairement compatible avec la somme, induisant  une application 
\[ +: (\hat{\SI}_{\mathfrak{a}},\hat{\mathfrak{a}})[\mathfrak{m}]\times  (\hat{\SI}_{\mathfrak{a}},\hat{\mathfrak{a}})[\mathfrak{m}]\longrightarrow (\hat{\SI}_{\Upsigma\mathfrak{a}}, \Upsigma\hat{\mathfrak{a}})[\mathfrak{m}] \]
qui est distributive par rapport \`{a} l'action de $A$.  Autrement dit la collection des paires
$\{  (\hat{\SI}_{\Upsigma^{n}\mathfrak{a}}, \Upsigma^{n}\hat{\mathfrak{a}})[\mathfrak{m}]\}_{n\geq 1}$ 
constitue un {\em presque $A$-module} :  pour chaque partition $m_{1}+\cdots +m_{k}=n$ on a l'operation
\[     (\hat{\SI}_{\Upsigma^{m_{1}}\mathfrak{a}}, \Upsigma^{m_{1}}\hat{\mathfrak{a}})[\mathfrak{m}] \times \dots \times (\hat{\SI}_{\Upsigma^{m_{k}}\mathfrak{a}}, \Upsigma^{m_{k}}\hat{\mathfrak{a}})[\mathfrak{m}])\stackrel{+}{\longrightarrow } (\hat{\SI}_{\Upsigma^{n}\mathfrak{a}}, \Upsigma^{n}\hat{\mathfrak{a}})[\mathfrak{m}], \]
qui est distributive par rapport à l'action multiplicative de $A$.

Nous terminerons avec une discussion de constructions associ\'{e}es au soléno\"{\i}de d'un quasicristal,
lesquelles constituent une tentative de complétion dudit solénoïde par rapport \`{a} la somme. 
Ces constructions sont motiv\'{e}es par l'envie de pouvoir travailler avec de ``vrais'' anneaux ou id\'{e}aux, 
par exemple l'anneau $\langle A\rangle $ ou l'id\'{e}al $\langle\mathfrak{a}\rangle$ engendr\'{e} par l'anneau 
quasicristallin $A$ ou un $A$-id\'{e}al $\mathfrak{a}$ : malheureusement, comme nous l'avons vu, 
ils sont tous deux \'{e}gaux \`{a} l'anneau $\mathcal{O}_{K}$ lui-m\^eme, le probl\`{e}me étant que ce dernier 
anneau n'est pas suffisamment grand pour distinguer les anneaux engendr\'{e}s par $A$ et les $\mathfrak{a}$.

 La premi\`{e}re construction est la plus \'{e}vidente : il s'agit de prendre la limite du syst\`{e}me direct des 
 {\it projections} entre les soléno\"{\i}des sommes :
\[ \check{\mathbb{S}}_{\mathfrak{a}}:= \lim_{\longrightarrow}\left(  \hat{\mathbb{S}}_{\mathfrak{a}}\rightarrow \hat{\mathbb{S}}_{\Upsigma^{2}\mathfrak{a}}\rightarrow \cdots \right) . \]
D'après la Proposition \ref{propdesomme},
 \begin{align}\label{limitedessommes} \bigcup \Upsigma^{n}\mathfrak{a}=\mathcal{O}_{K},\end{align}
et donc la limite des transversales 
\[  \check{\mathfrak{a}}:=\lim_{\longrightarrow} \Upsigma^{n}\hat{\mathfrak{a}} \]
contient l'image de $\mathcal{O}_{K}\subset\R\approx L_{\mathfrak{a}}\subset\hat{\SI}_{\mathfrak{a}}$ dans 
$ \check{\mathbb{S}}_{\mathfrak{a}}$, o\`{u} $L_{\mathfrak{a}}$ est la feuille canonique.  
En particulier, $\check{\mathfrak{a}}\subset  \check{\mathbb{S}}_{\mathfrak{a}}$ est dense ; par contre, $\check{\mathfrak{a}}\not=  \check{\mathbb{S}}_{\mathfrak{a}}$ parce que l'image $L_{\check{\mathfrak{a}}}$ de $L_{\mathfrak{a}}$ en $ \check{\mathbb{S}}_{\mathfrak{a}}$ satisfait $L_{\check{\mathfrak{a}}}\cap \check{\mathfrak{a}}=\mathcal{O}_{K}$.  
En particulier $\check{\mathfrak{a}}$  ne d\'{e}finit pas un solénoïde transverse. N\'{e}anmoins
on consid\`{e}rera la paire $(\check{\mathbb{S}}_{\mathfrak{a}},  \check{\mathfrak{a}})$ comme un cas limite ou singulier.

\begin{theo}\label{torequant1} $(\check{\mathbb{S}}_{\mathfrak{a}},  \check{\mathfrak{a}})$ est une paire de $\mathcal{O}_{K}$-modules.
Le quotient $\check{\mathbb{S}}_{\mathfrak{a}}/\check{\mathfrak{a}}$ est isomorphe
 au tore quantique 
$\T(\uptheta ):= \R/\mathcal{O}_{K}$, o\`{u} $\mathcal{O}_{K}=\Z [\uptheta ]$.
\end{theo}
\begin{proof}[D\'{e}monstration] Il est clair que $ \check{\mathbb{S}}_{\mathfrak{a}}$ est un groupe : 
si $x\in  \hat{\mathbb{S}}_{\Upsigma^{m} \mathfrak{a}}, y\in  \hat{\mathbb{S}}_{\Upsigma^{n} \mathfrak{a}}$, $m\leq n$,
la somme $x+y$ est d\'{e}finie dans $\hat{\mathbb{S}}_{\Upsigma^{n} \mathfrak{a}}$ en $y$, en envoyant $x$ par 
l'application $ \hat{\mathbb{S}}_{\Upsigma^{m} \mathfrak{a}}\rightarrow  \hat{\mathbb{S}}_{\Upsigma^{n}\mathfrak{a}} $  
ce qui donne un \'{e}l\'{e}ment de $\hat{\mathbb{S}}_{\Upsigma^{2m} \mathfrak{a}}$.  Cette d\'{e}finition est consistante 
dans la mesure où elle est compatible avec toutes les applications qui d\'{e}finissent la limite.  La somme fournit
donc une opération binaire qui est commutative et inversible dans $ \check{\mathbb{S}}_{\mathfrak{a}}$.  
Il est clair que la limite $\check{\mathfrak{a}}$ est ferm\'{e}e par rapport \`{a} la somme.  
\`{A} cause de (\ref{limitedessommes}), l'action naturel de $\mathcal{O}_{K}$ sur $\check{\SI}_{\mathfrak{a}}$ 
est continue et respecte le sous-groupe $\check{\mathfrak{a}}$.
 De plus il y a un syst\`{e}me  compatible d'épimorphismes vers le tore quantique 
 \[ \hat{\mathbb{S}}_{ \Upsigma^{n}\mathfrak{a}}\longrightarrow \T(\uptheta ):= \R/\mathcal{O}_{K}\]  induit par
 $x+\Upsigma^{n}\mathfrak{a}\mapsto x+\mathcal{O}_{K}$, o\`{u} l'on observe que cette application
 identifie la transversale $x+\Upsigma^{n}\hat{\mathfrak{a}}$ passant par $x$
 au point $x+\mathcal{O}_{K}\in \T (\uptheta )$.  Ce syst\`{e}me induit \`{a} son tour un épimorphisme
 $\check{\mathbb{S}}_{\mathfrak{a}}\rightarrow \T (\uptheta )$ de noyau $\check{\mathfrak{a}}$.
 \end{proof}

 Le deuxi\`{e}me construction est une variante de celle-ci dans le contexte de l'analyse non standard,
 le point étant qu'il est possible de r\'esoudre le prob\`eme de distinguer $\langle A\rangle$ de $\mathcal{O}_{K}$ 
 en travaillant dans un mod\`{e}le non standard de ce dernier anneau.  Pour un ensemble infini $X$ on note
 $\bast X$ l'ultrapuissance de $X$ par rapport \`{a} un ultrafiltre non principal $\mathfrak{u}\subset {\sf 2}^{\N}$ fix\'{e}.
 Par d\'{e}finition on a donc
\[  \bast X:= X^{\N}/\sim_{\mathfrak{u}} \]
o\`{u}
\[   \{x_{i}\}\sim_{\mathfrak{u}}  \{x'_{i}\}  \Longleftrightarrow \{ i|\; x_{i}=x_{i}'\} \in\mathfrak{u}. \]
Alors l'ultrapuissance $\bast \mathcal{O}_{K}$ est un anneau  et une {\it extension \'{e}l\'{e}mentaire} 
de $\mathcal{O}_{K}$ (\cite{Las}), ce qui implique que $\bast \mathcal{O}_{K}$ et $\mathcal{O}_{K}$ sont 
indistinguables par les propositions de la logique du premier ordre.

Soit $\bast  A\subset \bast\mathcal{O}_{K}$ l'ultrapuissance de $A$  et consid\'{e}rons l'anneau engendr\'{e} par $\bast A$ :
\[   \langle \bast A\rangle :=  \bigcup_{n\geq 1} \Upsigma^{n}\bast \! A.\]
\begin{prop} $ {\rm Frac}( \langle\bast \!A\rangle)=\bast K$.\end{prop}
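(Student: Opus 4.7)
Le plan est de d\'ecomposer l'\'egalit\'e en deux inclusions. L'inclusion $\mathrm{Frac}(\langle\bast A\rangle) \subset \bast K$ est imm\'ediate puisque $\langle\bast A\rangle \subset \bast \mathcal{O}_K \subset \bast K$. Pour la r\'eciproque, on observe d'abord que $\bast K = \mathrm{Frac}(\bast\mathcal{O}_K)$ : ceci r\'esulte du th\'eor\`eme de \L o\'s appliqu\'e \`a l'\'enonc\'e du premier ordre affirmant que $K$ est le corps des fractions de $\mathcal{O}_K$. Il suffit donc de montrer que tout \'el\'ement de $\bast\mathcal{O}_K$ appartient \`a $\mathrm{Frac}(\langle\bast A\rangle)$, et en fait on va prouver l'inclusion plus forte $\bast\mathcal{O}_K \subset \mathrm{Frac}(\bast A)$.

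Le point clef consiste \`a exploiter une unit\'e PV $u \in A$ (dont l'existence est garantie par la th\'eorie de Pisot, voir plus haut). Puisque $|\uppi'(u)| < {\bf 1}$ composante par composante, on a pour tout $\upalpha \in \mathcal{O}_K$,
\[ |\uppi'(u^{n}\upalpha)| = |\uppi'(u)|^{n}\cdot |\uppi'(\upalpha)| \longrightarrow \boldsymbol 0 \quad\text{lorsque } n\to\infty, \]
ce qui entra\^\i ne que $u^{n}\upalpha \in A$ d\`es que $n$ est suffisamment grand. Autrement dit, l'\'enonc\'e
\[ \forall\upalpha\in\mathcal{O}_{K},\;\exists n\in\N,\; u^{n}\upalpha\in A \]
est vrai. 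Comme cet \'enonc\'e est du premier ordre dans le langage des anneaux, enrichi d'un pr\'edicat pour $A$ et d'une constante pour $u$, il se transf\`ere par \L o\'s : pour tout $\upalpha \in \bast\mathcal{O}_K$, il existe $n \in \bast\N$ (\'eventuellement non standard) tel que $u^{n}\upalpha \in \bast A$. Comme $u^{n}$ est lui aussi dans $\bast A$ par transfert, on obtient
\[ \upalpha = \frac{u^{n}\upalpha}{u^{n}} \in \mathrm{Frac}(\bast A) \subset \mathrm{Frac}(\langle\bast A\rangle), \]
ce qui conclut.

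Le principal point d\'elicat, plus conceptuel que technique, est de rep\'erer qu'il suffit de raisonner au niveau du corps des fractions. En effet, l'anneau $\langle\bast A\rangle = \bigcup_{n\in\N}\Upsigma^{n}\bast A$ est en g\'en\'eral \emph{strictement plus petit} que $\bast\mathcal{O}_K$, car l'op\'eration $\bast$ ne commute pas avec les unions croissantes infinies : un \'el\'ement de $\bast\mathcal{O}_K$ n'est pas a priori somme d'un nombre \emph{standard} d'\'el\'ements de $\bast A$. L'absorption d'un \'el\'ement arbitraire de $\bast\mathcal{O}_K$ par une puissance (potentiellement non standard) de $u$, rendue licite par le transfert, contourne pr\'ecis\'ement cet obstacle en travaillant modulo quotient et non modulo somme.
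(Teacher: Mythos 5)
Votre preuve est correcte, mais elle suit un chemin sensiblement diff\'erent de celui de l'article. L'article invoque le th\'eor\`eme de Salem pour \'ecrire $K=\Q(\uptheta)$ avec $\uptheta$ de PV, puis montre s\'epar\'ement que les puissances $\uptheta^{\bast n}$ appartiennent \`a $\mathcal{K}:={\rm Frac}(\langle\bast A\rangle)$ et que $\bast\Z\subset\mathcal{K}$, en absorbant un entier non standard $\bast a$ par une puissance assez grande de $\uptheta$ (argument men\'e directement sur les repr\'esentants de l'ultrapuissance), pour conclure $\bast K=\bast\Q(\uptheta)\subset\mathcal{K}$. Vous court-circuitez Salem : en utilisant seulement une unit\'e de PV $u\in A$ (dont l'existence est d\'ej\`a acquise dans l'article) et l'absorption $u^{n}\upalpha\in A$ pour $n$ grand, transf\'er\'ee au cadre non standard, vous obtenez directement $\bast\mathcal{O}_{K}\subset{\rm Frac}(\bast A)$, c'est-\`a-dire l'\'enonc\'e plus fort que tout \'el\'ement de $\bast\mathcal{O}_{K}$ est quotient de deux \'el\'ements de $\bast A$ lui-m\^eme, et non seulement de l'anneau engendr\'e ; combin\'e avec $\bast K={\rm Frac}(\bast\mathcal{O}_{K})$ (transfert), cela conclut. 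Votre approche est plus \'economique (pas de g\'en\'erateur PV du corps, pas de d\'ecomposition via $\bast\Q$ et $\uptheta$) et se g\'en\'eralise telle quelle \`a tout $A_{\upsigma}$ ; celle de l'article a l'avantage de rester dans le langage explicite des puissances d'un g\'en\'erateur. Seule petite r\'eserve de forme : l'\'enonc\'e $\forall\upalpha\in\mathcal{O}_{K}\,\exists n\in\N\,(u^{n}\upalpha\in A)$ n'est pas du premier ordre dans le seul langage des anneaux enrichi d'un pr\'edicat pour $A$ et d'une constante pour $u$, car il quantifie sur $\N$ et utilise l'exponentiation $n\mapsto u^{n}$ ; il faut soit travailler dans une structure (ou superstructure) contenant aussi $\N$ et cette application, soit argumenter directement sur les repr\'esentants, comme le fait l'article : si $\upalpha=[(\upalpha_{i})]$, on choisit $n_{i}$ avec $u^{n_{i}}\upalpha_{i}\in A$ et on pose $n=[(n_{i})]$. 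C'est un point de r\'edaction, pas une lacune de fond.
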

\begin{proof}[D\'{e}monstration] 
On sait par le travail de Salem que $K=\Q (\uptheta)$ o\`{u} $\uptheta$ est de PV, donc $\bast K=\bast\Q (\uptheta )$.   
Soit $\mathcal{K}:= {\rm Frac}( \langle\bast \!A\rangle)$.  
Puisque $A$ consiste en tous les nombres PVS (complexes), 
\[ \mathcal{K}\supset \Q\{\uptheta^{\bast  n}|\; \bast n\in\bast\Z \}.\] 
Reste à montrer que $\bast\Q \subset\mathcal{K}$.  Mais si $\bast  a\in\Z$ est arbitraire, 
$\bast  a\uptheta^{\bast \!n}\in\bast A$ si $\bast \!n$ est assez grand, parce que 
$(\bast  a\uptheta^{\bast \!n})'=\bast  a(\uptheta')^{\bast \!n}$ est la classe d'une suite ayant 
des valeurs absolues $\leq 1$. Donc $\bast \!a\in
\mathcal{K}$.  D'où $\bast\Z\subset \mathcal{K}$, puis $\bast\Q\subset \mathcal{K}$.\end{proof}

De m\^{e}me, pour tout $A$-id\'{e}al quasicristallin $\mathfrak{a}$, on d\'efinit le $\langle\bast A\rangle$-id\'{e}al 
\[   \langle \bast  \mathfrak{a}\rangle :=  \bigcup_{n\geq 1}  \Upsigma^{n}\bast \mathfrak{a}.\]

Les intersections ou unions d'ensembles d\'{e}nombrables qui sont triviales dans le mod\`{e}le standard 
$\mathcal{O}_{K}$ peuvent devenir non triviales dans $\bast \mathcal{O}_{K}$.  En effet :

\begin{prop}  $\langle \bast A\rangle\subsetneq\bast\mathcal{O}_{K}$.  Si ${\bf x}<{\bf y}$, alors
$ \langle \mathfrak{a}_{\bf y}\rangle \supsetneq  \langle \mathfrak{a}_{\bf x}\rangle$.
\end{prop}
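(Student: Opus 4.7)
The proposition splits naturally into two assertions, which I address in turn.

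\emph{First assertion $\langle \bast A\rangle \subsetneq \bast \mathcal{O}_K$.} The plan is to bound $|\uppi'|$ on $\langle \bast A\rangle$ and then exhibit an element of $\bast \mathcal{O}_K$ violating that bound. Since $|\uppi'(\upalpha)|\leq \mathbf{1}$ for every $\upalpha\in A$ by definition of $A=A_\upsigma$, the transfer principle yields the same inequality for every $\upalpha\in \bast A$; iterating the triangle inequality over a standard-finite sum, every element of $\Upsigma^{n}\bast A$ satisfies $|\uppi'|\leq n\cdot\mathbf{1}$. Consequently
\[
\langle \bast A\rangle \;\subseteq\; \{\upalpha\in \bast \mathcal{O}_K \,:\, |\uppi'(\upalpha)|\leq n \text{ for some standard } n\in\N\}.
\]
To produce a witness outside this ``finite layer'', I would take $v=u^{-1}$, where $u$ is the PV unit fixed in \S\ref{anneauxqc}; then $v\in\mathcal{O}_K^{\times}$ with $|\uppi'(v)|>\mathbf{1}$ in every component, and by transfer $v^{N}\in\bast\mathcal{O}_K$ for every $N\in\bast\N\setminus\N$. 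Since $|\uppi'(v^{N})|=|\uppi'(v)|^{N}$ is nonstandard unbounded, $v^{N}\notin\langle \bast A\rangle$.

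\emph{Second assertion.} For the strict inclusion of the ideals associated to ${\bf x}<{\bf y}$, the same philosophy operates at a finer scale. By the nonstandard transfer of Proposition \ref{propdesomme}, $\Upsigma^{n}\bast\mathfrak{a}_{\bf x} = \bast\mathfrak{a}_{{\bf x}+\log_{|\uppi'(u)|}n}$; its elements are precisely those $\upalpha\in\bast\mathcal{O}_K$ with $|\uppi'(\upalpha)|<n|\uppi'(u)|^{\bf x}$. Hence
\[
\langle \bast\mathfrak{a}_{\bf x}\rangle \;=\; \bigcup_{n\in\N} \bast\mathfrak{a}_{{\bf x}+\log_{|\uppi'(u)|}n},
\]
which I view as the ``$|\uppi'(u)|^{\bf x}$-layer'' of $\bast\mathcal{O}_K$. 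To separate the layers corresponding to ${\bf x}$ and ${\bf y}$ I would invoke the saturation of the ultrapower, together with the density of $\uppi'(\mathcal{O}_K)$ in $K_{\{\upsigma\}'}$, to produce a nonstandard $\upalpha\in\bast\mathcal{O}_K$ whose $|\uppi'(\upalpha)|$ is calibrated to sit in one layer while escaping every standard multiple of the window defining the other.

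\emph{Main obstacle.} The first assertion is essentially one application of transfer plus the triangle inequality, once a non-PV unit is identified. The real work is the second part: the separation of layers is a genuinely nonstandard phenomenon---inside $\mathcal{O}_K$ itself one has $\bigcup_{n}\Upsigma^{n}\mathfrak{a}_{\bf x}=\mathcal{O}_K$ regardless of ${\bf x}$ by equation~(\ref{limitedessommes})---and its execution demands a careful saturation argument producing an element whose $\uppi'$-magnitude simultaneously respects one window and exceeds every standard multiple of the other, a calibration that exploits the ``richesse'' of $\bast\mathcal{O}_K$ beyond the ordinary ring $\mathcal{O}_K$.
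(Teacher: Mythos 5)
Your treatment of the first assertion is correct and complete: the bound $|\uppi'|\leq n\cdot{\bf 1}$ on $\Upsigma^{n}\bast A$ obtained by transfer, plus the witness $u^{-N}$ with $N\in\bast\N\setminus\N$, does the job. It is a legitimate (and more explicit) variant of the paper's argument, which instead takes the class of a diagonal sequence $\upalpha_{i}\notin\Upsigma^{i}A$ and notes that for each standard $n$ the set of indices $i$ with $\upalpha_{i}\in\Upsigma^{n}A$ is finite, hence not in the ultrafilter.

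The second assertion is where the proposal breaks down, and the problem is not merely that the saturation argument is left unexecuted. Your own identification, obtained by transferring Proposition \ref{propdesomme}, says that $\langle\bast\mathfrak{a}_{\bf x}\rangle$ is exactly the set of $\upalpha\in\bast\mathcal{O}_{K}$ with $|\uppi'(\upalpha)|<n\,|\uppi'(u)|^{\bf x}$ for some standard $n$. But ${\bf x}$ and ${\bf y}$ are standard vectors, so the two families of windows differ by the fixed standard factor $|\uppi'(u)|^{{\bf x}-{\bf y}}$: any $\upalpha$ with $|\uppi'(\upalpha)|<n|\uppi'(u)|^{\bf x}$ also satisfies $|\uppi'(\upalpha)|<m|\uppi'(u)|^{\bf y}$ for a suitable standard $m$, and conversely. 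Hence the two ``layers'' you propose to separate are literally the same subset of $\bast\mathcal{O}_{K}$, and no saturation argument can produce an element lying in one while escaping every standard multiple of the window of the other---the conditions you wish to satisfy are jointly inconsistent, not merely infinite in number. Note also that, since $\mathfrak{a}_{\bf y}\subset\mathfrak{a}_{\bf x}$ when ${\bf x}<{\bf y}$, the inclusion $\langle\bast\mathfrak{a}_{\bf y}\rangle\subseteq\langle\bast\mathfrak{a}_{\bf x}\rangle$ is automatic, so the direction of the strict containment must be pinned down before anything is proved; your sketch never does this. The paper's own proof of this half is only the remark that the diagonal argument is ``identique'', i.e.\ one should exhibit $\upalpha_{i}$ lying in one ideal but outside $\Upsigma^{i}$ of the other, and your layer computation in fact shows that such witnesses cease to exist once $i$ exceeds the standard constant $\max_{k}|\uppi'(u)|_{k}^{x_{k}-y_{k}}$. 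So the difficulty you defer under ``main obstacle'' is real, but it is an obstruction to the statement as you have reduced it, not something the ``richesse'' of $\bast\mathcal{O}_{K}$ can overcome: as written the second half is unproven, and your reduction, pushed to its conclusion, yields equality of the two hulls rather than a strict inclusion.
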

\begin{proof}[D\'{e}monstration] 
Soit 
\[ \bast\upalpha=\bast \{ \upalpha_{i}\}\in \bast \mathcal{O}_{K} \text{ tel que } \upalpha_{i}\not\in \Upsigma^{i}A \text{ pour tout }i\in \N.\] 
Comme $\Upsigma^{n}A\subsetneq \Upsigma^{n+1}A$ pour tout $n\in\N$, $\upalpha_{j}\not\in \Upsigma^{i}A$ pour tout $j\geq i$
et donc 
\[ \bast\upalpha = \bast \{\upalpha_{i}\}\not\in \bast( \Upsigma^{n}A)=\Upsigma^{n}\bast  A\]
pour tout $n$.  
La preuve de la seconde affirmation est identique. 
  \end{proof}

Consid\'{e}rons maintenant $\bast\,\R$ comme un corps topologique muni de la 
topologie ordonn\'ee, ce qui en fait un ensemble totalement discon\`{e}xe. 

 \begin{prop} $(\langle  \bast  \mathfrak{a}\rangle, +)$ est un sous-groupe de Delaunay de $\bast\, \R$.
 \end{prop}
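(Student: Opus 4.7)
Le plan est de v\'{e}rifier successivement les trois conditions: que $\langle \bast \mathfrak{a}\rangle$ est un sous-groupe de $(\bast\R,+)$, qu'il est relativement dense dans $\bast\R$, et qu'il est uniform\'{e}ment discret, ce dernier point \'{e}tant celui o\`{u} le cadre non standard intervient de mani\`{e}re essentielle.

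Pour la structure de sous-groupe: comme $-1\in A$ et $A\cdot\mathfrak{a}\subset\mathfrak{a}$, l'id\'{e}al $\mathfrak{a}$ est sym\'{e}trique et contient $0$; par construction $\Upsigma^{m}\mathfrak{a}+\Upsigma^{n}\mathfrak{a}\subset\Upsigma^{m+n}\mathfrak{a}$. Ces propri\'{e}t\'{e}s du premier ordre se transf\`{e}rent aux ensembles internes $\Upsigma^n\bast\mathfrak{a}=\bast(\Upsigma^n\mathfrak{a})$, de sorte que l'union externe $\langle\bast\mathfrak{a}\rangle=\bigcup_{n\geq 1}\Upsigma^n\bast\mathfrak{a}$ contient $0$, est stable par somme et sym\'{e}trique: c'est un sous-groupe additif de $\bast\R$. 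Pour la densit\'{e} relative, $\mathfrak{a}$ est relativement dense dans $\R$ avec une certaine constante $R>0$, donc par transfert $\bast\mathfrak{a}$ l'est dans $\bast\R$ avec la m\^{e}me constante, et a fortiori $\langle\bast\mathfrak{a}\rangle\supset\bast\mathfrak{a}$.

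Le point central est la discr\'{e}tude uniforme. Chaque $\Upsigma^n\mathfrak{a}$ est un quasicristal, donc Delaunay avec une constante standard $r_n>0$, mais ces $r_n$ tendent n\'{e}cessairement vers z\'{e}ro puisque $\bigcup_n\Upsigma^n\mathfrak{a}=\mathcal{O}_K$ est dense dans $\R$ (voir (\ref{limitedessommes})); aucun r\'{e}el standard $r>0$ ne peut donc servir de constante uniforme pour l'union enti\`{e}re. Le plan est d'utiliser plut\^{o}t un infinit\'{e}simal positif $\upvarepsilon\in\bast\R_{>0}$ (par exemple $\upvarepsilon=\bast\{1/i\}$, qui existe gr\^{a}ce \`{a} la construction par ultrapuissance sur un ultrafiltre non principal). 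Pour deux \'{e}l\'{e}ments distincts $x,y\in\langle\bast\mathfrak{a}\rangle$ avec $x\in\Upsigma^m\bast\mathfrak{a}$ et $y\in\Upsigma^n\bast\mathfrak{a}$, on a $x,y\in\Upsigma^k\bast\mathfrak{a}$ pour $k=\max(m,n)$, en utilisant l'inclusion $\Upsigma^m\mathfrak{a}\subset\Upsigma^{m+1}\mathfrak{a}$ (cons\'{e}quence de $0\in\mathfrak{a}$); le transfert de la propri\'{e}t\'{e} Delaunay de $\Upsigma^k\mathfrak{a}$ donne alors $|x-y|\geq r_k$, et comme $r_k$ est un r\'{e}el standard strictement positif tandis que $\upvarepsilon$ est infinit\'{e}simal, on a $|x-y|>\upvarepsilon$. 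Ainsi $B_{\upvarepsilon}(x)\cap\langle\bast\mathfrak{a}\rangle=\{x\}$ pour tout $x$, d'o\`{u} la discr\'{e}tude uniforme.

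L'obstacle principal est donc interpr\'{e}tatif: il faut comprendre la notion de Delaunay dans $\bast\R$ en autorisant $r,R\in\bast\R_{>0}$ non standard, la premi\`{e}re \'{e}ventuellement infinit\'{e}simale et la seconde \'{e}ventuellement infinie. Loin d'\^{e}tre un d\'{e}faut, c'est pr\'{e}cis\'{e}ment ce qui distingue $\langle\bast\mathfrak{a}\rangle$ de son effondrement standard $\bigcup_n\Upsigma^n\mathfrak{a}=\mathcal{O}_K$, et donne son sens g\'{e}om\'{e}trique au passage \`{a} l'ultrapuissance, l'\'{e}chelle infinit\'{e}simale absorbant pr\'{e}cis\'{e}ment la limite d\'{e}g\'{e}n\'{e}r\'{e}e que l'on rencontrait en caract\'{e}ristique nulle standard.
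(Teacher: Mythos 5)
Votre preuve est correcte et suit essentiellement la m\^{e}me d\'{e}marche que celle de l'article : transfert, pour chaque $n$, de la constante standard $r_{n}$ de discr\'{e}tude uniforme de $\Upsigma^{n}\mathfrak{a}$ \`{a} $\Upsigma^{n}\bast\mathfrak{a}$, puis choix d'un infinit\'{e}simal $\bast\upvarepsilon>0$, inf\'{e}rieur \`{a} tous les $r_{n}$, comme constante de discr\'{e}tude pour l'union, la densit\'{e} relative provenant du transfert depuis $\mathfrak{a}$. Vous explicitez seulement davantage la structure de sous-groupe et le fait que deux \'{e}l\'{e}ments quelconques vivent dans un m\^{e}me $\Upsigma^{k}\bast\mathfrak{a}$, d\'{e}tails que l'article laisse implicites.
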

 
 \begin{proof}[D\'{e}monstration]  Il est clair que $ \langle\bast   \mathfrak{a}\rangle$ est 
 relativement dense dans $\bast\,\R$. Soit $r_{n}$ tel que $\Upsigma^{n} \mathfrak{a}$ est uniform\'{e}ment 
 discret par rapport \`{a} $r_{n}$.  Alors $\Upsigma^{n}\bast  \mathfrak{a}$ est aussi
 uniform\'{e}ment discret par rapport a $r_{n}$.  Si $\bast \upvarepsilon>0$ est infinitésimal,
 $\bast\upvarepsilon <r_{n}$ pour tout $n$, donc $\langle \bast   \mathfrak{a}\rangle$
 est $\bast\upvarepsilon$-uniform\'{e}ment discret.
 \end{proof}
 
 Pour tout $\mathfrak{a}$ id\'{e}al quasicristallin de $A$, le quotient de Hausdorff 
\[  \bast\, \T_{\mathfrak{a}} := \bast\,\R/ \langle \bast \mathfrak{a} \rangle \]
est un {\em tore quasicristallin non standard}.  On note que $ \bast\, \T_{\mathfrak{a}}$ n'est pas compact au 
sens usuel parce que la topologie ordonnée sur $\bast\,\R$ n'est pas 2-d\'{e}nombrable.

Si l'on note $\bast\, \R_{\upvarepsilon}\subset\,\bast\,\R$ le sous espace vectoriel des infinit\'{e}simaux, 
le quotient \[ \bbull\R := \bast\,\R/  \bast\, \R_{\upvarepsilon}\]
joue le r\^{o}le de rev\^{e}tement universel de certaines laminations de dimension 1;  en particulier
\[ \mathcal{F}(\uptheta ) = \{ \text{le feuilletage de Kronecker de pente }\uptheta \} = \bbull\R/\bast\, \Z (\uptheta ), \]
o\`{u} $\bast\, \Z (\uptheta )\subset\bast\,\Z$ est le {\em groupe des approximations diophantiennes} (voir \cite{Ge-C}).
Dans le cas présent, $ \langle \bast A\rangle$ étant $\bast\upvarepsilon$-uniform\'{e}ment discret 
pour tout $\bast\upvarepsilon\in \bast\, \R_{\upvarepsilon}$, on a
\[  \bast\, \R_{\upvarepsilon} \cap \langle \bast A\rangle= \{ 0\} \] ce qui implique qu'il existe
un plongement  
\[ \langle \bast A\rangle\hookrightarrow \bbull \R .\]
Mais l'image de $ \langle \bast A\rangle$ dans $\bbull\R$ est dense : en fait  
$\langle \bast A\rangle\cap \R=\mathcal{O}_{K}$ qui est dense en $\R$. Donc $ \bast\, \T_{\mathfrak{a}}$ est 
un objet v\'{e}ritablement non standard : on perd la propriet\'{e} de Hausdorff
en annulant les infinit\'esimaux.  Le r\'{e}sultat suivant est l'analogue du Th\'{e}or\`{e}me \ref{torequant1}.

\begin{theo} $\bbull\R/\langle\bast A\rangle$ est isomorphe au tore quantique $\R/\mathcal{O}_{K}$.
\end{theo}

\begin{proof}[D\'{e}monstration]  On note que $\langle\bast A\rangle\supset \langle A\rangle=\mathcal{O}_{K}$ et que 
l'intersection des ensembles $\langle\bast A\rangle, \R\subset \bbull\R$ 
est $\langle A\rangle \cap\R=\mathcal{O}_{K}$.  De plus chaque ``feuille'' $\bbull r+\bbull\R$ possède une intersection 
non nulle avec $\langle\bast A\rangle$, d'où
$\bbull\R/\langle\bast A\rangle=\R/\langle A\rangle =\R/\mathcal{O}_{K}$.
\end{proof}

 
 

\section{Fonction exponentielle}\label{exp}

Dans cette section finale on d\'{e}veloppe une trigonom\'{e}trie quasicristalline que l'on l'utilise pour d\'{e}finir 
entre autres une exponentielle, laquelle fournit un premier analogue des modules de Drinfeld en caract\'{e}ristique nulle.

Rappelons d'abord quelques formules de trigonométrie ordinaire dans la représen\-tation en produits infinis.
Dans ce qui suit on utilise les fonctions {\it sinus} et {\it cosinus} ``absolues", c'est-à-dire {\it non normalisées} 
par rapport à $\uppi$, définies par les formules suivantes :
\[{\tt s}(x) :=x\prod_{n=1}^{\infty} \left(1- \frac{x^{2}}{n^{2}}\right) ,\quad {\tt c}(x) = {\tt s}' (x):=\prod_{n=1}^{\infty} \left(1- \frac{x^{2}}{(n-1/2)^{2}}\right) .\]

On rappelle également la formule de Wallis :
\[  \frac{\uppi}{2} = \prod_{n=1}^{\infty} \frac{4n^{2}}{(2n-1)(2n+1)} =  \prod_{n=1}^{\infty} \frac{n^{2}}{(n-1/2)(n+1/2)} .\]
Notons que le $n$-ième terme du produit est le quotient du carré de la racine, $n$, de ${\tt s}(x)$, 
divisée par le produit $(n-1/2)(n+1/2)$ des racines de ${\tt c}(x)$ voisines de $n$.
Donc, si l'on écrit $\upalpha_{n}=n$, $\upbeta_{n}= n-1/2$, les ensembles $\Z_{+}=\{ \upalpha_{n}\}$, $\Z_{\frac{1}{2},+}=\{ \upbeta_{n}\}$
sont les racines positives de ${\tt s}(x)$, ${\tt c}(x)$ et on a\footnote{N.B. Il n'est pas possible de réintégrer
le facteur $1/\upbeta_{1}$ dans le produit, en écrivant plutôt
$\prod ( \upalpha_{n}/\upbeta_{n})^{2}$ : le résultat ne converge pas. } :
\begin{align}\label{WallisReimagined}\uppi = \frac{1}{\upbeta_{1}}  \prod_{n=1}^{\infty} \frac{\upalpha_{n}^{2}}{\upbeta_{n}\cdot \upbeta_{n+1}}.\end{align}

Supposons que l'on remplace les fonctions absolues en multipliant le réseau $\Z$ par une constante 
$\upxi$ : $\Z\rightarrow \upxi\Z$.   Notons ${\tt s}_{\upxi} $ la fonction sinus associée :
\[ {\tt s}_{\upxi}(x) =  x\prod \left(1- \frac{x^{2}}{\upxi^{2}n^{2}}\right).   \]
Puis
\[  {\tt s}_{\upxi} =\upxi \circ   {\tt s}\circ \upxi^{-1} ,\quad  {\tt c}_{\upxi}(x) :={\tt s}'_{\upxi}(x) = {\tt c}( \upxi^{-1}x)\]
et les ensembles de zéros de ${\tt s}_{\upxi}(x), {\tt c}_{\upxi}(x)$ sont $\upxi\Z, \upxi\Z_{\frac{1}{2}}$.  Si l'on définit  $\uppi_{\upxi}$ 
en utilisant (\ref{WallisReimagined}),
avec les ensembles de zéros  $\upxi\Z, \upxi\Z_{\frac{1}{2}}$ au lieu de $\Z, \Z_{\frac{1}{2}}$, nous voyons que
\[ \uppi_{\upxi}=\uppi/\upxi,\]
et les réseaux normalisés coïncident :
\[ \uppi \Z = \uppi_{\upxi}\upxi \Z.\]
 On en conclut que les fonctions sinus et cosinus classiques, s'écrivent dans cette notation
 \[ \sin (x) = {\tt s}_{\uppi}(x)  =\uppi {\tt s}(\uppi ^{-1}x),\quad \cos (x) = {\tt c}_{\uppi}(x)  = {\tt c}(\uppi^{-1} x),\]
 proviennent donc d'une construction canonique qui ne dépend que la classe projective 
 de $\Z$.  En effect, si en guise de notion ``absolue'' du sinus nous avions utilisé $\upxi \Z$ au lieu de $\Z$,
 nous aurions obtenu
 \[ \sin_{\upxi}(x) = ({\tt s}_{\upxi})_{\uppi_{\upxi}}(x) =  \uppi_{\upxi}{\tt s}_{\upxi}( \uppi^{-1}_{\upxi}x)=
 \uppi\upxi^{-1}{\tt s}_{\upxi}(\upxi  \uppi^{-1} x)
 ={\tt s}_{\uppi}(x) = \sin (x).\]

Soit $\mathfrak{a}\subset K\subset\R$ un id\'{e}al quasicristallin fractionnaire de rang 1. On commence par définir le sinus et cosinus ``absolus''
\[ {\tt s}_{\mathfrak{a}}(x) := x\prod_{0<\upalpha\in\mathfrak{a}} \left(1- \frac{x^{2}}{\upalpha^{2}}\right) ,\quad {\tt c}_{\mathfrak{a}}(x) :={\tt s}'_{\mathfrak{a}}(x).\]
On observe immédiatement que les deux fonctions sont lisses,  $ {\tt s}_{\mathfrak{a}}(x)$ est impaire et  ${\tt c}_{\mathfrak{a}}(x)$ est paire.
Soit
\[ {\tt e}_{\mathfrak{a}} (ix) := {\tt c}_{\mathfrak{a}}(x) + i{\tt s}_{\mathfrak{a}} (x)\]
l'exponentielle associée.
  On observe 
que l'image de ${\tt e}_{\mathfrak{a}}(ix)$ est contenue dans $\C^{\ast}\subset\C$ : c'est une cons\'{e}quence
imm\'{e}diate de la formule de produit qui d\'{e}finit ${\sf s}_{\mathfrak{a}}$ et 
\[ {\tt c}(0) = \lim_{x\rightarrow 0} \frac{{\tt s}(x)}{x} =1.\] 
La Figure 1 montre l'image par ${\tt e}_{\mathfrak{a}}(ix)$  d'un segment (compact)
de $\R\subset  \hat{\SI}_{\mathfrak{a}}^{\C}$, o\`{u} $\mathfrak{a}=A$, l'anneau quasicristallin 
associ\'{e} \`{a} $K=\Q(\upvarphi )$, avec $\upvarphi$ le nombre d'or.  On observe la formation d'une 
suite de cardio\"{\i}des autosimilaires, manifestation experimentale du th\'{e}or\`{e}me suivant :

\begin{figure}
\centering
\includegraphics[width=1\textwidth]{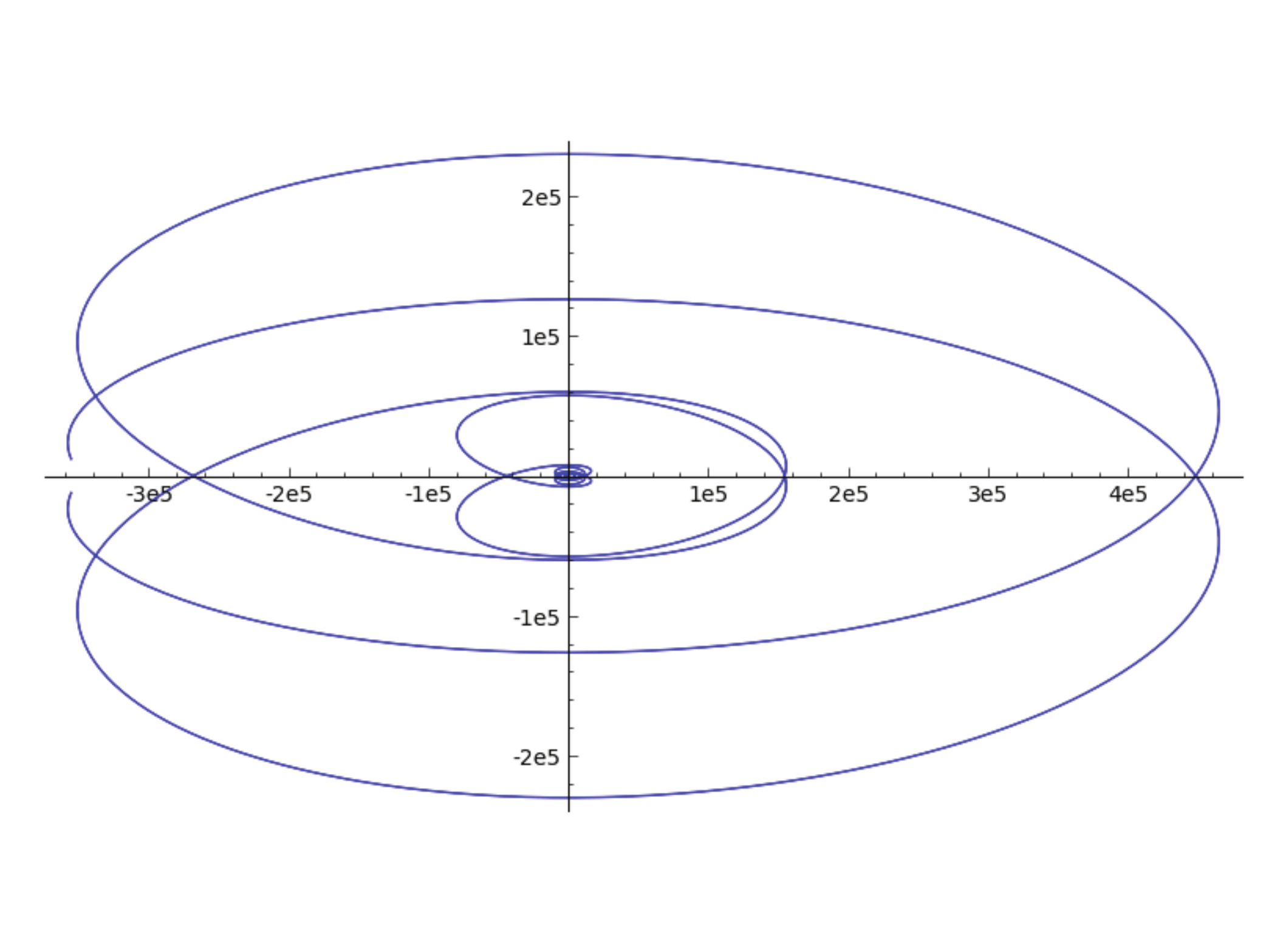}
\caption{Une feuille d'un soléno\"{\i}de quasicristallin associ\'{e} au nombre d'or}
\end{figure}

\begin{theo}[$\mathfrak{a}$-periodicit\'{e}]\label{aperiodicite}  Soient $\mathfrak{a}$ un id\'{e}al quasicristallin mod\`{e}le 
 de rang {\rm 1} et  $ \hat{\SI}_{\mathfrak{a}}^{\C}:=\hat{\SI}_{\mathfrak{a}} \times i\R$ le soléno\"{\i}de complexifi\'{e},
un solénoïde non compact dont toutes les feuilles sont isomorphes \`{a} $\C$. La fonction de variable complexe $z\mapsto  {\tt e}_{\mathfrak{a}}(iz)$ est continue par rapport \`{a} 
 la topologie transversale de $\hat{\SI}_{\mathfrak{a}}^{\C}$ et s'\'{e}tend donc en une fonction continue 
 \[    {\tt e}_{\mathfrak{a}}(i\,\cdot ):\hat{\SI}_{\mathfrak{a}}^{\C} \longrightarrow \C^{\ast}\]
holomorphe le long des feuilles.  
L'image de $\hat{\mathfrak{a}}\subset\hat{\SI}_{\mathfrak{a}}$ est
$  {\tt e}_{\mathfrak{a}}(i\,\hat{\SI}_{\mathfrak{a}})\cap \R$.
  Si $\mathfrak{a}$ est générique la restriction de $ {\tt e}_{\mathfrak{a}}$ \`{a} $\hat{\SI}_{\mathfrak{a}}$  
  n'est pas injective.
 \end{theo}
 
\begin{proof}[D\'{e}monstration] On commence par démontrer que ${\tt s}(z)$ a une extension à $\hat{\SI}_{\mathfrak{a}}^{\C}$; il suffit de démontrer l'existence d'une extension à $\hat{\SI}_{\mathfrak{a}}$.  On rappelle que $\upgamma_{i}\rightarrow 0$ en $\hat{\mathfrak{a}}$ si et seulement si pour tout $R>0$, il existe $M\in\N$ tel que
$\mathfrak{a}-\upgamma_{i}$ et $\mathfrak{a}$ coincide sur l'intervalle $[-R,R]$ pour tout $i\geq M$.  
Par conséquent,  par sa définition comme produit infini, la fonction
${\tt s}_{\mathfrak{a}}(x)$, dans chaque intervalle $[-R,R]$, est uniformément proche de
${\tt s}_{\mathfrak{a}}(x-\upgamma_{i})$, $i\geq M$.  Autrement dit, 
${\tt s}_{\mathfrak{a}}(x)$ est transversalment continue en $0$, uniformément sur les compacts.  
De la même manière ${\tt s}_{\mathfrak{a}}(x)$ possède la
même propriété de convergence en chaque point du complété $\hat{\mathfrak{a}}\subset\hat{\SI}_{\mathfrak{a}}$, 
d'où l'on déduit l'existence d'une extension à ce dernier.  La convergence, transversalement et uniformément
sur les compacts, de la fonction holomorphe ${\tt s}_{\mathfrak{a}}(z)$ implique la même propriété 
pour ses dérivées.  En particulier,  ${\tt c}_{\mathfrak{a}}(z)$ possède
également une extension à $\hat{\SI}_{\mathfrak{a}}^{\C}$.  
 Il est clair que si $ {\tt e}_{\mathfrak{a}}(i\, \hat{z})=1$, ${\tt s}_{\mathfrak{a}}(\hat{z})=0$, puis $\hat{x}\in\hat{\mathfrak{a}}$.  
Si $\mathfrak{a}$ est r\'{e}p\'{e}titif, $\hat{\SI}_{\mathfrak{a}}$ est un soléno\"{\i}de compact minimal.  En particulier, 
l'image de la feuille canonique $L_{\mathfrak{a}}\approx \R$ est dense dans l'image de $\hat{\SI}_{\mathfrak{a}}$, 
ce qui implique qu'elle possède des auto-intersections.
 \end{proof}

\begin{prop}\label{a'qc} L'ensemble des zéros 
\[ \mathfrak{b} :=\{ \upbeta\in\R|\; {\tt c}_{\mathfrak{a}}(\upbeta)=0\}\] est de Delaunay et on a la représentation en produit :
\begin{align}\label{cosprod}{\tt c}_{\mathfrak{a}}(x)=\prod_{0<\upbeta\in\mathfrak{b}} \left(1- \frac{x^{2}}{\upbeta^{2}}\right).\end{align}
\end{prop}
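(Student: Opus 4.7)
Le plan proc\'{e}de en trois \'{e}tapes : (i) entrelacement strict des z\'{e}ros de ${\tt c}_{\mathfrak{a}}$ avec ceux de ${\tt s}_{\mathfrak{a}}$ (exactement un par intervalle), (ii) discr\'{e}tion uniforme via la compacit\'{e} du sol\'{e}no\"{\i}de du Th\'{e}or\`{e}me \ref{aperiodicite}, (iii) formule du produit via la factorisation de Hadamard.

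Pour (i), je d\'{e}rive logarithmiquement le produit qui d\'{e}finit ${\tt s}_{\mathfrak{a}}$ et obtiens
\[ \frac{{\tt c}_{\mathfrak{a}}(x)}{{\tt s}_{\mathfrak{a}}(x)} = \frac{1}{x}+\sum_{0<\upalpha\in\mathfrak{a}}\left(\frac{1}{x-\upalpha}+\frac{1}{x+\upalpha}\right), \]
avec convergence garantie par $\sum\upalpha^{-2}<\infty$ (propri\'{e}t\'{e} de Delaunay de $\mathfrak{a}$ dans $\R$). Sa d\'{e}riv\'{e}e $-\sum(x-\upalpha)^{-2}$ \'{e}tant strictement n\'{e}gative, cette fonction d\'{e}cro\^{\i}t strictement de $+\infty$ \`{a} $-\infty$ sur chaque intervalle ouvert $(\upalpha_{n},\upalpha_{n+1})$ entre deux \'{e}l\'{e}ments positifs cons\'{e}cutifs de $\mathfrak{a}$ (avec $\upalpha_{0}=0$), et s'y annule en un point unique $\upbeta_{n}$. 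Par parit\'{e}, $\mathfrak{b}=\{\pm\upbeta_{n}\}_{n\geq 0}$, et la densit\'{e} relative de $\mathfrak{b}$ s'h\'{e}rite imm\'{e}diatement de celle de $\mathfrak{a}$.

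L'\'{e}tape (ii) est le point d\'{e}licat. Par le Th\'{e}or\`{e}me \ref{aperiodicite}, ${\tt c}_{\mathfrak{a}}$ se prolonge continu\^{u}ment au sol\'{e}no\"{\i}de compact $\hat{\SI}_{\mathfrak{a}}^{\C}$, et le lieu des z\'{e}ros de ${\tt s}_{\mathfrak{a}}$ sur $\hat{\SI}_{\mathfrak{a}}$ est exactement $\hat{\mathfrak{a}}$ (car ${\tt e}_{\mathfrak{a}}(i\cdot)$ est r\'{e}elle pr\'{e}cis\'{e}ment sur $\hat{\mathfrak{a}}$). Il reste \`{a} montrer que ${\tt c}_{\mathfrak{a}}$ ne s'annule nulle part sur $\hat{\mathfrak{a}}$ : sur chaque feuille, ${\tt s}_{\mathfrak{a}}$ n'est pas identiquement nulle (par densit\'{e} de la feuille canonique et continuit\'{e}), et la simplicit\'{e} de ses z\'{e}ros, valable sur la feuille canonique, se propage aux autres feuilles par un argument de type Hurwitz appliqu\'{e} \`{a} la convergence uniforme du produit d\'{e}finissant ${\tt s}_{\mathfrak{a}}$ le long des feuilles. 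Par compacit\'{e} de $\hat{\mathfrak{a}}$, $|{\tt c}_{\mathfrak{a}}|$ y admet alors une borne inf\'{e}rieure uniforme $c>0$, et la continuit\'{e} uniforme sur $\hat{\SI}_{\mathfrak{a}}$ fournit $r>0$ tel que $|{\tt c}_{\mathfrak{a}}|>c/2$ sur le $r$-voisinage de $\hat{\mathfrak{a}}$. En restriction \`{a} la feuille canonique, $\mathfrak{b}$ reste \`{a} distance au moins $r$ de $\mathfrak{a}$ ; combin\'{e} \`{a} la propri\'{e}t\'{e} d'un z\'{e}ro par intervalle, deux z\'{e}ros cons\'{e}cutifs $\upbeta_{n},\upbeta_{n+1}$ (situ\'{e}s de part et d'autre de $\upalpha_{n+1}$) sont s\'{e}par\'{e}s d'au moins $2r$.

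Pour (iii), la propri\'{e}t\'{e} de Delaunay de $\mathfrak{b}$ donne $\sum\upbeta^{-2}<\infty$, donc
\[ \tilde{\tt c}(z) := \prod_{0<\upbeta\in\mathfrak{b}}\left(1-\frac{z^{2}}{\upbeta^{2}}\right) \]
d\'{e}finit une fonction enti\`{e}re paire d'ordre au plus $1$ dont l'ensemble des z\'{e}ros est exactement $\mathfrak{b}$. Comme ${\tt c}_{\mathfrak{a}}={\tt s}'_{\mathfrak{a}}$ h\'{e}rite de l'ordre au plus $1$ de ${\tt s}_{\mathfrak{a}}$ (via les in\'{e}galit\'{e}s de Cauchy), le quotient ${\tt c}_{\mathfrak{a}}/\tilde{\tt c}$ est entier, sans z\'{e}ro, d'ordre au plus $1$, donc \'{e}gal \`{a} $e^{az+b}$ par Hadamard. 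La parit\'{e} commune force $a=0$ et l'\'{e}valuation en $z=0$ donne $e^{b}={\tt c}_{\mathfrak{a}}(0)/\tilde{\tt c}(0)=1$, d'o\`{u} (\ref{cosprod}). L'obstacle principal sera donc le transfert de la simplicit\'{e} des z\'{e}ros de la feuille canonique aux autres feuilles dans (ii), ce qui requiert un contr\^{o}le soigneux de la convergence uniforme feuilletage par feuilletage avant d'invoquer Hurwitz.
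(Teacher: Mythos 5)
Your argument reaches the right conclusion and, in places, by a genuinely different route than the paper. Your step (i) is the paper's first step in a cleaner guise: the paper differentiates the product defining ${\tt s}_{\mathfrak{a}}$ explicitly and counts sign changes of ${\tt c}_{\mathfrak{a}}$ between consecutive zeros of ${\tt s}_{\mathfrak{a}}$, whereas your strictly decreasing logarithmic derivative gives the same interlacing and, as a useful by-product, the simplicity of the real zeros. For uniform discreteness the paper argues in the opposite direction: ${\tt c}_{\mathfrak{a}}$ extends continuously to the compact solenoid (Theorem \ref{aperiodicite}), hence is bounded on $\R$, and clustering of $\mathfrak{b}$ would force $|{\tt c}_{\mathfrak{a}}|$ to be arbitrarily large between close consecutive elements of $\mathfrak{b}$; you instead extract a lower bound for $|{\tt c}_{\mathfrak{a}}|$ on a leafwise $r$-neighbourhood of $\hat{\mathfrak{a}}$ and deduce the separation $2r$. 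Your route is heavier, but it makes explicit the nondegeneracy (simple zeros of the leafwise limits of ${\tt s}_{\mathfrak{a}}$, via Hurwitz and the uniform discreteness of the translates of $\mathfrak{a}$) that the paper's one-line blow-up claim uses silently, so it buys rigor exactly where the paper is terse; note only that your justification that the limit function is not identically zero on a leaf should invoke the density of \emph{that} leaf (minimality of $\hat{\SI}_{\mathfrak{a}}$ for a generic model set, stated earlier in the paper), not the density of the canonical leaf, since a continuous extension could a priori vanish on a non-dense leaf without vanishing everywhere. Finally, for (\ref{cosprod}) the paper simply asserts that the product has the same divisor and the same value $1$ at $0$ as ${\tt c}_{\mathfrak{a}}$ and concludes equality; your Hadamard argument (order at most one, parity killing the linear exponential factor, normalization at $0$) is the correct way to close that step, but the claim that the quotient is entire and zero-free tacitly uses that ${\tt c}_{\mathfrak{a}}$ has no non-real zeros and that its real zeros are simple: simplicity follows from your monotonicity in (i), and reality of the zeros of ${\tt c}_{\mathfrak{a}}={\tt s}'_{\mathfrak{a}}$ follows because ${\tt s}_{\mathfrak{a}}$ belongs to the Laguerre--P\'{o}lya class, which is stable under differentiation --- a point the paper also leaves implicit. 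With these two touch-ups your proof is complete.
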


\begin{proof}[Démonstration]   En effet, écrivons ${\tt c}_{\mathfrak{a}} (x)={\tt s}_{\mathfrak{a}} '(x)$
\[{\tt c}_{\mathfrak{a}} (x)= \prod_{0<\upalpha\in\mathfrak{a}} \left(1- \frac{x^{2}}{\upalpha^{2}}\right) - \sum_{0<\upalpha\in\mathfrak{a}}  \left( \frac{2x^{2}}{\upalpha^{2}}\right)
\prod_{0<\upgamma\not=\upalpha, \;\upgamma\in\mathfrak{a}} \left(1- \frac{x^{2}}{\upgamma^{2}}\right).
\]
En utilisant la dernière, on constate qu'entre chaque paire de zéros consécutifs de $\mathfrak{a}$, ${\tt c}_{\mathfrak{a}}$ change de signe précisément un fois.  Autrement dit il y a un et un seul élément de $\mathfrak{b}$ entre chaque paire de zéros consécutifs 
de $\mathfrak{a}$ et on peut énumérer les éléments de $\mathfrak{a},\mathfrak{b}$ :
\[  \cdots  \upalpha_{-2}<\upbeta_{-2}<\upalpha_{-1}<\upbeta_{-1}< 0<\upbeta_{1} <\upalpha_{1} < \upbeta_{2} <\upalpha_{2} <\cdots , \]
où $\upalpha_{-n}:=-\upalpha_{n},\; \upbeta_{-n}:=-\upbeta_{n}$.
Donc, $\mathfrak{b}$ est relativement dense.  Par la démonstration du Théorème \ref{aperiodicite}, ${\tt c}_{\mathfrak{a}}={\tt s}'_{\mathfrak{a}}$ s'étend de façon continue
au solenoïde compact $\hat{\SI}_{\mathfrak{a}}$, alors ${\tt c}_{\mathfrak{a}}$ est bornée.   
Si $\mathfrak{b}$ n'est pas uniformément discret, il existe une suite $\upbeta_{n}< \upalpha_{n}<\upbeta_{n+1}$
d'éléments consécutifs tels que $\upbeta_{n+1}-\upbeta_{n}\rightarrow 0$.  Mais le dernier implique que ${\tt c}_{\mathfrak{a}}$ a des valeurs arbitrairement grands entre tels extremes consecutifs, c'est-à-dire, il existe une suite $\upbeta_{n}<x_{n}<\upbeta_{n+1} $ tels que $|{\tt c}_{\mathfrak{a}}(x_{n})|\rightarrow \infty$,   
c'est-à-dire, ${\tt c}_{\mathfrak{a}}$ n'est pas borné.  
Donc, on conclut que $\mathfrak{b}$ est en ensemble de Delaunay.  Le produit dans la ligne (\ref{cosprod}) converge uniformément sur les compacts (comme fonction de $x\in\C$), alors il définit une fonction
en $\C$ avec la même diviseur que ${\tt c}_{\mathfrak{a}}$ et la même valeur, 1,  en $x=0$.  Donc, les deux fonctions sont égales.
\end{proof}


Par contre, l'exponentielle usuelle ne s'étend pas à $\hat{\SI}_{\mathfrak{a}}$.

Comme dans la preuve de la Proposition \ref{a'qc}, notons les éléments positifs de $\mathfrak{a}$, $\mathfrak{b}$ sous 
la forme $0<\upalpha_{1}<\upalpha_{2}<\cdots $, $0<\upbeta_{1}<\upbeta_{2}<\cdots $.  
En suivant à nouveau (\ref{WallisReimagined})
on définit
 \[ \uppi_{\mathfrak{a}} := \frac{1}{\upbeta_{1}}  \prod_{n=1}^{\infty} \frac{\upalpha_{n}^{2}}{\upbeta_{n}\cdot \upbeta_{n+1}}.   \]
Les fonctions trigonometriques {\it normalisées} de la variable complexe $z$ sont alors définies par 
 \begin{align}\label{defsinus} \sin_{\mathfrak{a}}(\uppi_{\mathfrak{a}}z) := ({\tt s}_{\mathfrak{a}})_{\uppi_{\mathfrak{a}}}( \uppi_{\mathfrak{a}}z)=\uppi_{\mathfrak{a}}z \prod_{0<\upalpha\in\mathfrak{a}} \left(1-\frac{z^{2}}{\upalpha^{2}}\right) 
 \end{align}
 et
 \[   \cos_{\mathfrak{a}}(\uppi_{\mathfrak{a}}z) := ({\tt c}_{\mathfrak{a}})_{\uppi_{\mathfrak{a}}}( \uppi_{\mathfrak{a}}z) =\prod_{0<\upbeta\in\mathfrak{b}} \left(1-\frac{z^{2}}{\upbeta^{2}}\right) 
 .\]
Enfin l'{\em exponentielle quasicristalline normalisée} associ\'{e}e \`{a} $\mathfrak{a}$ est donnée par
 \[ 
 \exp_{\mathfrak{a}}(\uppi_{\mathfrak{a}}iz) :=\text{\tt e}_{\uppi_{\mathfrak{a}}}(i \uppi_{\mathfrak{a}}z):=  \cos_{\mathfrak{a}}(\uppi_{\mathfrak{a}}z) +i \sin_{\mathfrak{a}}(\uppi_{\mathfrak{a}}z)  .
  \]

 Notons 
 \[ \hat{\sf C}_{\mathfrak{a}} :=    \exp_{\mathfrak{a}}(\uppi_{\mathfrak{a}}i\hat{\mathfrak{a}})\subset  \hat{\E}_{\mathfrak{a}} :=   \exp_{\mathfrak{a}}(\uppi_{\mathfrak{a}}i\hat{\SI}_{\mathfrak{a}})\subset\C^{\ast} .\]
 Ci-dessous nous utiliserons les notations suivantes :
\[ s_{\mathfrak{a}}(x) 
:=\sin_{\mathfrak{a}} (\uppi_{\mathfrak{a}}x),\quad c_{\mathfrak{a}}(x) :=\cos_{\mathfrak{a}} (\uppi_{\mathfrak{a}}x),\quad e_{\mathfrak{a}}(x) :=\exp_{\mathfrak{a}} (\uppi_{\mathfrak{a}}ix). \]
 Observons que, puisque $ s_{\mathfrak{a}}(\hat{\mathfrak{a}})\equiv 0$, 
 \[  1\in \hat{\sf C}_{\mathfrak{a}}=  c_{\mathfrak{a}}(\hat{\mathfrak{a}}) =  \hat{\E}_{\mathfrak{a}} \cap \R  \]
 est l'image continue d'un ensemble de Cantor, donc est elle-même un ensemble de Cantor, 
 ou bien finie.  On regarde $\hat{\sf C}_{\mathfrak{a}}$ comme un ``élargissement'' de $1$, de la m\^{e}me
 fa\c{c}on que l'on a regard\'{e} $\hat{\mathfrak{a}}\subset \hat{\SI}_{\mathfrak{a}}$ comme un élargissement de $0$.
La paire \[ ( \hat{\E}_{\mathfrak{a}}, \hat{\sf C}_{\mathfrak{a}})\] est l'objet que l'on voudrait considérer comme 
un analogue d'un module de Drinfeld de rang 1 en caract\'{e}ristique z\'{e}ro.

 Comme on le verra, ceci se complique du fait du défaut d'injectivit\'{e} de $e_{\mathfrak{a}}$.  
 Ce serait trop esp\'{e}rer que de vouloir faire de $e_{\mathfrak{a}}$ un homomorphisme dans le sens 
 usuel entre la structure sommatoire dans la suite des soleno\"{\i}des $\{ \hat{\SI}_{\Upsigma^{n}\mathfrak{a}}\} $ 
 et la structure de groupe de $\C^{\ast}$.  Plus pr\'{e}cis\'{e}ment, le diagramme
 \[ \begin{diagram}
 \hat{\SI}_{\mathfrak{a}} \times  \hat{\SI}_{\mathfrak{a}} & \rTo^{+} & \hat{\SI}_{\Upsigma\mathfrak{a}} \\
 \dTo^{ e_{\mathfrak{a}}\times e_{\mathfrak{a}} } & \text{\footnotesize{non}} \circlearrowleft  &\dTo_{e_{\Upsigma\mathfrak{a}}} \\
 \C^{\ast} \times  \C^{\ast} &\rTo_{\times} & \C^{\ast}
 \end{diagram}\]
 {\em ne commute pas}.  
Dans ce contexte il convient alors de considerer une notion de morphisme plus faible.   
 
 Soient
$G=(G,+ ) $ et $ H=(H, + )$ des structures munies de produits commutatifs partiellement définis et $|\cdot |:H\longrightarrow\R$ une valuation.   On rappelle qu'une fonction  
 \[ f:  G\longrightarrow H\]
 est un {\it quasi morphisme} si la somme $f(g) + f(h)$ est définie quand $g+ h$ l'est et il existe un constant $D$,
 ne dépendant que de $G$ et $H$ telle que
 \[ \left| f(g + h) -f(g) -f(h) \right| \leq D .\]

Dans le cas des solenoïdes quasicristallins il est commode d'introduire la variante suivante de la notion de quasi morphisme~:
on suppose qu'il existe
des structures auxiliares $\Upsigma G$, $\Upsigma H$ où les sommes prennent  formellement leurs valeurs, et que $\Upsigma$ est maintenant muni d'une valuation $|\cdot |$. Un {\it quasi morphisme adapté} consiste alors en
un couple de fonctions
\[ f: G\longrightarrow H,\quad \Upsigma f: \Upsigma G\longrightarrow \Upsigma H\]
telles que
 \[ \left| \Upsigma f(g + h) -f(g) -f(h) \right| \leq D .\]

 \begin{theo} L'exponentielle quasicristalline definit un quasi morphisme adapté
 \[ e_{\mathfrak{a}} : (\hat{\SI}_{\mathfrak{a}} , +)\longrightarrow (\C^{\times}, \times )\]
au sens qu'il existe une constante $D>0$ telle que
 \[  \left| \frac{  e_{\Upsigma\mathfrak{a}} (x+y )}{e_{\mathfrak{a}}(x)e_{\mathfrak{a}}(y) } \right| <D.\]
 \end{theo}
 
 \begin{proof}[Démonstration] L'affirmation suit immediatement du fait que les exponentielles $ e_{\Upsigma\mathfrak{a}}, e_{\mathfrak{a}}$, fonctions continues sur les solenoïdes compacte
 $\hat{\SI}_{\mathfrak{a}}, \hat{\SI}_{\Upsigma\mathfrak{a}}$, ont des images compactes qui omettent zéro. 
 \end{proof}
 
 \begin{note} Vues comme des fonctions sur les solenoïdes complexes $\hat{\SI}^{\C}_{\mathfrak{a}}$ et $ \hat{\SI}^{\C}_{\Upsigma\mathfrak{a}}$, les exponentielles $ e_{\Upsigma\mathfrak{a}}, e_{\mathfrak{a}}$ 
 ne définissent {\it pas} un quasi morphisme adapté.  En effet, sinon 
 on aurait pour tous $z,w\in\C$
\[ \left| \frac{ e_{\Upsigma\mathfrak{a}}(z+w)}{ e_{\mathfrak{a}}(z)  e_{\mathfrak{a}}(w)}\right| \leq D \]
pour un certain $D>0$. 
Mais d'après le théorème de Liouville, ce quotient holomorph serait constant, soit 
\[  \frac{ e_{\Upsigma\mathfrak{a}}(z+w)}{ e_{\mathfrak{a}}(z)  e_{\mathfrak{a}}(w)} \equiv C\]
avec en fait  $C=1$ puisque $1$ est un valeur atteinte.  Les exponentielles respecteraient donc le produit
\begin{align}\label{condhom} e_{\Upsigma\mathfrak{a}}(z+w) = e_{\mathfrak{a}}(z) e_{\mathfrak{a}}(w),\end{align}
et l'inversion.  
Restreignant (\ref{condhom}) à $\R$, on obtiendrait une application holomorphe non constante $e:\R\rightarrow \C^{\ast}$, dont l'image serait contenue dans le cercle unité $\SI^{1}$.  En particulier,
les images des solenoïdes $\hat{\SI}_{\mathfrak{a}}$, $\hat{\SI}_{\Upsigma\mathfrak{a}}$ par $e_{\mathfrak{a}}$, $e_{\Upsigma \mathfrak{a}}$ seraient égales à $\SI^{1}$.

Or pour une unité de Pisot  $\uprho \in\R$, $|e_{\mathfrak{a}}(\uprho )|\not=1$.  Si par exemple on choisit  $\uprho = \upvarphi$ (le nombre d'or) et $\mathfrak{a} = \mathfrak{a}_{0}$, $\upvarphi \in \mathfrak{a}$ est l'élément positif le plus petit et nous avons
\[  e_{\mathfrak{a}}(i\upvarphi ) = c _{\mathfrak{a}}(\upvarphi ) =s' _{\mathfrak{a}}(\upvarphi ) =  -2\prod_{\upvarphi<\upalpha\in\mathfrak{a}} \left( 1-\frac{\upvarphi^{2}}{\upalpha^{2}}   \right) .   \]
\'Ecrivons les éléments positifs de $\mathfrak{a}$ comme
\[ 0 < \upalpha_{1}=\upvarphi  < \upalpha_{2}< \upalpha_{3}<\cdots  .\]
On sait que, comme  quasicrystal, $\mathfrak{a}$ est formé de trois intervalles, de longueurs respectives$\upvarphi$,  1 et $\upvarphi^{-1}$~;
donc, pour tout $n$, soit  $\upalpha_{n+1}= \upalpha_{n}+\upvarphi$, soit $\upalpha_{n+1}= \upalpha_{n}+1$, soit   $\upalpha_{n+1}=\upalpha_{n}+\upvarphi^{-1}$.  Voici quelques exemples  
\begin{itemize}
\item[] $\upalpha_{2}=\upvarphi +1 = \upvarphi^{2}$, puisque
$\upvarphi +\upvarphi^{-1}\not\in\mathfrak{a}$.  
\item[]  $\upalpha_{3} = \upalpha_{2} + \upvarphi = \upvarphi^{3} $ car $ \upvarphi^{2}+1,\upvarphi^{2}+\upvarphi^{-1} \not\in\mathfrak{a}$.
\item[]  $\upalpha_{4}= \upalpha_{3}+1= \upvarphi^{3} +1$.
\item[] $\upalpha_{5} = \upalpha_{4} + \upvarphi^{-1} =  \upvarphi^{3} +1 + \upvarphi^{-1}= \upvarphi^{3} + \upvarphi$.
\end{itemize}
   Observons que $\upalpha_{2}/\upvarphi=\upvarphi <2$~; on voit par induction que $\upalpha_{n}/\upvarphi < n$ pour tout $n$.  En effet, si l'on suppose ceci vaut pour $n$, on a 
\[  \frac{\upalpha_{n+1}}{\upvarphi} = \frac{\upalpha_{n}}{\upvarphi}+ \left\{   \begin{array}{ll}
1   & \text{si  $\upalpha_{n+1} =\upalpha_{n}+\upvarphi$ }\\
\upvarphi^{-1}   & \text{si  $\upalpha_{n+1} =\upalpha_{n}+1$ }\\
\upvarphi^{-2} & \text{sinon}
\end{array} \right.  < n+1. \]
Ainsi, pour tout  $n\geq 2$, 
\[   1- \frac{\upvarphi^{2}}{\upalpha_{n}^{2}}<  1- \frac{1}{n^{2}} \]
et donc
\[ | e_{\mathfrak{a}}(\upvarphi ) |<  2 \prod_{n\geq 2}  \left( 1- \frac{1}{n^{2}}  \right) = 1. \]
Ainsi $e_{\mathfrak{a}}$ ne peut être un quasi morphisme de $\C$.

 \end{note}
 
 \section{Vers un concept de module de Drinfeld en caractéristique nulle}
 
 Dans cette section, nous présentons une proposition une première approche vers une notion de module de Drinfeld en caractéristique zéro. Malgré ton caractère partiel nous espérons qu'elle fournit une piste intéressante et peut-être féconde,  en direction d'une théorie de Drinfeld-Hayes en charactéristique zéro.
 
 Il s'agit essentiellement d'un essai pour remédier au manque d'injectivité de l'exponentielle quasicristalline, lequel
 représente un obstacle à l'utilisation de cette exponentielle pour la définition d'un équivalent de module de Drinfeld en ca\-ractéristique nulle.
 
On observe dans se contexte que~:
\begin{enumerate} 
\item[1.] dans le cas de l'exponentielle classique, celle-ci est une combination lineaire complexe du sinus et de sa dérivée, définissant une fonction injective sur $\SI^{1}$. Il n'est donc pas nécessaire d'utiliser davantage de dérivées en la définition de l'exponentielle~; au demeurant la suite des dérivées du sinus est périodique de période 2. 
\item[2.] dans le cas d'une courbe elliptique $\T = \C/\Uplambda$, on utilise la fonction $\wp $ de Weierstrass et sa dérivée~; comme l'application de Weierstrass
\[\T \ni z\longmapsto [\wp (z), \wp'(z),1] \in\PR^{2} \]est déjà injective, il n'est pas nécessaire  
de considérer plus de dérivées.
\item[3.] pour un module de Drinfeld, l'exponentielle est essentiellement le sinus~; sa dérivée est identiquement égale à 1 et il ne servirait à rien de l'inclure. 
 \end{enumerate}
 Par contre, on ne dispose d'aucune information montrant que la suite des dérivées des fonctions trigonometriques quasicristallines 
 \begin{align}\label{derivees} s_{\mathfrak{a}}(x),\;\; c_{\mathfrak{a}}(x) =s'_{\mathfrak{a}}(x),\;\; s_{\mathfrak{a}}^{(1)}(x):= c'_{\mathfrak{a}}(x), \;\; c_{\mathfrak{a}}^{(1)}(x):= (s_{\mathfrak{a}}^{(1)})'(x), \;\;   \dots \end{align} est périodique~; il semble même improbable qu'elle le soit. 
 
 Cela dit le corps $\C$ n'est pas suffisamment grand pour accomoder une image injective d'un solenoïde de dimension~1.  Dans l'approche du présent paragraphe, on va chercher une ultrapuissance de $\C$ qui admette une inclusion du solenoïde,  monomorphique si le solenoïde a une structure additive. Notons d'abord que tout ceci est vrai dans le cas du solenoïde classique:
 
\begin{exam} Soit \[\hat{\SI}^{1}=\lim_{\longleftarrow} \SI^{1}\] le solenoïde classique,
limite inverse des surjections $\SI^{1}\rightarrow \SI^{1}$, $x\mapsto nx$, $n\in\N$, et soit \[ \bast \,\C=\C^{\N}/\mathfrak{u}\] 
une ultrapuissance de $\C$ par rapport à  un ultrafiltre non principal $\mathfrak{u}\subset 2^{\N}$. On définit l'application
 \[  \hat{e}: \hat{\SI}^{1} \longrightarrow \bast \,\C,\quad\hat{x}= (x_{n}) \longmapsto \bast\{  e(x_{n})  \} ,\quad e(x):= \exp (2\uppi i x). \]
On note que $\hat{e}$ est bien définie et que c'est un homomorphisme.  Si l'on a $\hat{e}(\hat{x}) = \hat{e}(\hat{y})$, alors il existe $X\in\mathfrak{u}$, nécessairement infini, tel que $e (x_{n}) = e(y_{n})$
pour tout $n\in X$.  Mais ceci implique que les deux suites cohérentes coïncident à partir d'un certain rang.  On en conclut
que $\hat{e}$ est bien injective.
\end{exam}

Notons \[ e_{\mathfrak{a}}^{(n)} := i s_{\mathfrak{a}}^{(n)}  + c_{\mathfrak{a}}^{(n)}\] et 
pour $X\subset\N = \{ 0,1,2,\dots\}$ définissons
\[    e_{\mathfrak{a}}^{(X)}: \hat{\SI}_{\mathfrak{a}}\longrightarrow \C^{\infty},  \quad        e_{\mathfrak{a}}^{(X)} (\hat{x}):=
 \left( e_{\mathfrak{a}}^{(m)}(\hat{x}) \right)_{m\in X} .  \]
 Pour chaque ultrafiltre $\mathfrak{u}\subset {\sf 2}^{\N}$, l'ensemble  $\{ e_{\mathfrak{a}}^{(X)}\}$ induit une application
 \[   e_{\mathfrak{a}}^{(\mathfrak{u})}:  \hat{\SI}_{\mathfrak{a}}\longrightarrow  \bast \C_{\mathfrak{u}}.\]
 \begin{conj}  Il existe un ultrafiltre $\mathfrak{u}\subset {\sf 2}^{\N}$ tel que $ e_{\mathfrak{a}}^{(\mathfrak{u})}$ est injective.
\end{conj}

Soit 
\[ e_{\mathfrak{a}}^{(\infty)}:= e^{(\N)}_{\mathfrak{a}}.\]  
Le résultat suivant est peut-être un premier pas vers la justification de la conjecture ci-dessus~: 


 

    \begin{prop} $e^{(\infty)}_{\mathfrak{a}}$ est injective.
  \end{prop}
  
  \begin{proof}[Démonstration]  On rappelle que les points du solénoïde $\hat{\SI}_{\mathfrak{a}}$ paramétrisent les quasicristaux dans la clôture des translatés de $\mathfrak{a}$, dans
   l'espace topologique de tous les quasicristaux de dimension 1:
  \[ \hat{\SI}_{\mathfrak{a}} = \overline{\{  \mathfrak{a}-r\}_{r\in\R} }   .\]  
  Notons $\mathfrak{a}_{r}:= \mathfrak{a}-r $~;
   on a l'injection $\R\hookrightarrow \hat{\SI}_{\mathfrak{a}},\quad r\longmapsto \mathfrak{a}_{r} $.
   Si 
  $\hat{r}=\lim_{i} r_{i}$ est un point d'accumulation, notons $\mathfrak{a}_{\hat{r}} = \lim \mathfrak{a}_{r_{i}}$ le quasicristal associé.
  Il suffit alors de montrer l'injectivité de l'application
  \[ s^{(\infty)}_{\mathfrak{a}}:\hat{\SI}_{\mathfrak{a}}\longrightarrow \R^{\infty},\quad s_{\mathfrak{a}}^{(\infty)}(\hat{r}) = \left( s_{\mathfrak{a}}^{(n)}(\hat{r})\right)_{n\geq 0}.\]
Soit $r\in\R\subset\hat{\SI}$.  Puisque
  \[   s_{\mathfrak{a}}|_{\R}(x) = \sum \frac{s_{\mathfrak{a}}^{(n)}(r)}{n!} (x-r)^{n} ,\]
  la fonction de la variable réelle
  \[  s_{r} (x): = s_{\mathfrak{a}}|_{\R}(x+r)=\sum \frac{s_{\mathfrak{a}}^{(n)} (r)}{n!} x^{n} \]
 vérifie que l'ensemble ${\sf Z}(s_{r})$ de ses zéros est justement $\mathfrak{a}_{r}$.
Pour tout $\hat{r}\in\hat{\SI}$, on définit alors 
   \[  s_{\hat{r}} (x): =\sum \frac{s^{(n)} (\hat{r})}{n!}  x^{n} . \]
 Par continuité des dérivées $s^{(n)}$, i$ s_{\hat{r}} (x)$ s'étend à  $\hat{\SI}$ et  
 \[ {\sf Z}(s_{\hat{r}} ) = \text{les zeros de }s_{\hat{r}} \text{ en } \R=  \lim  {\sf Z}(s_{r} ) = \lim \mathfrak{a}_{r} =\mathfrak{a}_{\hat{r}} .\] 
Si $s^{(\infty)}_{\mathfrak{a}}(\hat{r}_{1})=s^{(\infty)}_{\mathfrak{a}}(\hat{r}_{2})$,  on a l'égalité des fonctions $s_{\hat{r}_{1}} = s_{\hat{r}_{2}}$, qui implique que
$  \mathfrak{a}_{\hat{r}_{1}} =   \mathfrak{a}_{\hat{r}_{2}}  $,
  d'où \ $\hat{r}_{1}=\hat{r}_{2}$.

 \end{proof}
 
Pour obtenir un authentique analogue de la théorie de Drinfeld basée sur $e^{(\infty)}_{\mathfrak{a}}$, il resterait à résoudre les points suivants: 
 \begin{itemize}
\item[A.] Déterminer en quel sens l'application $e^{(\infty)}_{\mathfrak{a}}$ est un homomorphisme~;
\item[B.] Trouver une classe $\mathcal{C}$ de polynômes additifs tels que, pour chaque $\upalpha\in A$, il existe $P_{\upalpha}(X)\in\mathcal{C}$ et
un diagramme commutatif
 \[ \begin{diagram}
  \hat{\SI}_{\mathfrak{a}} & \rTo^{\cdot\upalpha} & \hat{\SI}_{\mathfrak{a}} \\
 \dTo^{  e^{(\infty)}_{\mathfrak{a}} } & &\dTo_{e^{(\infty)}_{\mathfrak{a}}} \\
 \C^{\infty} &\rTo_{P_{\upalpha}} & \C^{\infty}
 \end{diagram}\]

\end{itemize}

Nous nous arrêterons provisoirement ici, réservant l'élucidation de ces deux points à une future étude.

\section{Appendice : Continuité de la conjugaison galoisienne}

Soit $\Upomega\subset\R$ un quasicristal de dimension 1.  Notons 
\[   \Upomega_{R}:= \Upomega\cap B_{R}(0) \]  
où $B_{R}(0)=[-R,R]$ est la boule de rayon $R$ centrée à $0$. 
Une suite de translations $\{ \upalpha_{i} +\Upomega\}$, $\upalpha_{i}\in\Upomega$, converge dans la topologie quasicristalline si pour tout $R>0$, il existe $N_{R}>0$ tel que pour tous $i,j>N_{R}$,
\[   (\Upomega-\upalpha_{i})_{R}=  (\Upomega-\upalpha_{j})_{R} . \]
La réunion
\[  \Upomega_{\hat{\upalpha}}:=\bigcup_{ \substack{R>0 \\ i>N_{R}}}  (\Upomega-\upalpha_{i})_{R}    \]
est un quasicristal.  La complétion quasicristalline $ \hat{\Upomega} $ est l'ensemble de tels $ \Upomega_{\hat{\upalpha}}$, muni de la topologie quasicristalline définie par la base d'ouverts 
\[ \mathcal{O}_{R}( \Upomega_{\hat{\upalpha}}) = \{ \Upomega'\in \hat{\Upomega}\;:\;\; \Upomega_{R}'=( \Upomega_{\hat{\upalpha}})_{R}\},\quad R>0.\]
Comme on l'a vu au \S \ref{solenoid}, $\hat{\Upomega}$ est une transversale fermée du solenoïde $\hat{\SI}_{\Upomega}$~; c'est donc un espace de Stone (un compact totalement discontinu).

Désormais on suppose que $\Upomega $ est un ensemble modèle de dimension 1, basé sur le réseau $\mathcal{O}_{K}\subset \R^{d}$
avec la fenêtre de paramètre ${\bf x}\in(\R_{+})^{d-1}$
\[ W=W_{{\bf x}}= (-\uptheta^{-x_{1}} , \uptheta^{-x_{1}} ) \times\cdots \times(-\uptheta^{-x_{d-1}} , \uptheta^{-x_{d-1}} )  \]
où
\[  \overline{W}=\overline{W}_{\bf x} =[-\uptheta^{-x_{1}} , \uptheta^{-x_{1}} ] \times\cdots \times [-\uptheta^{-x_{d-1}} , \uptheta^{-x_{d-1}} ] . \]
Notons que si $\uptheta^{-x_{i}} \not\in \mathcal{O}_{K} $ pour $i=1,\dots ,d-1$, $W$ et $\overline{W}$ définissent  le même ensemble modèle.  Sinon,  on obtient un nombre fini
de nouveaux points d'ensemble modèle en utilisant $\overline{W}$ au lieu de $W$.



On dit que $\Upomega$  is  {\em répétitif} si pour tout $R>0$, l'ensemble des $R$-periodes 
\[   {\rm Per}_{R}(\Upomega ) := \{ x\in \R: \; (\Upomega-x)_{R} = \Upomega_{R} \} \]
est relativement dense.

\begin{lemm}\label{replemma} $\Upomega$ est répétitif si et seulement si il est défini en utilisant la fenêtre ouverte $W$.  
\end{lemm}

\begin{proof}[Démonstration]  Supposons que $\Upomega$ se définit à l'aide de $W$.  \'Ecrivons
\begin{align}\label{completelistinofOmega} \Upomega_{R}=\pm \{ 0=\upalpha_{0} <\upalpha_{1} < \cdots < \upalpha_{k} \} \end{align} 
et pour chaque $i=1,\dots ,k$, 
\[  \boldsymbol \upalpha'_{i} :=  ( \upsigma_{1}(\upalpha_{i} ), \dots , \upsigma_{d-1}(\upalpha_{i} )  ) \] 
où $ \upsigma_{1},\dots , \upsigma_{d-1}$ sont les automorphismes galoisiens non triviaux de $K/\Q$.
En particulier, 
\begin{enumerate}
\item[1.] D'après les definitions de $\Upomega$ et $\Upomega_{R}$, pour tout $i=0,\dots ,k$, $ \boldsymbol \upalpha'_{i}\in W$ et $\upalpha_{i}\in (0,R)$.
\item[2.] Puisque  (\ref{completelistinofOmega}) fournit une liste complète des éléments de $\Upomega_{R}$, il n'existe pas $0<\upgamma\in \mathcal{O}_{K}$  telle qu'il existe
 $i$ avec
\[ i\leq k-1  \text{ et } \upgamma < \upalpha_{i+1} -\upalpha_{i}\quad \text{ou}\quad i=k\text{ et }  \upgamma <R-\upalpha_{k} \]
et
 \[  \pm( \upalpha_{i}+\upgamma)\in \Upomega. \] 
\end{enumerate}
De cette dernière propiété on tire que pour tous $i<k$ et $\upgamma< \upalpha_{i+1}-\upalpha_{i}$, 
\[ \pm \boldsymbol\upgamma ' \not\in W\pm  \boldsymbol \upalpha'_{i} 
\]
tandis que pour $i=k$ et $\upgamma< R-\upalpha_{k}$, on a de même 
\[ \pm \boldsymbol\upgamma ' \not\in W\pm  \boldsymbol \upalpha'_{k} .
\]
Considérons l'ensemble
\[ \Upsilon   = \big\{0\not= \upgamma \in\mathcal{O}_{K} \; : \;\;  \upgamma< \upalpha_{i+1}-\upalpha_{i}\text{ pour un certain }i<k \text{ or }  \upgamma< R-\upalpha_{k} \big\}\subset (0,\upalpha),\]
où \[ \upalpha:=\max\big\{ \max_{i<k}(\upalpha_{i+1}-\upalpha_{i}), R-\upalpha_{k}\big\} .\]
L'ensemble des conjugués $\Upsilon'$ est un sous-ensemble d'un ensemble modèle dans $\{ 0\}\times\R^{d-1}$, défini en utilisant la fenêtre $ (0,\upalpha)$ en $\R\times\{0\}$~; $\Upsilon'$ est donc uniformément 
discret et l'on peut choisir 
$\updelta>0$ suffisamment petit tel que, pour tout $\upbeta $ avec $\|\boldsymbol\upbeta' \|<\updelta$,
\begin{enumerate}
\item[a.] $\boldsymbol \upbeta'\pm\boldsymbol\upalpha_{i}' \in  W$~: c'est-à-dire, $\upbeta \pm \upalpha_{i}\in\Upomega$ et 
\item[b.] Pour $\upgamma\in\Upsilon$ et tout $i$ pour lequel soit  $\upgamma< \upalpha_{i+1}-\upalpha_{i}$, soit $i=k$, $\upgamma< R-\upalpha_{k}$
\[ \boldsymbol \upgamma' \not\in W \pm\boldsymbol\upalpha'_{i} -\boldsymbol\upbeta'. 
\]  En particulier, \begin{align} \label{thenoninclusion}\upbeta \pm \upalpha_{i}+\upgamma\not\in \Upomega\end{align}
 pour $\upgamma < \upalpha_{i+1}-\upalpha_{i}$  dans les cas  $i\leq k-1$, où $\upgamma<R-\upalpha_{k}$ dans le cas $i=k$.
\end{enumerate}
La condition a.\ implique que $\pm\upalpha_{i} = (\pm\upalpha_{i} +\upbeta)-\upbeta\in (\Upomega-\upbeta)_{R}$, qui fournit l'inclusion $\Upomega_{R}\subset (\Upomega-\upbeta)_{R}$.  La condition b.\
donne l'inclusion inverse~: en effet, supposons qu'il existe  \[ x=\upalpha-\upbeta\in  (\Upomega-\upbeta)_{R}\setminus \Upomega_{R},\quad \upalpha\in\Upomega.\]
On peut supposer sans perte de généralité que $x>0$~; le cas $x<0$ peut être réglé en utilisant un argument  identique.    De plus, $x$ doit se trouver dans les intervalles de $(0,R)$ délimités par les $\upalpha_{i}$. 
Alors, il existe $\upgamma>0$ tel que  $\upgamma<\upalpha_{i+1}-\upalpha_{i}$ ou $\upgamma<R-\upalpha_{k}$ avec 
\[  x = \upalpha -\upbeta =\upalpha_{i}+\upgamma.\]
Mais
\[ \upalpha= \upbeta + \upalpha_{i}+\upgamma\in\Upomega ,\]
en contradiction avec (\ref{thenoninclusion}).
On en conclut que  $(\Upomega-\upbeta)_{R} =\Upomega_{R}$, et donc $\upbeta\in {\rm Per}_{R}(\Upomega )$.  L'ensemble des $\upbeta$ avec $\|\boldsymbol\upbeta'\|<\updelta$ est un quasicristal, 
il est donc relativement dense, et ainsi  ${\rm Per}_{R}(\Upomega )$
est également relativement dense, ce qui démontre que $\Upomega$ est répétitif.  

Si par contre $\Upomega$ est défini à l'aide $\overline{W}$ et n'est pas définissable avec $W$, il existe un ensemble fini de  $\upalpha\in\Upomega$ avec $\boldsymbol \upalpha'\in \partial \overline{W}$.  Le sous ensemble $\Upomega_{R}\subset\Upomega$ pour $R>|\upalpha|$
ne peut se manifester ailleurs dans $\Upomega$~: autrement dit il n'existe pas de $\upbeta$ avec $(\Upomega-\upbeta)_{R}=  \Upomega_{R}$. En particulier, pour un tel $R$,  $  {\rm Per}_{R}(\Upomega ) = \{0\}$ et donc
$\Upomega$ n'est pas répétitif.
\end{proof}

\begin{coro}\label{minimalcoro} Si $\Upomega$ est defini à l'aide de $W=W_{\bf x}$, $\hat{\Upomega}$ est minimal~: pour tout $\Upomega'\in \hat{\Upomega}$, $\hat{\Upomega}'=\hat{\Upomega}$.  
\end{coro}

\begin{proof} Le quasicristal de dimension 1 $\Upomega$ peut être décrit comme un mot bi-infini sur un alphabet fini, où chaque lettre de l'alphabet indexe une différence d'éléments consécutifs 
de  $\Upomega$.  Pour $\Upomega$ un ensemble modèle défini avec $W$, d'après le Lemme \ref{replemma}, le mot bi-infini qui indexe $\Upomega$ est  répétitif. La minimalité
de $\hat{\Upomega}$ suit de la Proposition 4.3 (p. 78) de \cite{BG}.
\end{proof}

\begin{coro} Si $\Upomega$ est défini à l'aide de
\begin{enumerate}
\item[1.] $W$,  alors $\hat{\Upomega}$ est un ensemble de Cantor~;  
\item[2.] si c'est par $\overline{W}$ et qu'il n'est pas définissable par $W$, alors $\hat{\Upomega}$ est un espace de Stone mais non un ensemble de Cantor.  L'ensemble $\Upomega\subset\hat{\Upomega}$
est dense et tous ses points sont isolés~; $ \hat{\Upomega}_{0}:=\hat{\Upomega}\setminus\Upomega$ est un ensemble de Cantor  homéomorphe à $\hat{\Upomega}'$ pour tout $\Upomega'\in \hat{\Upomega}\setminus \Upomega$.
\end{enumerate}
\end{coro}

\begin{proof}[Démonstration] 1.\ est conséquence du Corollaire \ref{minimalcoro}, puisque ce dernier  montre que chaque point est un un point d'accumulation, et donc $\hat{\Upomega}$ est parfait.  Quant à 2., 
puisque $\Upomega$ n'est pas répétitif, pour n'importe quel $\upalpha\in\Upomega$, il ne peut  exister une suite  $\upalpha_{i} +\Upomega$ qui converge vers $\upalpha+\Upomega$. Les points de $\Upomega$ sont donc isolés.
Mais si l'on quitte $\Upomega\subset\hat{\Upomega}$, les quasicristaux restant ne contiennent aucun translaté d'un morceau de $\Upomega_{R}$, $R>|\upalpha |$, pour
tout $\upalpha$ tel que $\upalpha'\in \partial \overline{W}$.  
Les quasicristaux $\Upomega'\in  \hat{\Upomega}_{0}$ sont donc minimaux et leurs completions coïncident avec $ \hat{\Upomega}_{0}$.  On en conclut que $ \hat{\Upomega}_{0}$ est parfait et donc est un ensemble de Cantor.
\end{proof}

\begin{theo}\label{conjconvergence} Si $\{ \upalpha_{i} +\Upomega\}$ converge dans la topologie quasicristalline, la suite des conjuguées $\{ \upalpha'_{i}\}$ converge dans $\R$.  Autrement dit,
l'application
\[ \Upomega \longrightarrow W, \quad  \upalpha +\Upomega\longrightarrow \upalpha'\]
s'étend en une application continue $\hat{\Upomega}_{\rm qc}\rightarrow \overline{W}$.
\end{theo}

\begin{proof}  Supposons d'abord que $\Upomega$ est défini par $W$, donc est répétitif.  Soit
\[ \Upomega_{\hat{\upalpha}} := \lim  \left( \upalpha_{i} +\Upomega\right) \in\hat{\Upomega}.\]   Puisque $\upalpha_{i} +\Upomega\subset\mathcal{O}_{K}$ pour tout $i$,  $\Upomega_{\hat{\upalpha}}\subset \mathcal{O}_{K}$.  D'après le Lemma 4.1 de \cite{Schlottmann}, 
\[   \bigcap_{\upomega\in \Upomega_{\hat{\upalpha}}} ( \upomega'-\overline{W}) = \{ c_{\hat{\upalpha}}\}  \]
pour un $c_{\hat{\upalpha}}\in \R$.  Notons alors que pour $\Upomega_{\upalpha}:=\upalpha +\Upomega\in \hat{\Upomega}$, puisque $\upalpha' \in \upomega'- W$ pour tout $\upomega\in \upalpha +\Upomega$, 
$c_{\upalpha} = \upalpha'\in W$.   Alors l'application
\[ \hat{\Upomega}\longrightarrow \R,\quad  \Upomega_{\hat{\upalpha}}\longmapsto c_{\hat{\upalpha}}  \]
étend la conjugaison et il reste à montrer que application est continue~: il s'en suivra alors  que $c_{\hat{\upalpha}}\in \overline{W}$. 
Soit $V'\ni c_{\hat{\upalpha}}$ un voisinage ouvert~; on a 
\[   \bigcap_{\upomega\in \Upomega_{\hat{\upalpha}}}  [( \upomega'-\overline{W}) \setminus V'  ] =\emptyset   .\]
Puisque chaque $ [( \upomega'-\overline{W}) \setminus V'  ] $ est compact, il existe un ensemble fini $F\subset\Upomega_{\hat{\upalpha}}$ tel que
\[  \bigcap_{\upomega\in F}  [( \upomega'-\overline{W}) \setminus V'  ] =\emptyset . \] 
Ceci montre qu'il existe $R>0$ tel que
\[   \bigcap_{\upomega\in (\Upomega_{\hat{\upalpha}})_{R}}  [( \upomega'-\overline{W}) \setminus V'  ] =\emptyset  , \]
ce qui implique  que \[  \bigcap_{\upomega\in(\Upomega_{\hat{\upalpha}})_{R}}  ( \upomega'-\overline{W}) \subset V' .\]
Donc, étant donné $\Upomega_{\hat{\upbeta}}\in\hat{\Upomega}$ avec
\[   (\Upomega_{\hat{\upbeta}})_{R} =  (\Upomega_{\hat{\upalpha}})_{R}  , \] on a 
 $c_{\hat{\upbeta}} \in V'$, ce qui démontre le théorème lorsque $\Upomega$  se définit à l'aide de $W$.  Si $\Upomega$ se définit par $\overline{W}$, il n'est pas
répétitif, mais l'argument ci-dessus montre que la conjugaison restreinte à l'ensemble de Cantor $\hat{\Upomega}_{0} = \hat{\Upomega}\setminus\Upomega$ est continue. Puisque les points de $\Upomega$
sout isolés dans $\hat{\Upomega}$, il s'ensuit que  la conjugaison est continue sur tout $\hat{\Upomega}$.
\end{proof}


\begin{thebibliography}{00}
\bibitem [1]{BG} Baake, Michael \& Grimm Uwe, {\it Aperiodic Order. Volume 1: A Mathematical Invitation}. Encyclopedia of Mathematics and Its Applications {\bf 149}. Cambridge University Press, Cambridge, 2013.
\bibitem [2]{BBG} Bellissard, Jean; Benedetti, Ricardo \& Gambaudo, Jean-Marc. Spaces of tilings, finite telescopic approximations and gap-labeling. {\it Commun. Math. Phys.} {\bf 261} 1--41 (2006).
\bibitem[3]{BertinZaimi} Bertin, Marie Jos\'{e} \& Za\"{\i}mi, Toufik, Complex Pisot numbers in algebraic number fields. {\it C.R. Acad. Sci. Paris}, Ser. I {\bf 353} (2015), 965--967.
\bibitem [4]{Cassels} Cassels, J.W.S., {\it An Introduction to Diophantine Approximation}.  Cambridge
Tracts in Mathematics and Mathematical Physics, {\bf 45}. Cambridge University Press, New York, 1957.
\bibitem [5]{Ge-C}  Casta\~{n}o Bernard, C. \& Gendron, T.M., Modular invariant of quantum tori.   {\it Proc. Lond. Math. Soc.} {\bf 109} (2014), Issue 4, 1014--1049.
\bibitem [6]{Connes} Connes, A., An essay on the Riemann hypothesis, arXiv 1509.05576.
\bibitem [7]{DiZh} Diamond, Harold G. \& Zhang, Wen-Bin (Cheung Man Ping), {\it Beurling Generalized Numbers}, Mathematical Surveys and Monographs {\bf 213}, AMS, Providence, 2016.
\bibitem [8] {DGI} Demangos, L. \& Gendron, T.M., Quantum $j$-Invariant in Positive Characteristic I: Definitions and Convergence. Arch. Math. {\bf 107} (1), 23--35 (2016).
\bibitem [9] {DGII} Demangos, L. \& Gendron, T.M., Quantum $j$-Invariant in Positive Characteristic II: Formulas and Values at the Quadratics. Arch. Math. {\bf 107} (2), 159--166 (2016).
\bibitem [10] {DGIII} Demangos, L. \& Gendron, T.M., Quantum Drinfeld Modules I:  (2017) arXiv:1607.03027.
\bibitem [11] {DGIV} Demangos, L. \& Gendron, T.M., Quantum Drinfeld Modules II:  Ray Class Fields. (2017) arXiv:1709.05337
\bibitem [12]{Deninger} Deninger, C., Some analogies between number theory and dynamical systems on foliated spaces.  {\it Documenta Mathematica}, Extra Volume ICM I (1998), 163--186.
\bibitem [13]{Drinfeld} Drinfeld, V.G., Elliptic modules. Math Sbornik {\bf 94} (1974), 594--627. 
\bibitem [14]{Drinfeld2} Drinfeld, V.G., Langlands conjecture for ${\rm GL}(2)$ over function field. {\it Proc. of Int. Congress
of Math. (Helsinki, 1978)}, Acad. Sci. Fennica (1988), 28--43.
\bibitem [15]{Gek} Gekeler,E.-U.,  Zur Arithmetik von Drinfeld Moduln, Math. Ann. {\bf 262} (1983), 167--182. 
\bibitem [16]{Goss} Goss, D., {\it Basic structures of Function Field Arithmetic}, Springer-Verlag, Berlin, 1998.
\bibitem [17]{Hayes} Hayes, D., A brief introduction to Drinfeld modules, in {\it The Arithmetic of Function Fields} (ed. D. Goss, D.R. Hayes \& M.I. Rosen),
Ohio State U. Mathematical Research Institute Publications {\bf 2}, pp. 313--402, Walter de Gruyter, Berlin, 1992.
\bibitem [18]{Hof} Hof, Uniform distribution and the projection method. in {\it Quasicrystals and Discrete Geometry} (J. Patera, ed.), Fields Institute Monographs {\bf 10}, AMS, Providence, 1998.
\bibitem [19]{Las} Lascar, Daniel, {\it La Th\'{e}orie des Mod\`{e}les en Peu de Maux}, Cassini, Paris, 2009.
\bibitem [20]{Man} Manin, Yu., Real multiplication and noncommutative geometry, in {\it The Legacy of Niels Henrik Abel}, pp. 685--727,
Springer-Verlag, New York, 2004.
\bibitem[21]{Meyer1} Meyer, Yves, {\it Nombres de Pisot, Nombres de Salem et Analyse Harmonique}, LNM {\bf 117}, Springer-Verlag, Berlin, 1970.
\bibitem[22]{Meyer2} Meyer, Yves, {\it Algebraic Numbers and Harmonic Analysis}, North-Holland, 1972.
\bibitem[23]{Meyer} Meyer, Yves, Quasicrystals, Diophantine approximation and algebraic numbers. dans {\it Beyond Quasicrystals}, pages 3--16, Springer-Verlag, Berlin, 1995.
\bibitem[24]{MV} Montgomery, Hugh L. \& Vaughan, Robert C., {\it Multiplicative Number Theory I. Classical Theory}. Cambridge studies in advanced mathematics {\bf 97}.  Cambridge U. Press, Cambridge, 2007.
\bibitem[25]{Moody} Moody, R., Meyer sets and their duals. in ``The Mathematics of Long-Range Order (Waterloo, ON, 1995)''.  NATO Advanced Science Institutes Series C: Mathematical
and Physical Sciences, {\bf 489}, pp. 403--441, Kluwer Academic Publishers, Dordrecht, 1997.
\bibitem[26]{Nark} Narkiewicz, W., {\em Elementary and Analytic Theory of Algebraic Numbers}, 3rd Edition. Springer Monographs in Mathematics, Springer-Verlag, Berlin, 2004.
\bibitem [27]{Pink} Pink, R., discussion privée avec le premier auteur, le 21 octobre 2017.
\bibitem[28]{Pisot} Pisot, Charles, La r\'{e}partition modulo 1 et nombres alg\'{e}briques.  {\it Ann. Sc. Norm. Super. Pisa}, II, Ser. 7 (1938), 205--248.
\bibitem[29]{Rosen} Rosen, M.I., The Hilbert class field in function fields.  {\it Expo. Math} {\bf 5} (1987), 365--378.
\bibitem[30]{Salem} Salem, Rapha\"{e}l, {\it Algebraic numbers and Fourier analysis}. Heath mathematical monographs, Boston, 1963.
\bibitem[31]{Schapp} Schappacher, N., On the history of Hilbert's 12th problem. A comedy of errors. {\it S\'{e}minaires et Congr\`{e}} {\bf 3}, Soci\'{e}t\'{e}
Math\'{e}matique de France, 1998, 243--273.
\bibitem[32]{Schlottmann}  Schlottmann, Martin, Generalized model sets and dynamical systems. {\it CRM Monogr.\ Ser.} {\bf 13},
American Mathematical Society, Providence, RI, 2000, 143--159.
\bibitem[33]{Ser} Serre, J.-P., Complex multiplication, in {\it Algebraic Number Theory} (ed. J.W.S. Cassels and A. Fr\"{o}lich), 2nd edition, pp. 293--296, London Mathematical Society, 2010.
\bibitem [34]{Sil} Silverman, {\it Advanced Topics in the Arithmetic of Elliptic Curves.} 
Graduate Texts in Mathematics {\bf 151}. Springer-Verlag, New York, 1994.
\bibitem [35]{Thakur} Thakur, D.S., {\it Function Field Arithmetic}, World Scientific, Singapore, 2004.
\bibitem [36]{Weil} Weil, A., {\it Basic Number Theory}, Springer-Verlag, Berlin.
\bibitem [37]{Weil1} Weil, A., On the Riemann hypothesis in function-fields.  {\it Oeuvres Scientifiques}, Volumes I, II \& III, 2nd Edition, Springer-Verlag, Berlin, 1980.
\bibitem [38]{Weil2} Weil, Numbers of solutions of equations in finite fields.  {\it Oeuvres Scientifiques}, Volumes I, II \& III, 2nd Edition, Springer-Verlag, Berlin, 1980. 
\end{thebibliography}
\end{document}